\theoremstyle{thmstyleone}%
\newtheorem{theorem}{Theorem}
\newtheorem{proposition}[theorem]{Proposition}%
\theoremstyle{thmstyletwo}%
\newtheorem{example}{Example}%
\newtheorem{remark}{Remark}
\newtheorem{lemma}{Lemma}
\newtheorem{corollary}{Corollary}
\theoremstyle{thmstylethree}%
\newtheorem{definition}{Definition}%
\newcommand{\N}{\mathbb{N}}
\newcommand{\R}{\mathbb{R}}
\newcommand{\mP}{\mathbb{P}}
\newcommand{\cH}{\mathcal{H}}
\newcommand{\cF}{\mathcal{F}}
\newcommand{\cE}{\mathcal{E}}
\newcommand{\cK}{\mathcal{K}}
\newcommand{\cC}{\mathcal{C}}
\newcommand{\cO}{\mathcal{O}}
\newcommand{\cS}{\mathcal{S}}
\newcommand{\cL}{\mathcal{L}}
\newcommand{\id}{\hbox{id}}
\newcommand{\eps}{\varepsilon}
\renewcommand{\phi}{\varphi}
\newcommand{\rT}{\mathrm {T}}
\numberwithin{equation}{section}
\begin{document}

\title[Genericity of Lyapunov spectrum of  bounded random compact operators on  infinite-dimensional Hilbert spaces]{Genericity of Lyapunov spectrum of  bounded random compact operators on  infinite-dimensional Hilbert spaces}


\author*{\fnm{Thai Son} \sur{Doan}}\email{dtson@math.ac.vn}

\affil*{\orgdiv{Institute of Mathematics}, \orgname{Vietnam Academy of Science and Technology}, \orgaddress{\street{18 Hoang Quoc Viet}, \city{Ha Noi}, \country{Viet Nam}}}


\abstract{This paper is devoted to studying the stability of Lyapunov exponents and the simplicity of Lyapunov spectrum for bounded random compact operators on a separable infinite-dimensional Hilbert space from a generic point of view generated by the essential supremum norm. Firstly, we show the density of both the set of bounded random compact operators having a finite number of Lyapunov exponents and the set of bounded random compact operators having a countably infinite number of Lyapunov exponents. Meanwhile, the set of bounded random compact operators having no Lyapunov exponent is nowhere dense. Finally, for any $k\in\N$, we show that the set of bounded random compact operators satisfying the condition that the Lyapunov spectra corresponding to their first $k$ Lyapunov exponents are simple and continuous contains an open and dense set.}

\keywords{Random dynamical systems, Multiplicative ergodic theorem, Lyapunov exponent, Stability, Simplicity, Dominated splitting, Compact random operators, Krein-Rutman theorem}



\maketitle

\tableofcontents

\section{Introduction}
The fundamental results on Lyapunov exponents for random dynamical systems on finite-dimensional spaces were first obtained in \cite{Oseledets}, which is now called the Oseledets Multiplicative Ergodic Theorem, see also  \cite{Arnold, Filip}. An extension of this theorem to random dynamical systems on infinite-dimensional Hilbert spaces was established in \cite{Ruelle_1982}. After that, there have been many attempts to develop the theory of Lyapunov exponents for random dynamical systems on a Banach space. Remarkable results in \cite{LianLu2010,Cecilia,Cecilia15} provided a complete result on multiplicative ergodic theorem of random dynamical systems in infinite-dimensional Banach spaces.

Since the appearance of the multiplicative ergodic theorem in \cite{Oseledets}, exploring properties of Lyapunov exponents of random dynamical systems has become one of the central tasks in the theory of random dynamical systems. In this task, understanding the stability of Lyapunov exponents under a small perturbation and the simplicity of Lyapunov spectrum from a generic point of view have been received a lot of interests. These interests come from the role of Lyapunov spectrum in many branches of the qualitative theory of random dynamical systems, such as invariant manifolds theory \cite{Arnold,LianLu2010,LiLuBates14}, linearization theory \cite{LuZhangZhang_2020,Wanner,Zhao}, normal form theory \cite{LiLu05a,LiLu2016,LiLu2008,LiLu05} and bifurcation theory \cite{Doan18,Crauel1998,Engel2019}. 

In this paper,  the stability of Lyapunov exponents and the genericity of Lyapunov spectrum for linear random dynamical systems equipped with the essential supremum norm (  $L^{\infty}$-topology) are investigated. Concerning some literature on other types of stability of Lyapunov exponents, we refer the readers to \cite{Arbieto,BessaVilarinho} for $L^p$ stability, to \cite{Ledrappier,Bogenschutz,Froyland,Froyland19} for stochastic stability. A comprehensive survey on the problem of stability of Lyapunov exponent can be found in \cite{Viana2014,Viana2020}.

\emph{Brief literature on stability of Lyapunov exponent and genericity of Lyapunov spectrum of linear random dynamical systems on finite-dimensional spaces}:  

In general, Lyapunov exponents do not depend continuously on the coefficients, see \cite{Knill}. Later, it was proved in \cite[Theorem C]{Bochi2002} that there is a residual set of $\mbox{SL}(2,\mathbb R)$-cocycles equipped with $C^0$-topology which either are uniformly hyperbolic (having a dominated splitting) or have zero exponents. An extension of this result to arbitrary dimensions was established in \cite{BochiViana2005}.   Nowadays, this result is known as the Ma\~{n}e-Bochi Theorem, see \cite[Section 9.2]{Viana2014}. Several extensions of Ma\~{n}e-Bochi Theorem to continuous-time systems were established in \cite{Bessa2006,Bessa2008a}.  

By generalizing  Millionshchikov's rotation technique, see e.g. \cite{Millionshchikov}, the author in \cite{Cong05} showed the openness and denseness of integral separation of bounded linear random dynamical systems equipped with the $L^{\infty}$-topology. Hence, in the   $L^{\infty}$-topology, the alternative of having zero Lyapunov exponents in Ma\~{n}e-Bochi's Theorem can be dropped. Consequently, simplicity of Lyapunov spectrum is a generic property on the space of bounded linear random dynamical systems. A version of this result for continuous-time systems was established in \cite{Cong_Doan16}.

By showing that an integrally separated linear random dynamical system preserves a suitable
cone under a long enough iteration, the author in \cite{Doan17} combined the work in  \cite{Cong05} and a Perron-Frobenius theorem for linear random dynamical systems  \cite{Ruelle1979,Lian15,MierczynskiShen_01,MierczynskiShen_02,Rugh} to show that the analyticity of Lyapunov exponent is still a generic property.

\emph{Existing literature and the contribution on stability  of Lyapunov exponent and simplicity of Lyapunov spectrum of random compact operators on infinite-dimensional spaces}: 

In contrast to various known results about simplicity and continuity of Lyapunov exponents for random dynamical systems on a finite-dimensional space, only a partial result was developed for the same problems but on an infinite-dimensional Hilbert space. We mentioned here the result obtained in \cite{Bessa2008,Bessa2019}, which can be considered as a generalization of Ma\~{n}e-Bochi's Theorem. Concretely, the authors in  \cite{Bessa2008,Bessa2019} showed that generically either product of random compact operators converges to the null operators or the corresponding Oseledets-Ruelle decomposition is dominated. Our first contribution in this paper is to show that the set of products of random compact operators converging to the null operators is nowhere dense. The main ingredient in the proof of this result is a development of Millionshchikov’s rotation technique on infinite-dimensional Hilbert spaces.  Concerning the remaining possibility of the number of Lyapunov exponents of random compact operators, we show that the set of bounded random compact operators having finite Lyapunov exponents and all of them are simple is dense, and the set of bounded random compact operators having infinite countable numbers of Lyapunov exponents is also dense.

Regarding the aspect of continuity and higher regularity of Lyapunov exponents, our second contribution of the paper is to show the existence of an open and dense set of the space of products of bounded random compact operators such that their first finite number of Lyapunov exponents are simple and depend analytically on the generators. The first ingredient in the proof of this result is a generalization of the Perron–Frobenius theorem and the Krein-Rutman theorem for random compact operators established in \cite{Dubois,Ruelle1979}. The second ingredient of the proof is the density of integral separation for random compact operators. 

The paper is organized as follows: In Section \ref{Section2}, we recall the multiplicative ergodic theorem for non-invertible random compact operators and state the main results of the paper. Section \ref{Prepartoryresults} is devoted to presenting two preparatory results for the proof of the main theorems. The first result is about a generalization of the Krein-Rutman theorem for positive random compact operators (Proposition \ref{GeneralizedKrein-Rutman} in Subsection \ref{Subsection_3.1}). The second result is to present the way to perturb a random compact operator possessing a random unit vector with good growth rate to one satisfying the assumption of the Krein-Rutman theorem (Proposition \ref{TechnicalProposition_C} in Subsection \ref{Subsection_3.1}). The third result is to present a way to perturb a random compact operator converging to the null operator to the new one possessing an invariant unit random vector (Proposition \ref{TechnicalProposition_1} in Subsection \ref{Subsection3.2}). Proof of the main results is given in Section \ref{Section_ProofA} and Section \ref{Section_ProofB}. Several fundamental materials on exterior power, measurable selection theorem of random subspaces, and Halmos-Rokhlin's lemma are collected in the Appendix.  
\section{Preliminaries and the statement of the main results}\label{Section2}
Let $\cH$ be an infinite-dimensional separable Hilbert space. Denote by $\cK(\cH)$ the space of compact operators from $\cH$ into itself. Let $(\Omega,\cF,\mP)$ be a non-atomic Lebesgue probability space and $\theta:\Omega\rightarrow \Omega$ an invertible ergodic transformation preserving the probability $\mP$.  Denote by $\mathcal K_{\infty}(\Omega;\mathcal H)$ the space of strongly measurable and essentially bounded maps\footnote{The map $T:\Omega\rightarrow \cK(\cH)$ is called \emph{strongly measurable} if for any $x\in\cH$ the map from $\Omega$ to $\cH$ defined by  $\omega\mapsto  T(\omega) x$ is measurable, see e.g. \cite{LianLu2010}}  $T:\Omega\rightarrow \mathcal K(\mathcal H)$ endowed with the $L^{\infty}$-norm
\[
\|T\|_{\infty}:=\mbox{ess}\sup_{\omega\in\Omega} \|T(\omega)\|.
\]
Then, the space $(\mathcal K_{\infty}(\Omega;\mathcal H), \|\cdot\|_{\infty})$ is a Banach space and each $T\in \mathcal K_{\infty}(\Omega;\mathcal H)$ gives rise to an one-sided linear random dynamical system $T:\N\times \Omega\rightarrow  \mathcal K(\mathcal H)$ via
\begin{equation}\label{GeneratedLinearRDS}
T(n,\omega)
:=
\left\{
  \begin{array}{ll}
    \id , & \hbox{if } n=0, \\[1.5ex]
    T(\theta^{n-1}\omega)\dots T(\omega), & \hbox{else.}
  \end{array}
\right.
\end{equation}
For short, we use the notation $T_{\omega}^n$ instead of $T(n,\omega)$ and we also identify each random compact operator $T$ with its generated linear random dynamical system $T^n_{\omega}$. Thanks to work in \cite{Ruelle_1982,Cecilia,Cecilia15}, the following multiplicative ergodic theorem provides the asymptotic behavior of random compact operators over an invertible ergodic system. Note that these random compact operators do not need to be assumed to be invertible. 
\begin{theorem}[Multiplicative ergodic theorem for non-invertible random compact  operators]\label{MET_HilbertSpace} Let $T\in \mathcal K_{\infty}(\Omega;\mathcal H)$. Then, there exists a measurable set $	\widehat \Omega$ being of full probability and invariant under $\theta$ such that the following statements hold:

\noindent 
(i) \textbf{Lyapunov exponents}: For every $\omega\in\widehat \Omega$, the limit $\lim_{n\to\infty} \big((T^n_{\omega})^{\rT} T^n_{\omega}\big)^{\frac{1}{2n}}=\Lambda_{\omega}(T)$ exists and is a compact operator. Let $e^{\lambda_1(T)}>e^{\lambda_2(T)}>\dots$ denote the nonzero eigenvalues of $\Lambda_{\omega}(T)$. Then, 
$\lambda_k(T)$ are real and independent of $\omega$ and might terminate at $\lambda_{s(T)}(T)$; otherwise we write $s(T)=\infty$  and in this case $\lim_{k\to\infty}\lambda_k(T)=-\infty$. 

\noindent (ii) \textbf{Oseledets-Ruelle decomposition}: Let $F^{\infty}_{\omega}(T)$ be the null space of $\Lambda_{\omega}(T)$, if it exists. Then, $
	\lim_{n\to\infty}\frac{1}{n}\log\|T^n_{\omega}|_{F_{\omega}^{\infty}(T)}\|=
		-\infty$. There exists an equivariant decomposition 
	\[
	\mathcal H=  \bigoplus_{i=1}^{s(T)} \mathcal O^i_{\omega}(T) \oplus F^{\infty}_{\omega}(T),
	\]
where the Oseledets-Ruelle subspaces $\mathcal O^i_{\omega}(T), i=1,\dots,s,$ are of finite dimension and equivariant, i.e. $T(\omega) \mathcal O^i_{\omega}(T)\subset \mathcal O^i_{\theta\omega}(T)$, and
	\[
	\lim_{n\to\infty}\frac{1}{n}\log\|T^n_{\omega}v\|=\lambda_i\qquad\hbox{for } v\in\mathcal O^i_{\omega}(T)\setminus\{0\}.
	\]
\end{theorem}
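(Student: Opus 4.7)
The plan is to follow the classical Raghunathan--Ruelle strategy adapted to infinite-dimensional compact cocycles: produce the Lyapunov exponents as subadditive limits via Kingman's theorem, extract the Oseledets--Ruelle decomposition from the spectral theorem for the limiting compact self-adjoint operator $\Lambda_\omega$, and finally upgrade to equivariance and measurability. Concretely, for each $k \ge 1$ I would apply Kingman's subadditive ergodic theorem to the cocycle $n \mapsto \log \|\wedge^k T^n_\omega\|$ on the exterior power $\wedge^k \cH$ (discussed in the Appendix). Submultiplicativity under $\wedge^k$ yields subadditivity, and the bound $\|\wedge^k T^n_\omega\| \le \|T\|_\infty^{kn}$ secures $L^1$-integrability; the resulting a.s.\ limits $\Lambda^{(k)}(T) \in [-\infty,\infty)$ produce the Lyapunov exponents as successive differences $\lambda_k(T) = \Lambda^{(k)}(T) - \Lambda^{(k-1)}(T)$ whenever these are finite.

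The core step is showing that $A_n(\omega) := \big((T^n_\omega)^{\rT} T^n_\omega\big)^{1/(2n)}$ converges in operator norm to a compact positive self-adjoint $\Lambda_\omega$. Here I would follow the argument in \cite{Ruelle_1982}: the eigenvalues of $A_n$ are $1/n$-th powers of the singular values of $T^n_\omega$, so the previous step pins down their asymptotics, and the associated spectral projectors stabilize thanks to uniform angle estimates between well-separated clusters of singular values. Compactness of $T(\omega)$ forces $T^n_\omega$ to be compact for every $n \ge 1$, hence its singular values decay, and this decay propagates to $\Lambda_\omega$ being compact with finite-dimensional eigenspaces $U^i_\omega$ attached to each positive eigenvalue $e^{\lambda_i(T)}$. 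The spectral theorem then supplies $\cH = \bigoplus_i U^i_\omega \oplus \ker \Lambda_\omega$. I would set $\mathcal{O}^i_\omega(T) := U^i_\omega$ and $F^\infty_\omega(T) := \ker \Lambda_\omega$, and verify the asymptotic growth $\lim \frac{1}{n}\log\|T^n_\omega v\| = \lambda_i$ on $\mathcal{O}^i_\omega(T)\setminus\{0\}$ (and $-\infty$ on $F^\infty_\omega(T)$) by expanding $v$ in the spectral resolution of $A_n$ and invoking the operator-norm convergence.

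Equivariance is then essentially automatic: if $v \in \mathcal{O}^i_\omega(T)$, then $T(\omega)v$ grows at rate $\lambda_i$ under $T^n_{\theta\omega}$, hence lies in $\mathcal{O}^i_{\theta\omega}(T)$ by the growth characterization, after discarding a $\theta$-invariant null set and restricting to $\widehat\Omega$. Measurability of $\omega \mapsto \mathcal{O}^i_\omega(T)$ and of $\omega \mapsto F^\infty_\omega(T)$ follows from the measurable selection theorem for random subspaces recalled in the Appendix, combined with measurability of $\omega \mapsto A_n(\omega)$ and its convergence to $\Lambda_\omega$.

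The main obstacle I expect is the operator-norm convergence of $A_n$: the infinite-dimensional setting rules out Raghunathan's finite-dimensional compactness trick and one must obtain quantitative control of the tail of the spectrum of $A_n$ via uniform compactness of the cocycle. A secondary subtlety is the non-invertibility of $T(\omega)$: one cannot construct Oseledets subspaces by pulling back from the future as in the invertible Oseledets theorem, so every object must be anchored on the forward limit $\Lambda_\omega$, and the equivariance $T(\omega)\mathcal{O}^i_\omega(T) \subset \mathcal{O}^i_{\theta\omega}(T)$ is the best available substitute for the two-sided invariance enjoyed in the invertible case.
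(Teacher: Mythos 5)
This is a background theorem which the paper does not prove; it is cited directly from Ruelle (1982) and the semi-invertible works of Gonz\'alez-Tokman--Quas listed as \cite{Ruelle_1982,Cecilia,Cecilia15}, so there is no in-paper argument to compare against. Judging your sketch against what those sources actually do, the Kingman plus exterior-power route to the Lyapunov exponents, and the Ruelle-style norm convergence of $A_n(\omega)=\big((T^n_\omega)^{\rT}T^n_\omega\big)^{1/(2n)}$ to a compact positive operator $\Lambda_\omega$, are both sound and are indeed the way the filtration is obtained.

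However, there is a genuine gap in the step where you set $\mathcal{O}^i_\omega(T):=U^i_\omega$, the eigenspace of $\Lambda_\omega$ at $e^{\lambda_i}$, and declare equivariance ``essentially automatic.'' The eigenspaces of $\Lambda_\omega$ are generically not $T$-equivariant. What $\Lambda_\omega$ actually furnishes is the slow filtration $V^i_\omega:=\bigoplus_{j\ge i}U^j_\omega\oplus\ker\Lambda_\omega$, and the growth characterization reads: $v\in V^i_\omega\setminus V^{i+1}_\omega$ iff $\frac{1}{n}\log\|T^n_\omega v\|\to\lambda_i$. So if $v\in U^i_\omega\setminus\{0\}$, knowing that $T(\omega)v$ also has asymptotic rate $\lambda_i$ under the cocycle over $\theta\omega$ only places $T(\omega)v$ in the (non-linear) level set $V^i_{\theta\omega}\setminus V^{i+1}_{\theta\omega}$, not in the linear subspace $U^i_{\theta\omega}$. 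Already the constant $2\times 2$ example $T=\left(\begin{smallmatrix}2&1\\0&1\end{smallmatrix}\right)$ exhibits this: the top eigenspace of $\Lambda=\lim\big((T^n)^{\rT}T^n\big)^{1/(2n)}$ is spanned by $(1,1)^{\rT}$ and is not $T$-invariant, whereas the true Oseledets space at exponent $\log 2$ is $\mathrm{span}\{(1,0)^{\rT}\}$. To obtain the genuine equivariant splitting stated in part (ii) when $T(\omega)$ is non-invertible, one must exploit invertibility of $\theta$ and construct $\mathcal{O}^i_\omega(T)$ by pushing forward the fast subspaces $\bigoplus_{j\le i}U^j_{\theta^{-n}\omega}$ under $T^n_{\theta^{-n}\omega}$ and passing to the limit $n\to\infty$ in the appropriate Grassmannian topology; the existence and measurability of this limit is precisely the extra content of the semi-invertible multiplicative ergodic theorem \cite{Cecilia,Cecilia15}. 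Without this step, your proposal proves the filtration version of the theorem, not the splitting version claimed.
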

According to the above theorem, three possibilities for the number of Lyapunov exponents of a random compact operator are finite numbers, infinitely countable numbers, or no number, see also \cite{LianLu2010}. Our first result is to show the density of the set of random compact operators having a finite number of Lyapunov exponents and all of them are simple, i.e., the Oseledets-Ruelle subspaces corresponding to all finite Lyapunov exponents are of dimension one.  Next, we also show the density of the set of random compact operators having infinitely many Lyapunov exponents. Concerning the remaining possibility, we show the nowhere density of random compact operators having no finite Lyapunov exponents. These results are summarized in the following theorem.\\

\noindent
\textbf{Theorem A (Genericity on the number of Lyapunov exponents).}  Define the following subsets of $ \mathcal K_{\infty}(\Omega;\mathcal H)$ 
\begin{align}
\mathcal S
& :=\Big\{T\in \mathcal K_{\infty}(\Omega;\mathcal H): s(T)<\infty\; \& \dim \mathcal O^k_{\omega}(T)=1\hbox{ for } k=1,\dots, s(T) \Big\}, \label{SimpleLE}\\[1ex]
\mathcal I
& :=\Big\{T\in \mathcal K_{\infty}(\Omega;\mathcal H): s(T)=\infty\Big\}\label{InfinteLE},\\[1ex]
\mathcal N
&:=\big\{ T\in \mathcal K_{\infty}(\Omega;\mathcal H): \Lambda_{\omega}(T)=0\quad \hbox{for } \mP-\hbox{a.e. } \omega\in\Omega\big\}.\label{Nullset}
\end{align}
Then, the following statements hold
\begin{itemize}
    \item [(i)] The set $\mathcal S$ is dense in $(\mathcal K_{\infty}(\Omega;\mathcal H),\|\cdot\|_{\infty})$.
    \item [(ii)] The set $\mathcal I$ is dense in $(\mathcal K_{\infty}(\Omega;\mathcal H),\|\cdot\|_{\infty})$.
    \item [(iii)] The set $\mathcal N$ is nowhere dense in $\mathcal K_{\infty}(\Omega;\mathcal H)$.
\end{itemize}
Next, we are interested in the simplicity and the continuity of the Lyapunov exponents of bounded random compact operators. More concretely, for each $k\in\N$  we show the genericity of the simplicity and analyticity of the first $k$ Lyapunov exponents.\\ 

\noindent
\textbf{Theorem B (Genericity on simplicity and analyticity of Lyapunov exponents).} For each $k\in\N$, there exists an open and dense set $\mathcal G_k\subset \mathcal K_{\infty}(\Omega;\mathcal H)$ such that for all $T\in \mathcal G_k$ we have $s(T)\geq k$ and the following statements hold
\begin{itemize}
\item [(i)] The Oseledets-Ruelle subspaces corresponding to the first $k$ Lyapunov exponent of $T$ are simple, i.e.
\[
\dim \mathcal O^1_{\omega}(T)=\dots=\dim \mathcal O^k_{\omega}(T)=1.
\]
\item [(ii)] For each $i=1,\dots,k$, the map $\lambda_i:\mathcal G_k\rightarrow \R, T\mapsto \lambda_i(T)$ is analytic.
\end{itemize}

\section{Preparatory results}\label{Prepartoryresults}
\subsection{Krein-Rutman theorem for random compact operators}\label{Subsection_3.1}
A subset $\mathcal C\subset \cH$ is called a \emph{proper cone} if $tv\in\mathcal C$ for all $t\geq 0, v\in\mathcal C$ and $\mathcal C\cap (-\mathcal C)=\{0\}$. For a closed proper convex cone $\mathcal C$, the dual cone $\mathcal C^{'}$ is defined by
\begin{equation}\label{Dualcone}
\mathcal C^{'}:=\{u\in\cH: \langle u,v\rangle\geq 0\quad\hbox{for all } v\in\mathcal C \}.
\end{equation}
A family of closed proper convex cones  $(\mathcal C_{\omega})_{\omega\in \Omega}$ is assumed to satisfies an additional condition about the existence of unit interior points of these cones and their dual cones:
\begin{itemize}
  \item [(C)] There are measurable mappings $c,c':\Omega\rightarrow \cH$ such that $\|c(\omega)\|=\|c'(\omega)\|=1$ and $B(c(\omega),r) \subset \mathcal C_{\omega}$, $B(c'(\omega),r) \subset \mathcal C'_{\omega}$
 with $r$ is independent of $\omega$.
\end{itemize}
In the sequel, we collect examples of a family of closed proper convex cones $(\cC_{\omega})_{\omega\in\Omega}$ satisfying condition (C) used throughout the paper.
\begin{example}\label{ConeExample}
(i) Let $(\cC_{\omega})_{\omega\in\Omega}$ be a family of closed proper convex cones satisfying condition (C).  Denote by $\cL(\cH)$ the space of bounded operators from $\cH$ into itself. Let $P:\Omega\rightarrow \cL(\cH)$ be a strongly measurable map such that $P(\cdot)$ is essentially bounded. Define 
\[
\widetilde{\cC}_{\omega}=P(\omega)\cC_{\omega}\quad \hbox{for } \omega\in\Omega.
\]
Then, $(\widetilde{\cC}_{\omega})_{\omega\in\Omega}$ is also a family of closed proper convex cones satisfying condition (C).

\noindent
(ii) Let $e:\omega\rightarrow \cH$ be a random unit variable. For each $\omega\in\Omega$, let 
\begin{equation}\label{Randomcone_Example}
\cC_{\omega}:=\left\{v\in \cH:\langle e(\omega),v\rangle \geq\frac{\|v\|}{2}\right\}.
\end{equation}
Obviously,  $(\cC_{\omega})_{\omega\in\Omega}$ is a family of proper closed convex cones. We show that $(\cC_{\omega})_{\omega\in\Omega}$ also satisfies condition (C) with the interior unit vector $e(\omega)$ by verifying that $B(e(\omega),\frac{1}{6})\subset \cC_{\omega}$ and $B(e(\omega),\frac{1}{6})\subset \cC'_{\omega}$. Let $v\in B(e(\omega),\frac{1}{6})$ be arbitrary. Then, $v=e(\omega)+x$ for some $x\in \cH$ with $\|x\|\leq \frac{1}{6}$. Thus, $\|v\|\leq \frac{7}{6}$ and 
\[
\langle e(\omega),v\rangle=1+ \langle x,v\rangle\geq 1-\|x\|\|v\|\geq \frac{29}{36},
\]
which implies that $\langle e(\omega),v\rangle \geq\frac{\|v\|}{2}$ and therefore $B(e(\omega),\frac{1}{6})\subset \cC_{\omega}$. To show  $B(e(\omega),\frac{1}{6})\subset \cC'_{\omega}$, pick an arbitrary $u\in \cC_{\omega}$ with $\|u\|=1$. Then, 
\[
\langle v,u\rangle=\langle e(\omega),u\rangle+ \langle x,u\rangle 
\geq \frac{\|u\|}{2}-\|x\|\|u\|>0,
\]
which together with the definition of the dual cone $\cC'_{\omega}$ as in \eqref{Dualcone} implies that $v\in \cC'_{\omega}$. Consequently, $B(e(\omega),\frac{1}{6})\subset \cC'_{\omega}$.

\noindent 
(iii) Let $\cH=\cO(\omega) \oplus F(\omega)$ be a random decomposition of $\cH$ with $\dim \cO(\omega)=1$. Let $e(\omega)$ be the random unit vector of $\cO(\omega)$. Suppose that there exists $\eta\in (0,1)$ such that 
\begin{equation}\label{AngleSeparation}
\frac{|\langle u,e(\omega)\rangle|}{\|u\|}\leq \eta \qquad\hbox{ for all } u\in F(\omega)\setminus\{0\}. \end{equation}
The above condition means a uniform separation from zero of the angle between two vectors in $\cO(\omega)$  and $F(\omega)$. This condition also implies the following estimate on the decomposition of an arbitrary unit vector in $\cH$.  Let $\xi\in\cH$ be an arbitrary unit vector. Write $\xi=v+u$, where $v\in\cO(\omega)$ and $u\in F(\omega)$. Then, using \eqref{AngleSeparation} we obtain that 
\begin{equation}\label{AngleSeparation_Estimate01}
\|v\|, \|u\|\leq \frac{1}{\sqrt{1-\eta^2}}.     
\end{equation}
For each $\omega\in\Omega$, let 
\[
\cC_{\omega}:=\left\{ u+\gamma e(\omega): u\in F(\omega)\quad \hbox{ with } \quad  \|u\|\leq \frac{\gamma}{2} \right\}.
 \]
Obviously, $(\mathcal C_{\omega})_{\omega\in\Omega}$ is a family of closed proper convex cones. We are proving that  $(\mathcal C_{\omega})_{\omega\in \Omega}$ satisfies condition (C) with the unit interior point $c(\omega)=c'(\omega):=e(\omega)$. To see this, let
\begin{equation}\label{Definitionofinteriorvector}
r:=\frac{\sqrt{1-\eta^2}}{3 }.   
\end{equation}
We verify that $B(e(\omega),r)\subset \cC_{\omega}$ and $B(e(\omega),r)\subset \cC'_{\omega}$. For this purpose, choose and fix an arbitrary $\beta\in (0,r)$ and $\xi\in \cH$ with $\|\xi\|=1$. Write $\xi=v+u$ with $v\in\cO(\omega)$ and $u\in F(\omega)$. Thus, 
\[
e(\omega)+\beta \xi=
\beta u+ (e(\omega)+\beta v).
\]
Note that $e(\omega)+\beta v$ is either $(1+\beta\|v\|) e(\omega)$ or $(1-\beta\|v\|)e(\omega)$. In both cases, by \eqref{AngleSeparation_Estimate01}, \eqref{Definitionofinteriorvector} and $\beta \in (0,r)$ we have \[
\|\beta u\| \leq \frac{r}{\sqrt{1-\eta^2}}
\leq \frac{1-\frac{r}{\sqrt{1-\eta^2}}}{2}
\leq \frac{1-\beta\|v\|}{2},
\]
which implies that  $e(\omega)+\beta \xi\in \cC_{\omega}$ and therefore $B(e(\omega),r)\subset \cC_{\omega}$. To see $e(\omega)+\beta \xi\in \cC'_{\omega}$, let $u+\gamma e(\omega)\in \cC_{\omega}$ with $u\in F(\omega)$ and $\|u\|\leq \frac{\gamma}{2}$. Then, by \eqref{Definitionofinteriorvector}
\begin{align*}
\langle e(\omega)+\beta \xi, u+\gamma e(\omega)\rangle
& \geq 
\gamma (1+ \beta \langle \xi,e(\omega)\rangle)
-\|u\|\|e(\omega)+\beta \xi\|\\[1ex]
&\geq 
\gamma (1-r)-\frac{\gamma}{2}(1+r)\geq 0,
\end{align*}
which indicates that $e(\omega)+\beta \xi\in \cC'_{\omega}$. Thus, $B(e(\omega),r)\subset \cC'_{\omega}$.
\end{example}
In what follows, we recall the result from \cite{Dubois} and its slight improvement, see \cite[Theorem 8]{Doan17}, about simplicity and analyticity of the top exponent for a random compact operator having an invariant random cone. To state this result, recall that the \emph{top Lyapunov exponent function} $\kappa: \cK_{\infty}(\Omega;\mathcal H)\rightarrow \R\cup\{-\infty\}$ is defined as 
\begin{equation}\label{TopLyapunovExponent}
\kappa(T):=\lim_{n\to\infty}\frac{1}{n}\log\|T_{\omega}^n\|\qquad\hbox{for}\quad  \mP-\hbox{a.e. } \omega\in\Omega.
\end{equation}
The \emph{Hilbert metric} $d_{\mathcal C}$ on $\left(\mathcal C\cup -\mathcal C\right)\setminus\{0\}$ is defined by
\begin{equation}\label{HilbertMetric}
d_{\mathcal C}(u,v):=\log (\beta_{\mathcal C}(u,v))+\log (\beta_{\mathcal C}(v,u))\qquad\hbox{for } u,v\in \left(\mathcal C\cup -\mathcal C\right)\setminus\{0\},
\end{equation}
with
\begin{equation}\label{Distancebeta}
\beta_{\mathcal C}(u,v):=\inf\{t>0: t|u|-|v|\in\mathcal C\},
\end{equation}
where for any $w\in (\cC\cup -\cC)$ we let $|w|:=
\left\{
\begin{array}{ll}
    w & \hbox{ if } w\in \cC\setminus\{0\}, \\[1ex]
    -w &  \hbox{ if } w\in -\cC \setminus\{0\},
\end{array}
\right.
$.
\begin{theorem}[Krein-Rutman theorem for random compact  operators]\label{Krein-Rutman}
Consider a random compact operator $T\in \mathcal K_{\infty}(\Omega;\mathcal H)$. Let  $(\mathcal C_{\omega})_{\omega\in \Omega}$ be a family of closed proper convex cones satisfying the condition (C). Suppose that $T(\omega)\mathcal C_{\omega}\subset \left(\mathcal C_{\theta \omega}\cup -\mathcal C_{\theta \omega}\right)$ and there exists $R\in (0,\infty)$ such that
\begin{equation}\label{Interiorcone_condition}
d_{\mathcal C_{\theta\omega}}(T(\omega)v, c(\theta\omega))\leq R\qquad\hbox{for } \omega\in \Omega, v\in \mathcal C_{\omega}\setminus\{0\}.
\end{equation}
Then, the following statements hold:
\begin{itemize}
\item [(i)] The Oseledets-Ruelle subspace of $T$ corresponding to the top Lyapunov exponent $\kappa(T)$ is of dimension one,
\item [(ii)] The top Lyapunov exponent function $\kappa(\cdot)$ is analytic at $T$.
\end{itemize}
\end{theorem}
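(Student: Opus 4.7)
The plan is to combine Birkhoff's classical contraction theorem for positive linear maps on cones in the Hilbert metric with the multiplicative ergodic machinery already quoted as Theorem \ref{MET_HilbertSpace}. The core observation is that condition \eqref{Interiorcone_condition} uniformly bounds the $d_{\mathcal{C}_{\theta\omega}}$-diameter of $T(\omega)(\mathcal{C}_\omega\setminus\{0\})$ by $2R$, so Birkhoff's theorem yields a contraction factor $\tau:=\tanh(R/2)<1$, uniform in $\omega$, for $T(\omega)$ viewed as a map of the projective cones.

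For (i), I would build an equivariant unit random vector $v(\omega)\in\mathcal{C}_\omega$ by pulling back the interior section $c(\omega)$ supplied by condition (C). Set $v_n(\omega):=T_{\theta^{-n}\omega}^n c(\theta^{-n}\omega)/\|T_{\theta^{-n}\omega}^n c(\theta^{-n}\omega)\|$. Uniform contraction gives $d_{\mathcal{C}_\omega}(v_n(\omega),v_m(\omega))\leq C\tau^{\min(n,m)}$; the uniform interior ball of radius $r$ around $c(\omega)$ lets one bound the norm metric by the Hilbert metric on $\mathcal{C}_\omega$, so $(v_n)$ is Cauchy and converges to a measurable equivariant unit section $v(\omega)\in\mathcal{C}_\omega$, with measurability supplied by the selection tools in the appendix. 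Uniqueness follows from the same contraction. To identify $\R v(\omega)$ with $\mathcal{O}^1_\omega(T)$, note that $c(\omega)$ is an interior point of the cone and hence has a nontrivial component along any Oseledets-Ruelle subspace that meets $\mathcal{C}_\omega$; combined with the fact that forward iterates from $\mathcal{C}_\omega$ are asymptotically attracted to $\R v(\omega)$ in the Hilbert metric, this forces $\mathcal{O}^1_\omega(T)=\R v(\omega)$ and hence $\dim\mathcal{O}^1_\omega(T)=1$.

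For (ii), the plan is to perturb. Small $L^\infty$-perturbations $S$ of $T$ still map $\mathcal{C}_\omega$ into a slightly enlarged but still uniformly bounded Hilbert-diameter region of $\mathcal{C}_{\theta\omega}\cup -\mathcal{C}_{\theta\omega}$; hence uniform contraction with some factor $\tau'<1$ persists on an open $L^\infty$-neighborhood $\mathcal U$ of $T$. On $\mathcal U$ the equivariant section $v_S(\omega)$ is obtained as the unique fixed point of a uniformly contracting analytic family of operators acting on a closed bounded subset of measurable sections of the cones, so the analytic implicit function / contraction mapping theorem produces an analytic map $S\mapsto v_S$. The top Lyapunov exponent is then recovered from Birkhoff's ergodic theorem as $\kappa(S)=\int_\Omega \log\|S(\omega)v_S(\omega)\|\,d\mP(\omega)$; analyticity of $S\mapsto S(\omega)v_S(\omega)$ together with a uniform integrable majorant on $\mathcal U$ (coming from $\|S\|_\infty$ and a lower bound on $\|S(\omega)v_S(\omega)\|$ inherited from the interior-ball condition) gives analyticity of $\kappa$ at $T$.

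The main obstacle I expect is the precise translation between the Hilbert metric $d_{\mathcal{C}_\omega}$ and the ambient Hilbert-space norm in the random framework: condition (C) supplies the uniform interior ball one needs to make this comparison effective, but all constants must be made uniform in $\omega$ and all selections $v_n(\omega),v(\omega),v_S(\omega)$ must be kept jointly measurable, which is where the measurable selection results and Krein-Rutman-type lemmas referenced from \cite{Dubois} and \cite[Theorem 8]{Doan17} enter. A subtler point is in (ii): one must verify that the fixed-point equation defining $v_S$ is analytic in $S$ in the Fréchet sense on the Banach space $\mathcal K_\infty(\Omega;\mathcal H)$ (not merely smooth) and that differentiation under the integral sign defining $\kappa$ is justified on a genuine complex neighborhood of $T$.
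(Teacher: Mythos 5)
The paper does not actually prove Theorem \ref{Krein-Rutman}: the text immediately preceding it explicitly recalls the statement from \cite{Dubois} and its refinement \cite[Theorem 8]{Doan17}, so there is no internal proof to compare against. Your sketch is the Birkhoff cone-contraction argument that those references are built on, so at the level of strategy you are reconstructing the approach the paper imports rather than offering an alternative.

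That said, a few places in the sketch are underargued in ways that matter. The invariance hypothesis only gives $T(\omega)\mathcal C_\omega\subset\mathcal C_{\theta\omega}\cup(-\mathcal C_{\theta\omega})$, not $\mathcal C_\omega\to\mathcal C_{\theta\omega}$, so when you invoke Birkhoff's theorem you need to check that the contraction estimate survives the possible sign flip; this is precisely why $d_{\mathcal C}$ is defined in \eqref{HilbertMetric} on $(\mathcal C\cup-\mathcal C)\setminus\{0\}$, and you should say so explicitly rather than treat $T(\omega)$ as cone-preserving. More seriously, ``this forces $\mathcal O^1_\omega(T)=\R v(\omega)$'' is the crux of (i) and is glossed over. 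Projective alignment of forward cone orbits with $v(\theta^n\omega)$ does not by itself rule out $\dim\mathcal O^1_\omega(T)\ge2$: one can perturb $c(\omega)$ inside the uniform interior ball supplied by (C) in two linearly independent directions of $\mathcal O^1_\omega(T)$, and the Hilbert-metric alignment of their iterates is compatible with both having top exponent. What closes the gap is the quantitative spectral gap: the uniform Birkhoff contraction factor $\tau<1$, translated from Hilbert metric to norm via the interior ball, gives $\|T^n_\omega u\|/\|T^n_\omega v(\omega)\|\lesssim\tau^n$ for $u$ transverse to $v(\omega)$ inside the cone, hence a second exponent at most $\kappa(T)+\log\tau<\kappa(T)$. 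That estimate should be the centrepiece of (i), not a remark in the ``obstacles'' paragraph. Finally, for (ii) the identity $\kappa(S)=\int\log\|S(\omega)v_S(\omega)\|\,d\mathbb P$ needs both equivariance of $v_S$ and the fact (again from the interior-ball property) that $v_S$ realizes the top exponent; and a real contraction-mapping argument yields only Lipschitz dependence of $v_S$ on $S$, not analyticity --- genuine analyticity is exactly what the complex Perron--Frobenius framework of \cite{Dubois} is for, and your last paragraph correctly identifies but does not resolve this point.
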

\begin{remark}\label{Explaination_interiorcone}
 Condition \eqref{Interiorcone_condition} implies that $T(\omega)$ maps the cone $\cC_{\omega}$ into the interior of $\cC_{\theta\omega}$. For an easier task in checking this condition later, condition \eqref{Interiorcone_condition} is replaced by a slightly stronger condition
\begin{equation}\label{Strongercondition}
\beta_{\mathcal C_{\theta\omega}}(T(\omega)v, c(\theta\omega))< R \qquad\hbox{for } \omega\in \Omega, v\in \mathcal C_{\omega} \hbox{with } \|v\|=1,   
\end{equation}
where $R>0$ is a positive constant. To see that condition \eqref{Strongercondition} implies \eqref{Interiorcone_condition}, choose and fix  arbitrary $\omega\in \Omega, v\in \mathcal C_{\omega}\setminus\{0\}$ with  $\|v\|=1$. Using property (C) of cone $\cC_{\theta\omega}$ and the fact that $\left\|\frac{T(\omega)v}{\|T\|_{\infty}}\right\|\leq 1$, we obtain that \[
c(\theta\omega)-\frac{r}{2}\frac{T(\omega)v}{\|T\|_{\infty}}\in \cC_{\theta\omega},
\]
which implies that $\frac{2\|T\|_{\infty}}{r}c(\theta\omega)-T(\omega)v\in \cC_{\theta\omega}$. Thus, 
\begin{equation}\label{InteriorCone_02}
\log\beta_{\mathcal C_{\theta\omega}}(c(\theta\omega),T(\omega)v)
\leq 
\log \frac{2\|T\|_{\infty}}{r},
\end{equation}
which together with \eqref{Strongercondition} implies that 
\begin{align*}
d_{\cC_{\theta\omega}}
(T(\omega)v, c(\theta\omega))
&= \log \beta_{\mathcal C_{\theta\omega}}(c(\theta\omega),T(\omega)v) + \log (\beta_{\mathcal C_{\theta\omega}}(T(\omega)v), c(\theta\omega))\\
&< 
\log R+ \log  \frac{2\|T\|_{\infty}}{r}.
\end{align*}
It means that \eqref{Interiorcone_condition} holds with the positive constant $\log \frac{2R\|T\|_{\infty}}{r}$.
\end{remark}
In the following result, Theorem \ref{Krein-Rutman} is strengthened in two aspects. The first improved point is to replace the condition \eqref{Strongercondition} on invariance of the cone under one iteration of random maps by the condition on invariance of the cone under a finite iteration of random maps, see also \cite[Theorem 4]{Doan17} for a similar argument. The second improved point is that the conclusion of Theorem \ref{Krein-Rutman} is not only true for a random compact operator having an invariance cone but also true for nearby random compact operators.

\begin{proposition}[Generalized Krein-Rutman theorem for random compact operators]\label{GeneralizedKrein-Rutman}
 Consider a random compact operator $T\in \mathcal K_{\infty}(\Omega;\mathcal H)$. Let $(\mathcal C_{\omega})_{\omega\in\Omega}$ be a family of closed proper convex cones satisfying the condition (C). Suppose that $T$ satisfies the following assumptions 
 \begin{itemize}
     \item [(A1)] There exists a prime number $N$ such that 
     \[
     T^N_{\omega}\mathcal C_{\omega}\subset \left(\mathcal C_{\theta^N \omega} \cup - \mathcal C_{\theta^N \omega}\right)\qquad\hbox{ for } \omega\in\Omega.
     \]
     \item [(A2)] There exists a positive constant $R>0$ such that \begin{equation}\label{InvarianceCone_Condition}
\beta_{\mathcal C_{\theta^N\omega}}(T^N_{\omega}v, c(\theta^N\omega))< R\quad \hbox{ for } \omega\in\Omega, v\in \mathcal C_{\omega}\setminus\{0\} \hbox{ with } \|v\|=1.
\end{equation}
 \end{itemize}
 Then, there exists $\eps>0$ such that for all $S\in \mathcal K_{\infty}(\Omega;\mathcal H)$ with $\|S-T\|_{\infty}\leq \eps$ the following statements hold
\begin{itemize}
\item [(i)] The Oseledets-Ruelle subspace of $S$ corresponding to the top Lyapunov exponent $\kappa(S)$ is of dimensional one,
\item [(ii)] The top Lyapunov exponent function $\kappa(\cdot)$ is analytic at $S$.
\end{itemize}
\end{proposition}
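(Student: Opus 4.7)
The plan is to reduce the statement to the one-iterate Krein--Rutman theorem (Theorem~\ref{Krein-Rutman}) by passing to the $N$-th iterate $S^N$ viewed as a random compact operator over the base dynamical system $(\Omega,\theta^N,\mP)$. Since $\theta$ is ergodic and $N$ is prime, $\theta^N$ is either ergodic on $(\Omega,\mathcal F,\mP)$, or there is a $\theta^N$-invariant partition $\Omega=\bigsqcup_{i=0}^{N-1}\Omega_i$ with $\theta\Omega_i=\Omega_{i+1\bmod N}$, $\mP(\Omega_i)=\tfrac1N$, and $(\theta^N,N\mP|_{\Omega_i})$ ergodic on each $\Omega_i$. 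In the second case I restrict $S^N$, the cone family $(\mathcal C_\omega)$, and the measurable interior sections $c(\cdot),c'(\cdot)$ to $\Omega_0$ and work with the induced ergodic base $(\Omega_0,\theta^N,N\mP|_{\Omega_0})$.

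The heart of the argument is showing that the cone hypotheses (A1) and (A2) are robust under small $L^\infty$-perturbations of $T$. For $\|S-T\|_{\infty}\le\eps$, a telescoping estimate yields $\|S^N_\omega-T^N_\omega\|\le N(\|T\|_{\infty}+\eps)^{N-1}\eps$ uniformly in $\omega$. Combining this with the strict inequality in (A2) and the uniform interior-ball condition (C), I would show that, for $\eps$ small enough, the perturbed vector $S^N_\omega v$ remains in $\mathcal C_{\theta^N\omega}\cup-\mathcal C_{\theta^N\omega}$ for every unit $v\in\mathcal C_\omega$ and satisfies $\beta_{\mathcal C_{\theta^N\omega}}(S^N_\omega v,c(\theta^N\omega))<R'$ for some constant $R'>R$ independent of $\omega$ and $S$. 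With this in hand and Remark~\ref{Explaination_interiorcone} converting the strict $\beta$-bound into the Hilbert-metric hypothesis \eqref{Interiorcone_condition}, the operator $S^N$ satisfies the hypotheses of Theorem~\ref{Krein-Rutman} over the ergodic base, so its top Oseledets--Ruelle subspace $\mathcal O^1_\omega(S^N)$ is one-dimensional and the top Lyapunov exponent functional for operators over $(\Omega,\theta^N,\mP)$ is analytic at $S^N$.

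To translate back to $S$, I would use that the assignment $S\mapsto S^N$ is an $N$-linear polynomial, hence an analytic map $\mathcal K_\infty(\Omega;\mathcal H)\to\mathcal K_\infty(\Omega;\mathcal H)$, and that a direct computation from \eqref{TopLyapunovExponent} gives $\kappa_{\theta^N}(S^N)=N\kappa(S)$. Composing with the analytic functional above yields (ii), while (i) follows from $\mathcal O^1_\omega(S)=\mathcal O^1_\omega(S^N)$, which holds because both subspaces are characterised by the same exponential growth along the subsequence $(Nn)_{n\in\N}$; in the non-ergodic case the identification transports from $\Omega_0$ to the remaining $\Omega_i$ by the equivariance $S(\omega)\mathcal O^1_\omega(S)\subset\mathcal O^1_{\theta\omega}(S)$. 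The main obstacle is the perturbation stability claim, because the set $\mathcal C_{\theta^N\omega}\cup-\mathcal C_{\theta^N\omega}$ has $\{0\}$ on its boundary, so a priori a tiny perturbation could push $S^N_\omega v$ out of this set and render the $\beta$-distance undefined. Ruling this out requires using the strict inequality in (A2) together with the uniform interior ball of radius $r$ from (C) to obtain a $\omega$-uniform lower bound on the ``depth'' of $T^N_\omega v$ inside $\pm\mathcal C_{\theta^N\omega}$ that survives an $O(\eps)$ perturbation.
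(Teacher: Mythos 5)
Your proposal follows essentially the same route as the paper: pass to the $N$-th iterate over $(\Omega,\theta^N,\mP)$, handle the potential non-ergodicity of $\theta^N$ via primality of $N$, invoke Theorem~\ref{Krein-Rutman} together with Remark~\ref{Explaination_interiorcone} for the iterated cocycle, translate analyticity back through the analytic map $S\mapsto S^N$ and the scaling $\kappa^{(N)}(S^N)=N\kappa(S)$, and establish stability of the cone hypotheses (A1)--(A2) under $L^\infty$-small perturbations of $T$.

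There is, however, a genuine gap in the perturbation step, which is precisely the new ingredient of the proposition beyond Theorem~\ref{Krein-Rutman}. You correctly diagnose the difficulty (the boundary of $\mathcal C_{\theta^N\omega}\cup(-\mathcal C_{\theta^N\omega})$ contains $0$, so a small perturbation might push $S^N_\omega v$ out of this set and leave $\beta$ undefined) and correctly name the ingredients (the strict inequality in (A2) and the uniform interior ball from (C)), but you stop at ``I would show'' without supplying the estimate. The paper closes this by a dual-cone computation: from (A2), $R\,T^N_\omega v - c(\theta^N\omega)\in\mathcal C_{\theta^N\omega}$ for unit $v\in\mathcal C_\omega$, hence for any unit $u\in\mathcal C'_{\theta^N\omega}$
\[
\langle u, T^N_\omega v\rangle \geq \tfrac{1}{R}\,\langle u, c(\theta^N\omega)\rangle \geq \tfrac{r}{2R},
\]
where the second inequality uses (C) via $c(\theta^N\omega)-\tfrac{r}{2}u\in\mathcal C_{\theta^N\omega}$. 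This is exactly the $\omega$-uniform ``depth'' you allude to. With it, choosing $\eps$ so that $\|S^N_\omega-T^N_\omega\|\le\tfrac{r}{4R}$ forces $\langle u,S^N_\omega v\rangle\ge\tfrac{r}{4R}>0$ for all unit $u\in\mathcal C'_{\theta^N\omega}$, whence $S^N_\omega v\in\mathcal C_{\theta^N\omega}$ by bipolarity, and the same inner product gives $\beta_{\mathcal C_{\theta^N\omega}}(S^N_\omega v,c(\theta^N\omega))\le\tfrac{4R}{r}$. Without this argument your sketch does not close; with it, your approach and the paper's coincide.
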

\begin{proof}
Firstly, we show the assertions for the original random compact operator $T$ under an additional assumption that  $\theta^N$ is an ergodic transformation from $\Omega$ into itself. Define a map $\pi: \mathcal K_{\infty}(\Omega,\cH)\rightarrow \mathcal K_{\infty}(\Omega,\cH)$ by
\begin{equation}\label{DefinitionPi}
\pi(S)(\omega):=S(\theta^{N-1}\omega)\dots S(\omega).
\end{equation}
Note that $\pi$ is a product of bounded linear operators $\pi_i:\mathcal K_{\infty}(\Omega,\cH)\rightarrow \mathcal K_{\infty}(\Omega,\cH), S\mapsto \pi_i(S)$ with $\pi_i(S)(\omega):=S(\theta^i\omega)$ and $i=1,\dots, N-1$ and thus the function $\pi$ is analytic. For each $S\in \mathcal K_{\infty}(\Omega,\cH)$, by ergodicity of $\theta^N$ and Theorem \ref{MET_HilbertSpace}  the top Lyapunov exponent $\kappa^{(N)}(\pi(S))$ of  the compact operator  $\pi(S)\in \mathcal K_{\infty}(\Omega,\mathcal B)$ over $(\Omega,\cF,\mP,\theta^N)$ is given by 
\[
\kappa^{(N)}(\pi(S))
=
\lim_{n\to\infty}\frac{1}{n}\log \|\pi(S)(\theta^{(n-1)N}\omega)\dots\pi(S)(\theta^N\omega)\pi(S)(\omega)\|,
\]
this together with  \eqref{DefinitionPi} implies that
\begin{equation}\label{Relation_TopEx}
\lambda_1(S)=\frac{1}{N}\lambda_1^{(N)}(\pi(S)).
\end{equation}
By \eqref{InvarianceCone_Condition} the random compact operator $\pi(T)$ on $(\Omega,\cF,\mP,\theta^N)$ preserves the family of closed proper convex cones $(\cC_{\omega})_{\omega\in\Omega}$. Using Theorem \ref{Krein-Rutman} and Remark \ref{Explaination_interiorcone}, the function $\kappa^{(N)}(\cdot)$ is analytic at $\pi(T)$ and this together with \eqref{Relation_TopEx} and analyticity of the function $\pi$ implies that $\kappa(\cdot)$ is analytic at $T$. Additionally, by Theorem \ref{Krein-Rutman} and Remark \ref{Explaination_interiorcone} the dimension of the Oseledets-Ruelle subspace corresponding to the top Lyapunov exponent of $\pi(T)$ is $1$. Therefore, the  Oseledets-Ruelle subspace corresponding to the top Lyapunov exponent of $T$ is also  $1$. Hence, the conclusion of the proposition holds for $T$ in the case that $\theta^N$ is ergodic.

Now we deal with the case that $\theta^N$ is not ergodic. Let $W$ be a measurable set satisfying that
$\mP(W)\in (0,1)$ and  $W=\theta^N W$. Since $N$ is a prime number, there exists a measurable set  $\widehat W$ such that $\theta^N \widehat W=\widehat W$ and the restriction of the map $\theta^N$ on
 $\widehat W$ is ergodic, see \cite[Proof of Theorem 4]{Doan17}. Thus, using a similar statement as in the case that $\theta^N$ is ergodic on $\Omega$ to the induced random compact operator $T$ on $(\widehat W,\mathcal F,\mP,\theta^N)$ yields the assertions for $T$ in this case.

To conclude the proof, it is sufficient to show that there exists $\eps>0$ such that all $S\in \mathcal K_{\infty}(\Omega,\cH)$ with $\|S-T\|_{\infty}\leq \eps$ fulfill \eqref{InvarianceCone_Condition}, i.e. $S^N_{\omega}\mathcal C_{\omega}\subset \mathcal C_{\theta^N \omega}$ and there exists $K>0$ such that 
\begin{equation}\label{InvarianceCone_Condition_New}
\beta_{\mathcal C_{\theta^N\omega}}(S^N_{\omega}v, c(\theta^N\omega))\leq K\quad \hbox{ for } \omega\in\Omega, v\in \mathcal C_{\omega}\setminus\{0\} \hbox{ with } \|v\|=1.
\end{equation}
For this purpose, we show that   for all $v\in\cC_{\omega}$ with $\|v\|=1$ and $u\in\cC'_{\theta^N\omega}$ with $\|u\|=1$
\begin{equation}\label{InteriorCone_Aim}
\langle u, T^N_{\omega}v\rangle \geq \frac{r}{2R}\qquad\hbox{ for } \omega\in\Omega.   
\end{equation}
Choose and fix $\omega\in \Omega$ and $v\in \mathcal C_{\omega}$ with $\|v\|=1$ and $u\in\cC'_{\theta^N\omega}$ with $\|u\|=1$. By \eqref{InvarianceCone_Condition} we derive that $R\,T^N_{\omega}v-c(\theta^N\omega)\in \cC_{\theta^N\omega}$ and thus $\langle u,R\,T^N_{\omega}v-c(\theta^N\omega)\geq 0$. So,
\begin{equation}\label{Firstestimate_New}
 \langle u, T^N_{\omega}v\rangle\geq \frac{\langle u, c(\theta^N\omega)\rangle}{R}.
\end{equation}
By (C) and the fact that $\|\frac{r}{2}u\|=\frac{r}{2}$, we have $c(\theta^N\omega)-\frac{r}{2} u\in \cC_{\theta^N\omega}$. Thus, $\langle u,c(\theta^N\omega)-\frac{r}{2}u\rangle\geq 0$. This together with \eqref{Firstestimate_New} implies that $\langle u, T^N_{\omega}v\rangle\geq\frac{r}{2R}$, i.e. \eqref{InteriorCone_Aim} is proved. By continuity argument, there exists $\eps>0$ such that all $S\in \mathcal K_{\infty}(\Omega,\cH)$ with $\|S-T\|_{\infty}\leq \eps$ satisfy that 
\begin{equation}\label{ChoiceofEpsilon}
\|T^N_{\omega}-S^N_{\omega}\|\leq \frac{r}{4R}\quad \hbox{for all }\quad  \omega\in\Omega
\end{equation}
and we show that this choice of $\eps$ satisfies \eqref{InvarianceCone_Condition_New}. For this purpose, let $S\in \mathcal K_{\infty}(\Omega,\cH)$ be arbitrary but fixed with $\|S-T\|_{\infty}\leq \eps$. Let $\omega\in \Omega$ and $v\in \mathcal C_{\omega}$ with $\|v\|=1$. Then, for all $u\in \cC'_{\theta^N\omega}$ 
\[
\langle u,S^N_{\omega}v\rangle \geq 
\langle u,T^N_{\omega}v\rangle-\left|\langle u,(S^N_\omega-T^N_{\omega})v\rangle \right|\geq \frac{r}{2R}-\frac{r}{4R}>0.
\]
It means that $S^N_{\omega}v$ lies on the dual cone of $ \cC'_{\theta^N\omega}$ and therefore $S^N_{\omega}v\in \cC_{\theta^N\omega}$. Furthermore, for any $t\geq \frac{4R}{r}$ and 
$u\in \cC'_{\theta^N\omega}$ with $\|u\|=1$ we have 
\[
\langle u, t S^N_{\omega}v-c(\theta^N\omega)\rangle
=
\langle u, t\; T^N_{\omega}v\rangle + \langle u, t\; (S^N_{\omega}-T^N_{\omega})v\rangle - \langle u,c(\theta^N\omega)\rangle.
\]
This together with \eqref{InteriorCone_Aim} and \eqref{ChoiceofEpsilon} implies that 
\begin{align*}
\langle u, t S^N_{\omega}v-c(\theta^N\omega)\rangle
& \geq 
t \langle u, T^N_{\omega}v\rangle
-t \left|\langle u,  (S^N_{\omega}-T^N_{\omega})v\rangle\right|-\left| \langle u,c(\theta^N\omega)\rangle \right|\\
& \geq 
t\frac{r}{2R}- t \frac{r}{4R}-1\\
& \geq 0.
\end{align*}
Consequently, for all $t\geq \frac{4R}{r}$ we have 
$tS^N_{\omega}v-c(\theta^N\omega)\in \cC_{\theta^N\omega}$ and thus 
\[
\beta_{\cC_{\theta^N\omega}}(S^N_{\omega}v,c(\theta^N\omega)) \leq \frac{4R}{r}.
\]
Hence, \eqref{InvarianceCone_Condition_New} holds for $K=\frac{4R}{r}$ and the proof is complete.
\end{proof}
We conclude this preparatory subsection by establishing a result on how to perturb a  random compact operator possessing a random unit vector with good growth rate to one possessing an invariant family of closed proper convex cones satisfying all assumptions of Proposition \ref{GeneralizedKrein-Rutman}. Random compact operators possessing a random unit vector with good growth rate will appear later as arbitrarily small perturbation of random compact operators converging to the null operator, see Proposition \ref{Millionschikov}. It is also noted that linear random dynamical systems having one invariant direction with a good growth rate play an important role in proving the density of integrally separated bounded linear random dynamical systems, see \cite[Theorem 6]{Cong05}.
\begin{proposition}\label{TechnicalProposition_C}
 Let $T\in \mathcal K_{\infty}(\Omega;\mathcal H)$ and $e(\omega)$ a random unit vector of $\cH$. Suppose that $T(\omega)e(\omega)$ is collinear to $e(\theta\omega)$ and there exist  $N\in \N$ and $\rho,\alpha>0$ such that  \begin{equation}\label{dominateddirection}
\|T(\omega)e(\omega)\|\geq \rho,\; \|T_{\omega}^N e(\omega)\|\geq e^{-\alpha N}\|T_{\omega}^N\|\qquad\hbox{ for  }  \omega\in\Omega.   
\end{equation}
Define $S\in \mathcal K_{\infty}(\Omega;\mathcal H)$ by for each $\omega\in\Omega$ 
\begin{equation}\label{Perturbed_Operator}
S(\omega) v
=
\left\{
\begin{array}{ll}
   e^{2\alpha }\, T(\omega)e(\omega)  & \hbox{ if } v=e(\omega),  \\[1ex]
   T(\omega)v     &  \hbox{ if } v\in e(\omega)^{\perp}.
\end{array}
\right.
\end{equation}
Then, the following statements hold
\begin{itemize}
    \item [(i)] $\|S-T\|_{\infty}
    \leq (e^{2\alpha }-1)\|T\|_{\infty}$,
    \item [(ii)] The random compact operator $S$ satisfies all assumptions of Proposition \ref{GeneralizedKrein-Rutman}, i.e. there exist  a family of closed proper convex cones $(\mathcal C_{\omega})_{\omega\in \Omega}$ satisfying the condition (C),  a prime number $N\in\mathbb N$  such that $S^N_{\omega}\mathcal C_{\omega}\subset \mathcal C_{\theta^N \omega}$ and  a positive constant $R>0$ such that 
\[
\beta_{\mathcal C_{\theta^N\omega}}(S^N_{\omega}v, c(\theta^N\omega))< R\quad \hbox{ for } \omega\in\Omega, v\in \mathcal C_{\omega}\setminus\{0\} \hbox{ with } \|v\|=1.
\]
Consequently, the top Lyapunov exponent function $\kappa(\cdot)$ is analytic at $S$.
\end{itemize}
\end{proposition}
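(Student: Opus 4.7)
Part (i) is a direct rank-one computation. By the definition of $S$,
\[
(S(\omega)-T(\omega))v=(e^{2\alpha}-1)\langle v,e(\omega)\rangle\,T(\omega)e(\omega),
\]
so $\|S(\omega)-T(\omega)\|=(e^{2\alpha}-1)\|T(\omega)e(\omega)\|\leq(e^{2\alpha}-1)\|T(\omega)\|$; taking the essential supremum over $\omega$ yields the claim.

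For (ii), the plan is to construct a narrow convex cone around $e(\omega)$, verify $S^N_\omega$-invariance, and invoke Proposition~\ref{GeneralizedKrein-Rutman}. After a measurable sign choice for $e(\cdot)$ along $\theta^N$-orbits (which does not affect the hypotheses), I may assume $T^N_\omega e(\omega)=\|T^N_\omega e(\omega)\|e(\theta^N\omega)$. Following Example~\ref{ConeExample}(ii), for a small $\epsilon>0$ to be fixed I would set
\[
\cC_\omega:=\{v\in\cH:\ \langle e(\omega),v\rangle\geq(1-\epsilon)\|v\|\},\qquad c(\omega):=e(\omega),
\]
and verify that condition~(C) holds with an interior radius depending only on $\epsilon$.

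The analytic heart of the argument is the structural identity
\[
S^n_\omega v-T^n_\omega v\in\mathrm{span}(e(\theta^n\omega))\qquad\text{for every }n\geq 1\text{ and }v\in\cH,
\]
proved by induction: the base case uses the rank-one form of $S-T$ together with $T(\omega)e(\omega)\propto e(\theta\omega)$, and the inductive step uses that applying $S(\theta^n\omega)$ to $T^n_\omega v+\lambda e(\theta^n\omega)$ yields only further multiples of $e(\theta^{n+1}\omega)$ beyond $T^{n+1}_\omega v$. Writing $v=\gamma e(\omega)+u\in\cC_\omega$ with $u\perp e(\omega)$ and $\|v\|=1$ (so $\gamma\geq 1-\epsilon$ and $\|u\|\leq\sqrt{2\epsilon-\epsilon^2}$), and decomposing $S^N_\omega v=A\,e(\theta^N\omega)+W$ with $W\perp e(\theta^N\omega)$, the identity implies $W$ equals the orthogonal projection of $T^N_\omega u$ onto $e(\theta^N\omega)^\perp$, hence $\|W\|\leq\|T^N_\omega\|\|u\|$. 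Meanwhile
\[
A=\gamma e^{2\alpha N}\|T^N_\omega e(\omega)\|+\langle S^N_\omega u,e(\theta^N\omega)\rangle,
\]
whose main term is at least $(1-\epsilon)e^{\alpha N}\|T^N_\omega\|$ by~\eqref{dominateddirection}, while the correction is bounded by $\|S^N_\omega\|\sqrt{2\epsilon-\epsilon^2}\leq(e^{2\alpha}\|T\|_\infty)^N\sqrt{2\epsilon-\epsilon^2}$. Taking $N$ a sufficiently large prime and then $\epsilon>0$ sufficiently small (depending on $N,\alpha,\rho,\|T\|_\infty$) forces the main term to dominate, yielding $S^N_\omega v\in\cC_{\theta^N\omega}$ and, by a parallel computation, a uniform bound $\beta_{\cC_{\theta^N\omega}}(S^N_\omega v,c(\theta^N\omega))<R$. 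Proposition~\ref{GeneralizedKrein-Rutman} then applies and yields the analyticity of $\kappa(\cdot)$ at $S$.

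The main technical obstacle is the uniform-in-$\omega$ control of the correction $\langle S^N_\omega u,e(\theta^N\omega)\rangle$: the crude bound $\|S^N_\omega\|\leq(e^{2\alpha}\|T\|_\infty)^N$ can exceed $e^{\alpha N}\|T^N_\omega\|$ by exponentially large factors, forcing $\epsilon$ to be chosen exponentially small in $N$ to restore domination. A secondary subtlety is the sign-cohomology step used to normalize $e(\cdot)$ along $\theta^N$-orbits, which is where primality of $N$ is helpful.
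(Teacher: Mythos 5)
Your part (i) computation is correct and cleaner than the paper's.

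For part (ii) you take a genuinely different route. The paper conjugates $S$ by a diagonal operator $P_\eps$ (which shrinks the $e(\omega)$-direction by $\eps$), works with the conjugated operator $S_\eps$ and a \emph{fixed} cone $\cC_\omega=\{v:\langle e(\omega),v\rangle\geq\|v\|/2\}$, and controls $\|S_{\eps,\omega}^{n}v\|$ on $e(\omega)^\perp$ via Lemma~\ref{TechnicalLemma3}. You instead keep $S$ and use a cone of variable small opening $\cC_\omega=\{v:\langle e(\omega),v\rangle\geq(1-\epsilon)\|v\|\}$, with the structural identity $S^n_\omega v-T^n_\omega v\in\mathrm{span}(e(\theta^n\omega))$ doing the work that $P_\eps$-conjugation does in the paper. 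Geometrically these two routes are conjugate to each other (conjugating the $1/2$-cone by $P_\eps$ produces a narrow cone), but your version is more direct and is a valid alternative. Moreover, the ``main technical obstacle'' you flag is not actually an obstruction: with $v=\gamma e(\omega)+u$, $\|v\|=1$, $v\in\cC_\omega$, one has $\|u\|\leq\sqrt{2\epsilon-\epsilon^2}$, hence $\|W\|\leq\|T^N_\omega\|\sqrt{2\epsilon-\epsilon^2}$ and, once $\epsilon$ is taken small enough (depending on $N,\alpha,\rho,\|T\|_\infty$) to absorb the crude term $(e^{2\alpha}\|T\|_\infty)^N\sqrt{2\epsilon-\epsilon^2}$, $|A|\geq\tfrac12(1-\epsilon)e^{\alpha N}\|T^N_\omega\|$, so that $\|W\|/|A|\leq\frac{2\sqrt{2\epsilon-\epsilon^2}}{(1-\epsilon)e^{\alpha N}}$, while the cone criterion is $\|W\|/|A|\leq\frac{\sqrt{2\epsilon-\epsilon^2}}{1-\epsilon}$. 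The $\sqrt{\epsilon}$ factors cancel, and the requirement collapses to $e^{\alpha N}>2$, which you obtain by replacing $N$ with a suitable multiple $kN$ (the hypothesis $\|T^N_\omega e(\omega)\|\geq e^{-\alpha N}\|T^N_\omega\|$ passes to multiples by collinearity). This is exactly what the paper does.

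There is, however, one genuine gap. Your ``measurable sign choice for $e(\cdot)$ along $\theta^N$-orbits'' so that $T^N_\omega e(\omega)=\|T^N_\omega e(\omega)\|e(\theta^N\omega)$ is not available in general: if one writes $T^N_\omega e(\omega)=\sigma(\omega)\|T^N_\omega e(\omega)\|e(\theta^N\omega)$ with $\sigma:\Omega\to\{\pm1\}$, then trivializing $\sigma$ by a gauge $e\mapsto s\,e$ requires $s(\theta^N\omega)=\sigma(\omega)s(\omega)$, i.e.\ that $\sigma$ be a $\theta^N$-coboundary. Over an ergodic base $\sigma$ is cohomologous to one of the two constants $\pm1$, and when that constant is $-1$ no such $s$ exists, so the normalization fails. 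Primality of $N$ does not help here. The correct fix is the one already built into the paper's framework: require only $S^N_\omega\cC_\omega\subset\cC_{\theta^N\omega}\cup(-\cC_{\theta^N\omega})$ (see (A1) of Proposition~\ref{GeneralizedKrein-Rutman}) and use the $|\cdot|$ convention in the definition of $\beta_\cC$ in~\eqref{Distancebeta}, which makes all your inequalities sign-insensitive. With that replacement your argument closes.
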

In the above proposition, there is no assumption that $e(\omega)^{\perp}$ is invariant under $S(\omega)$. This issue makes it difficult to estimate the growth of the vectors starting from $e(\omega)^{\perp}$ under iteration of $S(\omega)$. In order to overcome this difficulty, $S(\omega)$ is transformed to the new random linear compact $S_{\eps}(\omega)$ for which $e(\omega)^{\perp}$ is almost invariant and the problem of existence of invariant cone for $S(\omega)$ is shifted to the existence of invariant cone for the new one. To define $S_{\eps}(\omega)$, let $\Pi_e(\omega)$ denote the projection onto $e(\omega)$ along $e(\omega)^{\perp}$. For any $\eps\in [0,1)$, let $S_{\eps}\in \mathcal K_{\infty}(\Omega;\mathcal H)$ be given as  
\begin{equation}\label{Perturbed_Operator_variation}
S_{\eps}(\omega) v
=
\left\{
\begin{array}{ll}
   S(\omega)e(\omega)  & \hbox{ if } v=e(\omega),  \\[1ex]
  \eps\; \Pi_e(\theta\omega) S(\omega)v +(\id-\Pi_e(\theta\omega)) S(\omega)v    &  \hbox{ if } v\in e(\omega)^{\perp}.
\end{array}
\right.
\end{equation}
To see a relation between $S_{\eps}(\omega)$ and $S(\omega)$, we define a linear operator $P_\eps(\omega):\cH\rightarrow \cH$ by 
\[
P_{\eps}(\omega) v=\left\{
\begin{array}{ll}
    \eps e(\omega) & \hbox{ if } v=e(\omega), \\[1ex]
      v& \hbox{ if } v\in e(\omega)^{\perp}. 
\end{array}
\right.
\]
Then, $P_{\eps}(\omega)$ and its inverse $P_{\eps}^{-1}(\omega)$ are bounded linear operators. In fact, $\|P_{\eps}(\omega)\|=1$ and $\|P_{\eps}^{-1}(\omega)\|=\frac{1}{\eps}$. By \eqref{Perturbed_Operator_variation} and the fact that $S(\omega)e(\omega)$ is collinear to $e(\theta\omega)$, we have
\begin{equation}\label{Relation_01}
P_{\eps}(\theta\omega)^{-1} S_{\eps}(\omega) P_{\eps}(\omega)
=
S(\omega).
\end{equation}
Before going to the proof of Proposition \ref{TechnicalProposition_C}, we establish some preparatory results on $S_{\eps}(\omega)$.
\begin{lemma}\label{TechnicalLemma3} For any $n\in\N$ and $\eta>0$, there exists $\eps\in (0,1)$ such that 
\[
\|S_{\eps,\omega}^n v\|\leq (1+\eta)\|T_{\omega}^n\|\| v\|\qquad \hbox{ for all } v\in e(\omega)^{\perp}, \omega\in \Omega.
\]
\end{lemma}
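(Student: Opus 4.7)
The plan is to exploit the conjugacy \eqref{Relation_01}, which iterates to
\[
S_{\eps,\omega}^n \;=\; P_{\eps}(\theta^n\omega)\, S_{\omega}^n\, P_{\eps}(\omega)^{-1}.
\]
Since $v\in e(\omega)^{\perp}$ implies $P_{\eps}(\omega)^{-1}v=v$, one gets $S_{\eps,\omega}^n v = P_{\eps}(\theta^n\omega)\,S_{\omega}^n v$. Writing the orthogonal decomposition $S_{\omega}^n v = b_n\,e(\theta^n\omega)+r_n$ with $r_n\in e(\theta^n\omega)^{\perp}$, the operator $P_{\eps}(\theta^n\omega)$ scales the $e$-component by $\eps$ and leaves $r_n$ untouched, so
\[
\|S_{\eps,\omega}^n v\|^2 \;=\; \eps^2\, b_n^2 + \|r_n\|^2.
\]

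The crux is to identify $r_n$. Relative to the splittings $\cH = \mathrm{span}(e(\omega))\oplus e(\omega)^{\perp}$ and $\cH=\mathrm{span}(e(\theta\omega))\oplus e(\theta\omega)^{\perp}$, the hypothesis that $T(\omega)e(\omega)$ is collinear with $e(\theta\omega)$ together with the definition \eqref{Perturbed_Operator} show that both $T(\omega)$ and $S(\omega)$ are upper-triangular with the \emph{same} lower-right block $(\id-\Pi_e(\theta\omega))\,T(\omega)|_{e(\omega)^{\perp}}$ and the same upper-right block $\Pi_e(\theta\omega)\,T(\omega)|_{e(\omega)^{\perp}}$; they differ only in the upper-left scalar (by the factor $e^{2\alpha}$). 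Because multiplying such block-upper-triangular operators propagates the lower-right entry multiplicatively, a short induction on $n$ yields
\[
(\id-\Pi_e(\theta^n\omega))\,S_{\omega}^n v \;=\; (\id-\Pi_e(\theta^n\omega))\,T_{\omega}^n v\qquad\hbox{for all } v\in e(\omega)^{\perp}.
\]
In particular $\|r_n\|\leq \|T_{\omega}^n v\|\leq \|T_{\omega}^n\|\,\|v\|$.

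For the $b_n$-term we use the crude bound $|b_n|\leq \|S_{\omega}^n v\|\leq \|S\|_{\infty}^n\,\|v\|$. On the other hand, collinearity of $T(\omega)e(\omega)$ with $e(\theta\omega)$ combined with $\|T(\omega)e(\omega)\|\geq \rho$ shows that $T_{\omega}^n e(\omega)$ is a scalar multiple of $e(\theta^n\omega)$ of modulus at least $\rho^n$, so $\|T_{\omega}^n\|\geq\rho^n$ uniformly in $\omega$. Combining,
\[
\|S_{\eps,\omega}^n v\|^2 \;\leq\; \Big(\eps^2\,(\|S\|_{\infty}/\rho)^{2n}+1\Big)\,\|T_{\omega}^n\|^2\,\|v\|^2,
\]
so any $\eps\in(0,1)$ with $\eps^2(\|S\|_{\infty}/\rho)^{2n}\leq (1+\eta)^2-1$ does the job.

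The main obstacle is the block-propagation identity in the second paragraph; once the shared lower-right block of $T(\omega)$ and $S(\omega)$ is noticed, the rest is an elementary optimization over $\eps$ using the uniform lower bound $\|T_\omega^n\|\geq\rho^n$ that comes for free from the invariance of the direction $e(\omega)$.
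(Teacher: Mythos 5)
Your proof is correct, and it takes a genuinely different route from the paper's. The paper argues perturbatively: it observes $\|S_\eps(\omega)-S_0(\omega)\|\le\eps\|S\|_\infty$, deduces $\sup_\omega\|S^n_{\eps,\omega}-S^n_{0,\omega}\|\to 0$ as $\eps\to 0$, invokes $\|T^n_\omega\|\ge\rho^n$ to convert this into $\|S^n_{\eps,\omega}-S^n_{0,\omega}\|\le\eta\|T^n_\omega\|$ for small $\eps$, and finishes by noting $S^n_{0,\omega}v=(\id-\Pi_e(\theta^n\omega))T^n_\omega v$ for $v\in e(\omega)^\perp$, hence $\|S^n_{0,\omega}v\|\le\|T^n_\omega\|\|v\|$. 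You instead iterate the exact conjugacy \eqref{Relation_01} to get $S^n_{\eps,\omega}v=P_\eps(\theta^n\omega)S^n_\omega v$ for $v\in e(\omega)^\perp$, split $S^n_\omega v$ orthogonally, and exploit the Pythagorean identity $\|S^n_{\eps,\omega}v\|^2=\eps^2 b_n^2+\|r_n\|^2$. The block-triangularity observation that drives both proofs is the same fact in two guises: your identity $(\id-\Pi_e(\theta^n\omega))S^n_\omega v=(\id-\Pi_e(\theta^n\omega))T^n_\omega v$ is equivalent to the paper's $S^n_{0,\omega}v=(\id-\Pi_e(\theta^n\omega))T^n_\omega v$, since $S_0$ is block-diagonal with the shared lower-right block. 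What your route buys is an explicit, closed-form admissible range $\eps^2(\|S\|_\infty/\rho)^{2n}\le(1+\eta)^2-1$ instead of an abstract compactness/limit argument, and it sidesteps the auxiliary operator $S_0$ entirely by working with the scaling operator $P_\eps$ directly; the cost is a slightly longer setup justifying that the lower-right block of $S^n_\omega$ and $T^n_\omega$ agree. Both arguments are sound.
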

\begin{proof}
Choose and fix an arbitrary $n\in\N, \eta>0$. By \eqref{Perturbed_Operator_variation}, we have 
\[
\|S_{\eps}(\omega)-S_{0}(\omega)\|
\leq \eps \|S(\omega)\|\leq \eps \|S\|_{\infty}\quad \hbox{ for } \omega\in\Omega,
\]
which implies that 
\begin{equation}\label{Estimate1}
\lim_{\eps\to 0} \left(\sup_{\omega\in\Omega}\|S_{\eps,\omega}^n- S_{0,\omega}^n\|\right)=0.
\end{equation}
Note that by \eqref{dominateddirection} and invariance of $e(\omega)$ we have 
\[
\|T_{\omega}^n\|\geq \|T_{\omega}^n e(\omega)\|\geq\rho^n\quad\hbox{ for  } \omega\in\Omega.
\]
This together with \eqref{Estimate1} gives that there exists $\eps>0$ such that 
\begin{equation}\label{Estmiate2}
\|S_{\eps,\omega}^n- S_{0,\omega}^n\|
\leq \eta \|T_{\omega}^n\|\quad\hbox{ for } \omega\in\Omega.
\end{equation}
By \eqref{Perturbed_Operator_variation} and the fact that $T(\omega)$ preserves the random unit vector $e(\omega)$, we 
deduce that 
\[
S_{0,\omega}^n v= (\id-\Pi_{e}(\theta^n\omega))T_{\omega}^n v\qquad\hbox{ for all } v\in e(\omega)^{\perp}.
\]
Thus, $\|S_{0,\omega}^nv\|\leq \|T_{\omega}^n\|$ for all $v\in e(\omega)^{\perp}$ with $\|v\|=1$. Combining this fact with \eqref{Estmiate2}, we obtain that for all $v\in e(\omega)^{\perp}$ with $\|v\|=1$
\[
\|S_{\eps,\omega}^nv\|\leq \|S_{0,\omega}^n v\|+\eta\|T_{\omega}^n\|
\leq (1+\eta)\|T_{\omega}^n\|.
\]
The proof is complete.
\end{proof}
We are now in a position to prove Proposition \ref{TechnicalProposition_C}.
\begin{proof}[Proof of Proposition \ref{TechnicalProposition_C}]
(i) Let $\xi\in \cH$ be arbitrary with $\|\xi\|=1$. Then, for each $\omega\in\Omega$ there exist $\gamma\in\R$ and $u\in e(\omega)^{\perp}$ such that $\xi=\gamma e(\omega)+u$. Thus $\|\xi\|^2=\gamma^2+\|u\|^2$ and hence $|\gamma|\leq 1$. Consequently, by \eqref{Perturbed_Operator} we have 
\[
\|T(\omega) \xi-S(\omega)\xi\|=\|T(\omega) u-S(\omega)u\|=|(e^{2\alpha}-1)\gamma|\leq (e^{2\alpha}-1),
\]
which completes the proof of this part.

(ii) Thanks to \eqref{Relation_01} and Example \ref{ConeExample}(i), it is sufficient to prove that for some $\eps>0$ the random compact operator $S_{\eps}$ defined as in \eqref{Perturbed_Operator_variation} satisfies the assertion (ii), i.e. a long enough iteration of  $S_{\eps}$ preserves a random cone $(\cC_{\omega})_{\omega\in\Omega}$ satisfying the condition (C). Here, the family of closed convex cones $(\cC_{\omega})_{\omega\in\Omega}$ is given by 
\begin{equation}\label{Randomcone}
\cC_{\omega}:=\left\{v\in \cH:\langle e(\omega),v\rangle \geq\frac{\|v\|}{2}\right\}\qquad \hbox{for } \omega\in\Omega.
\end{equation}
By Example \ref{ConeExample}(ii),  $(\cC_{\omega})_{\omega\in\Omega}$ satisfies the condition (C). Choose and fix  $k\in\N$ such that $ e^{\alpha k N}\geq 8$. Using  Lemma \ref{TechnicalLemma3} with  $\eta:=\frac{1}{3}$, there exists $\eps>0$ such that  
\begin{equation}\label{Twoconstants}
\|S_{\eps,\omega}^{kN} v\|\leq \frac{4}{3}\|T_{\omega}^{kN}\|\| v\|\qquad \hbox{ for all } v\in e(\omega)^{\perp}, \omega\in \Omega.
\end{equation}
To conclude the proof, we show that 
\begin{equation}\label{InvarianceCone}
S_{\eps,\omega}^{kN} \cC_{\omega}
\subset \mathcal C_{\theta^{kN} \omega}\cup (-\mathcal C_{\theta^{kN}\omega})\qquad\hbox{ for } \omega\in\Omega,
\end{equation}
and there exists $R>0$ such that 
\begin{equation}\label{Step3_Eq1}
\beta_{\cC_{\theta^{kN}\omega}}(S_{\eps,\omega}^{kN} v,e(\theta^{kN}\omega))\leq R\qquad\hbox{for } v\in \cC_{\omega}\setminus\{0\} \hbox{ with } \|v\|=1.    
\end{equation}
For this purpose, choose and fix an arbitrary unit vector $v=\gamma e(\omega)+ u\in \cC_{\omega}$, where $u\in e(\omega)^{\perp}$. Thus, $\|v\|=\sqrt{\gamma^2+\|u\|^2}$ and by \eqref{Randomcone}, we have 
\begin{equation}\label{Initialvector}
\gamma= \langle e(\omega),v\rangle \geq 
\frac{\sqrt{\gamma^2+\|u\|^2}}{2}.
\end{equation}
From $S_{\eps,\omega}^{kN} v= \gamma S_{\eps,\omega}^{kN} e(\omega)+ S_{\eps,\omega}^{kN} u$, we derive that 
\begin{equation}\label{Norm_Estimate01}
\|S_{\eps,\omega}^{kN} v\|
\leq |\gamma|\|S_{\eps,\omega}^{kN} e(\omega)\|+\|S_{\eps,\omega}^{kN}u\|.
\end{equation}
Since $T(\omega)e(\omega)$ and hence $S_{\eps}(\omega)e(\omega)$ are collinear to $e(\theta\omega)$ it follows that 
\[
|\langle e(\theta^n\omega), S_{\eps,\omega}^n e(\omega)\rangle|=\|S_{\eps,\omega}^ne(\omega)\|
\quad \hbox{ for } n\in\N.
\] 
Consequently,  
\[
\langle e(\theta^{kN}\omega),S_{\eps,\omega}^{kN} v\rangle
=
\gamma \|S_{\eps,\omega}^{kN}e(\omega)\|
+ 
\langle e(\theta^{kN}\omega),S_{\eps,\omega}^{kN} u\rangle.
\]
Thus,
\begin{equation}\label{Innerproduct_Esitmate}
\left|\langle e(\theta^{kN}\omega),S_{\eps,\omega}^{kN} v\rangle \right|
\geq |\gamma| \|S_{\eps,\omega}^{kN}e(\omega)\| -\|S_{\eps,\omega}^{kN} u\|.
\end{equation}

\noindent
\underline{\emph{Verification of \eqref{InvarianceCone}}}: By the  definition of $\cC_{\omega}$ as in \eqref{Randomcone}, to show \eqref{InvarianceCone} it is sufficient to
verify that  $\left|\langle e(\theta^{kN}\omega),S_{\eps,\omega}^{kN} v\rangle \right|\geq \frac{\|S_{\eps,\omega}^{kN} v\|}{2}$. By \eqref{Norm_Estimate01} and \eqref{Innerproduct_Esitmate}, it is sufficient to show that $|\gamma| \|S_{\eps,\omega}^{kN}e(\omega)\|
  \geq 
 3 \|S_{\eps,\omega}^{kN} u\|$. In fact, we show a slightly stronger inequality
\begin{equation}\label{Newaim}
  |\gamma| \|S_{\eps,\omega}^{kN}e(\omega)\|
  \geq 
 2\sqrt{3} \|S_{\eps,\omega}^{kN} u\|,
\end{equation}
which is also useful for other estimates later. To gain this, firstly by \eqref{Perturbed_Operator}, \eqref{Perturbed_Operator_variation} and  \eqref{dominateddirection} we have 
\[
|\gamma|\|S_{\eps,\omega}^{kN}e(\omega)\|= |\gamma|e^{2\alpha kN } \|T_{\omega}^{kN}e(\omega)\|
\geq |\gamma|e^{\alpha kN} \|T_{\omega}^{kN}\|.
\]
Since $e^{\alpha kN}\geq 8$ it follows that $|\gamma|\|S_{\eps,\omega}^{kN}e(\omega)\|\geq 8|\gamma| \|T_{\omega}^{kN}\|$. Secondly, by \eqref{Twoconstants} we have $3\|S_{\eps, \omega}^{kN}u\|\leq 4\|T_{\omega}^{kN}\|\|u\|$. Finally, by \eqref{Initialvector} we have $\sqrt{3}|\gamma|\geq \|u\|$ and thus \eqref{Newaim} has been proved.

\noindent 
\underline{\emph{Verification of \eqref{Step3_Eq1}}}:  We are now estimating  $\beta_{\cC}(S_{\eps,\omega}^{kN}v, e(\theta^{kN}\omega))$. For this purpose, for any $t>0$ we have 
\[
\left|\left\langle
e(\theta^{kN}\omega), tS_{\eps,\omega}^{kN}v-e(\theta^{kN}\omega)
\right\rangle\right|    
\geq 
|t|\left|\left\langle
e(\theta^{kN}\omega), S_{\eps,\omega}^{kN}v
\right\rangle\right|-1,
\]
which together with \eqref{Innerproduct_Esitmate} implies that 
\begin{equation*}\label{Step3_Eq2}
 \left|\left\langle
e(\theta^{kN}\omega), tS_{\eps,\omega}^{kN}v-e(\theta^{kN}\omega)
\right\rangle\right|    
\geq 
|t||\gamma| \|S_{\eps,\omega}^{kN}e(\omega)\| -|t|\|S_{\eps,\omega}^{kN} u\|-1.
\end{equation*}
Meanwhile, using \eqref{Norm_Estimate01}, we obtain 
\[
\frac{\|tS_{\eps,\omega}^{kN}v-e(\theta^{kN}\omega)\|}{2}
\leq 
\frac{t\|S_{\eps,\omega}^{kN}v\|+1}{2} \leq 
\frac{t|\gamma|\|S_{\eps,\omega}^{kN}e(\omega)\|+\|S_{\eps,\omega}^{kN}u\|+1}{2}
\]
Hence, in order to have the inclusion  $tS_{\eps,\omega}^{kN}v-e(\theta^{kN}\omega)\in \cC_{\theta^{kN}\omega}$, equivalently \[\left|\left\langle
e(\theta^{kN}\omega), tS_{\eps,\omega}^{kN}v-e(\theta^{kN}\omega)
\right\rangle\right| \geq \frac{\|tS_{\eps,\omega}^{kN}v-e(\theta^{kN}\omega)\|}{2},
\]  
the positive number $t$ just only satisfies the following inequality 
\begin{equation}\label{Required_Inequality}
t|\gamma |\|S_{\eps,\omega}^{kN}e(\omega)\| \geq 3 t|\gamma|  \|S_{\eps,\omega}^{kN}u\|+3.
\end{equation}
From the fact that $\|v\|=1$ and \eqref{Initialvector}, we have $\gamma\geq \frac{1}{2}$. Thus, by virtue of \eqref{Newaim} and the fact that $\|S_{\eps,\omega}^{kN}e(\omega)\|\geq \|T_{\omega}^{kN}e(\omega)\|\geq \rho^{kN}$, any $t$ satisfying 
\[
t\geq \frac{12}{(2-\sqrt{3})\rho^{kN}}
\geq 
 \frac{3}{\left(1-\frac{\sqrt 3}{2}\right)|\gamma|\rho^{kN}}
\]
also satisfy \eqref{Required_Inequality}. Consequently, $\beta_{\cC}(S_{\eps,\omega}^{kN}v, e(\theta^{kN}\omega))\leq \frac{12}{(2-\sqrt{3})\rho^{kN}}$, which proves \eqref{Step3_Eq1}. The proof is complete.
\end{proof}
\subsection{Perturbation of random compact operators converging to null operator}\label{Subsection3.2}
In the following result, we show that corresponding to a random compact operator $T$ converging to the null operator there exist a finite number of disjoint sets $V,\theta V,\dots,\theta^N V$  such that there exists a perturbed random compact linear operator which is arbitrarily close to $T$ sending any given random vector space of dimension $1$  starting from $V$ to any given random vector space of dimension $1$  starting from $\theta^N V$. As a corollary, the random compact operator $T$ can be perturbed to a random compact operator having an invariant random subspace of dimension $1$.
\begin{proposition}\label{TechnicalProposition_1} Let $T\in\mathcal N$ and $\eps>0$ be arbitrary. Then  there exists an integer $N^*(T,\eps)\in\N$ (depending only on $T,\eps$) such that for all $N\geq N^*(T,\eps)$ there exists  a set $V\in\mathcal F$ of positive $\mP$-probability such that the sets $V,\theta V,\dots, \theta^N V$ are disjoint and the following statements hold

\noindent
(i) For any random unit vector $e:V\cup \theta^N V\rightarrow \mathcal H$, there exists $S\in\mathcal K_{\infty}(\Omega,\mathcal H)$ and a scalar random variable $\beta:\bigcup_{i=0}^{N-1}\theta^i V\rightarrow\R$ with $|\beta(\omega)|\geq \frac{\eps}{3}$ for $\omega\in \bigcup_{i=0}^{N-1}\theta^i V$ such that 
\begin{itemize}
\item [(p1)] \emph{Small perturbation}:
\[
S(\omega)=T(\omega)\hbox{ for } \omega\not\in \bigcup_{i=0}^{N-1}\theta^i V
\hbox{ and } \|S(\omega)- T(\omega)\|\leq \eps\hbox{ for } \omega\in \bigcup_{i=0}^{N-1}\theta^i V.	
\]
\item [(p2)] \emph{Existence of invariant one-dimensional random subspace along the sets $V,\theta V,\dots, \theta^N V$}: There exists an extension of $e$ to a random unit vector $e:\bigcup_{i=0}^N\theta^i V\rightarrow \cH$ such that
\[
S(\theta^i\omega)e(\theta^i\omega)=\beta(\theta^i\omega)e(\theta^{i+1}\omega)\quad \hbox{ for } \omega\in V, i=0,1,\dots,N-1.
\]
\end{itemize}

\noindent 
(ii) Let  $h:V\cup\theta^N V\rightarrow \cH$ be a random unit vector. Then, there exist $R \in \cK_{\infty}(\Omega,\cH)$, an extension of $h$ to a random unit vector $h:\Omega\rightarrow \cH$ and a scalar random variable $\gamma:\Omega\rightarrow \mathbb R$ with $|\gamma(\omega)|\geq \frac{\eps}{3}$ for $\omega\in\Omega$ such that $\|R-T\|_{\infty}\leq \eps$ and $R(\omega) h(\omega)=\gamma(\omega)h(\theta\omega)$.

\end{proposition}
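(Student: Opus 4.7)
The proof combines Halmos-Rokhlin's lemma (recorded in the appendix) with a rotation-type perturbation in the spirit of Millionshchikov's technique (cf.\ \cite{Millionshchikov,Cong05}), adapted to the infinite-dimensional Hilbert setting. Let $M := \|T\|_{\infty}$. I will choose $N^*(T,\varepsilon)$ of order $M/\varepsilon$, so that the per-step angular rotation budget---essentially $\arcsin(\varepsilon/M)$, reflecting the fact that a rank-one perturbation of operator norm at most $\varepsilon$ can deflect the image of a unit vector of norm at most $M$ by at most that angle---summed over $N^*$ steps exceeds $\pi$ (the diameter of the unit sphere of $\cH$ in geodesic distance). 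For $N\geq N^*$, Halmos-Rokhlin produces $V\in\cF$ of positive probability with $V,\theta V,\ldots,\theta^N V$ pairwise disjoint.

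For \textbf{(i)}, given the boundary vectors $e(\omega),e(\theta^N\omega)$, I extend $e$ inductively along the tower. At step $i\in\{0,\ldots,N-1\}$, write $v_i := T(\theta^i\omega)e(\theta^i\omega)$. When $\|v_i\|$ is larger than a threshold of order $\varepsilon$, I take $e(\theta^{i+1}\omega)$ to be a small rotation of the natural direction $v_i/\|v_i\|$ in the two-plane spanned by $v_i$ and a correction vector chosen to steer the trajectory toward $e(\theta^N\omega)$, with rotation angle $\psi_i$ comparable to $\varepsilon/\|v_i\|$, and set $\beta(\theta^i\omega):=\langle v_i, e(\theta^{i+1}\omega)\rangle$. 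When $\|v_i\|$ is small, I choose $e(\theta^{i+1}\omega)$ freely (still aimed toward $e(\theta^N\omega)$) and set $|\beta(\theta^i\omega)|:=\varepsilon/3$ with appropriate sign. In either case the rank-one operator
\[
S(\theta^i\omega)-T(\theta^i\omega) \;=\; \bigl(\beta(\theta^i\omega)\,e(\theta^{i+1}\omega)-v_i\bigr)\otimes e(\theta^i\omega)
\]
has operator norm at most $\varepsilon$ and satisfies $|\beta(\theta^i\omega)|\geq \varepsilon/3$. Off the tower I put $S:=T$, and measurability of the selections ($V$, the rotation plane, the scalar $\beta$, the vectors $e(\theta^i\omega)$) is handled by the measurable selection theorems collected in the appendix.

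The \textbf{main obstacle} is showing that the accumulated small rotations actually reach the prescribed endpoint $e(\theta^N\omega)$ in exactly $N$ steps. Since the two-plane of rotation at step $i$ depends on the direction $v_i$ generated by $T$'s action on the previously constructed $e(\theta^i\omega)$, a geodesic interpolation in a single fixed two-plane is unavailable. I plan a reachability argument of Millionshchikov type: at each step the cone of attainable directions for $e(\theta^{i+1}\omega)$ has half-angle at least $\arcsin(\varepsilon/M)$ around the natural direction $v_i/\|v_i\|$, and an appropriate choice of the correction vector at each step guarantees that the angular distance to $e(\theta^N\omega)$ decreases by at least $\psi_i$ per step. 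Accumulated over $N\geq N^*$ steps this provides at least $\pi$ radians of angular budget, and the final snap onto $e(\theta^N\omega)$ is absorbed into the rank-one perturbation at step $N-1$, whose operator norm remains below $\varepsilon$ by the same elementary estimate.

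For \textbf{(ii)}, the construction of (i) handles the tower $V,\theta V,\ldots,\theta^N V$. To extend $h$ and $R$ to all of $\Omega$, I iterate the construction on a sequence of disjoint Rokhlin towers exhausting $\Omega\setminus\bigcup_{i=0}^N\theta^iV$ (via, for instance, the Kakutani induced system on the complement). Because the perturbations at different stages have disjoint supports, the global estimate $\|R-T\|_{\infty}\leq\varepsilon$ and the pointwise lower bound $|\gamma|\geq\varepsilon/3$ are preserved throughout, and the invariance relation $R(\omega)h(\omega)=\gamma(\omega)h(\theta\omega)$ holds everywhere after a consistent choice of boundary data between successive towers.
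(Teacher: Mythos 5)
Your proposal contains a genuine gap: the incremental ``rotation budget'' argument at the heart of (i) does not work, and the actual mechanism the paper uses is different in kind, not just in detail.

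The rotation budget is sound only when the unperturbed dynamics moves directions on the unit sphere by a bounded angle per step, as it does for uniformly invertible finite-dimensional cocycles (the setting of Millionshchikov and of \cite{Cong05}, where $\|A(\omega)\|$ and $\|A(\omega)^{-1}\|$ are both essentially bounded, so the cocycle is bilipschitz on projective space). Here $T(\omega)$ is compact and non-invertible, so nothing prevents $T(\theta^i\omega)$ from projecting $\cH$ onto a line and placing the natural direction $v_i/\|v_i\|$ at angular distance close to $\pi/2$ from $e(\theta^i\omega)$ in a single step. Your steering decreases the angle to the target only by $\psi_i\approx\eps/\|v_i\|\geq\eps/M$ per step, but it is applied around $v_i/\|v_i\|$, not around $e(\theta^i\omega)$; if $T$'s action moves the natural direction away from $e(\theta^N\omega)$ faster than $\psi_i$, the angular distance to the target increases rather than decreases, and the accumulated budget $N\arcsin(\eps/M)\geq\pi$ is irrelevant. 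Concretely, your claimed inequality $\angle(e(\theta^{i+1}\omega),e(\theta^N\omega))\leq\angle(e(\theta^i\omega),e(\theta^N\omega))-\psi_i$ would require $\angle(v_i/\|v_i\|,e(\theta^N\omega))\leq\angle(e(\theta^i\omega),e(\theta^N\omega))$, and there is no reason this should hold. This also explains why your choice $N^*\approx M/\eps$ cannot be correct: it never uses the hypothesis $T\in\mathcal N$, which is essential to the statement.

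The paper's proof does something structurally different. Set
\[
\Omega_n:=\Big\{\omega:\ \|T_\omega^k\|<(\eps/3)^k\ \text{for all}\ k\geq n\Big\}.
\]
Since $T\in\mathcal N$, these sets increase to a full-measure set, so for all $N\geq N^*(T,\eps)$ one has $\mP(\Omega_{N-1})>0$, and Halmos--Rokhlin is applied \emph{inside} $\Omega_{N-1}$ to produce $V$. For $\omega\in V$ one then has
\[
\|T_\omega^N e(\omega)\|=\prod_{i=0}^{N-1}\Big\|T(\theta^i\omega)\tfrac{T_\omega^i e(\omega)}{\|T_\omega^i e(\omega)\|}\Big\|<(\eps/3)^N,
\]
so at least one factor is $\leq\eps/3$; writing $j=j(\omega)$ for the first such index partitions $V$ into the sets $V_j$. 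At that single index $j$, the operator $T(\theta^j\omega)$ restricted to the current direction has norm at most $\eps/3$, so one performs one rank-one ``teleport'' of operator norm at most $2\eps/3$: the image of the current direction is redefined to be $(\eps/3)$ times the appropriate pre-image of $e(\theta^N\omega)$ under the remaining $N-j-1$ iterates (made possible after a preliminary perturbation to bijectivity). At every other step $i\neq j$ one leaves $T$ untouched. A final adjustment enforces $|\beta|\geq\eps/3$. There is no incremental steering at all; the invariance is arranged by a single jump, and the existence of a cheap step at which the jump is affordable is precisely what $T\in\mathcal N$ guarantees. In your draft the case ``$\|v_i\|$ small'' appears only as an edge case you teleport through; in fact it is the entire mechanism, it is guaranteed to occur in the tower, and it alone solves the problem. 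The rotation-budget machinery around it should be discarded.

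Part (ii): your idea of exhausting $\Omega$ by Rokhlin-type towers and pasting is in the right spirit, but the paper's construction is cleaner and worth recording: it uses the first-return (Poincar\'e) decomposition of the complement, writing $\Omega$ as the disjoint union of $\theta^j W_k$, $W_k\subset V$ the points of $V$ with first return time $k\geq N+1$, and applies the same single-jump construction on each return block. This gives a globally defined $R$ with $\|R-T\|_\infty\leq\eps$ and $|\gamma|\geq\eps/3$ everywhere without delicate consistency conditions at tower boundaries.
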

\begin{proof}
Choose and fix an arbitrary $\eps>0$. Since $T\in\mathcal N$ it follows that 
\begin{equation}\label{ThmA_Eq1}
\lim_{n\to\infty}\frac{1}{n}\log\|T_{\omega}^n\|=-\infty\qquad\hbox{ for } \omega \in \Omega.
\end{equation}
For each $n\in\N$, we define 
\begin{equation}\label{ThmA_Eq2}
\Omega_n:=\left\{\omega\in\Omega: \|T_{\omega}^k\|< \left(\frac{\eps}{3}\right)^k\quad \hbox{for all } k\geq n\right\}.
\end{equation}
Then, $\Omega_n\subset \Omega_{n+1}$ and by \eqref{ThmA_Eq1} we have $\Omega=\bigcup_{n\geq 1}\Omega_n$. This implies that there exists $N^*(T,\eps)\in\N$ (depending on $T$ and $\eps$) such that  $\mP(\Omega_{N-1})>0$ for all $N\geq N^*(T,\eps)$. Choose and fix an arbitrary $N\geq N^*(T,\eps)$. By Theorem \ref{HR_Lemma}, there exists a measurable set $V\subset \Omega_{N-1}$ of positive probability such that $V,\theta V,\dots, \theta^{N} V$ are pairwise disjoint.  We are now proving that the measurable set $V$ satisfies the desired statements of the proposition:

(i) By Proposition \ref{Completeness} (iii), the set of random bijective compact operators is dense in the space of random compact operators. Then,  $T(\omega)$, where $\omega\in \bigcup_{i=0}^{N-1}\theta^i V$, can be perturbed to an arbitrarily closed random bijective compact linear operator that still satisfies \eqref{ThmA_Eq2}.  Then, w.l.o.g. we  assume that $T(\omega)$ is bijective for $\omega\in \bigcup_{i=0}^{N-1}\theta^i V$. Since $V\subset \Omega_N$ it follows that 
 \[
\|T^N_{\omega} e(\omega)\|=
\prod_{i=0}^{N-1}\left\|T(\theta^i\omega)\frac{T^{i}_{\omega}e(\omega)}{\|T^{i}_{\omega}e(\omega)\|}\right\|<
\left(\frac{\eps}{3}\right)^N,
\]
which implies that $V$ can be partitioned as the  union of the following disjoint sets $V_0,\dots V_{N-1}$, where for $j=0,1,\dots,N-1$
\begin{align*}
V_{j}
&:=
\Big\{\omega\in V: \|T(\theta^{j}\omega )\frac{T^{j}_{\omega}e(\omega)}{\|T^{j}_{\omega}e(\omega)\|}\|\leq \frac{\eps}{3}\; \&\; \|T(\theta^i\omega)\frac{T^{i}_{\omega}e(\omega)}{\|T^{i}_{\omega}e(\omega)\|}\|>\frac{\eps}{3}\notag\\
&\hspace{6.8cm} \hbox{ for } i=0,\dots,j-1\Big\}.   
\end{align*}
Now for each $\omega\in V_{j}$, we let $S(\theta^i\omega)=T(\theta^i\omega)$ for $i=0,\dots,j-1$ and $i=j+1,\dots, N-1$. It remains to construct $S (\theta^j\omega)$ by letting 
\begin{equation}\label{Rotate}
S(\theta^j\omega)
\frac{S^{j}_{\omega}e(\omega)}{\|S^{j}_{\omega}e(\omega)\|}
=
\frac{\eps }{3} 
\frac{(S^{N-j-1}_{\theta^{j+1}\omega})^{-1}e(\theta^N\omega)}{\|(S^{N-j-1}_{\theta^{j+1}\omega})^{-1}e(\theta^N\omega)\|}
\end{equation}
and $S(\theta^j\omega) v
=T(\theta^j\omega) v$ for any $v\in \big(S^{j}_{\omega}e(\omega)\big)^{\perp}$. Then, for any $\omega\in V_{j}$ we have $\|S(\theta^i\omega)-T(\theta^i\omega)\|\leq \frac{\eps}{3}$ for $i=0,1,\dots,N-1$ and by \eqref{Rotate} we arrive that $S^N_{\omega}e(\omega)$ is colinear to $e(\theta^N\omega)$. For $\omega\in V$ and $i=1,\dots,N-1$ we let $e(\theta^i\omega)=\frac{S_{\omega}^i e(\omega)}{\|S_{\omega}^i e(\omega)\|}$. Thus, there exists a non-zero random variable $\beta:\bigcup_{i=0}^{N-1}\theta^i V\rightarrow\R$ such that 
\begin{equation*}\label{Colinearity_01}
S(\omega) e(\omega)=\beta(\omega)e(\theta\omega)\qquad\hbox{ for } \omega\in \bigcup_{i=0}^{N-1}\theta^i V.
\end{equation*}
The remaining property is that $|\beta(\omega)|\geq \frac{\eps}{3}$ for all $\omega\in \bigcup_{i=0}^{N-1}\theta^i V$.  This property can be obtained by perturbing $S(\omega)$ on $\bigcup_{i=0}^{N-1}\theta^i V$, if necessary, as follows 
\begin{equation*}
S_{\mathrm{pert}}(\theta^i\omega) v:=
\left\{
\begin{array}{ll}
     \left(\beta(\theta^i\omega)+ \mbox{sign}( \beta(\theta^i\omega))\frac{\eps}{3}\right)S(\theta^i\omega) v &\quad 
     \hbox{if } v=e(\theta^i\omega),\\[1ex]
     S(\theta^i\omega) v &\quad \hbox{if } v\in (e(\theta^i\omega))^{\perp}.
     \end{array}   
\right.
\end{equation*}

(ii) In order to obtain the desired random compact linear operator $R$, we follow an analogous construction in (i) with a minor adjustment. More precisely,  the whole space $\Omega$ is decomposed as the union of the following disjoint sets. For each $k\in\N$ with $k\geq N+1$,  let $W_k$ denote the set of points in $V$ of which the first return time is $k$, i.e. \begin{equation*}
W_k:=\left\{
\omega\in V: \theta\omega,\dots,\theta^{k-1}\omega\not\in V, \theta^k\omega\in V
\right\}.
\end{equation*}
Then, $\theta^j V_k, k=N+1,\dots, j=1,\dots,k-1$, are pairwise disjoint and by Poincar\'{e}'s recurrence theorem, 
\begin{equation*}
V=\bigcup_{k=N+1}^{\infty}W_k,\qquad \Omega=\bigcup_{k=N+1}^{\infty}\bigcup_{j=0}^{k-1}\theta^j W_k.
\end{equation*}
For each $k\geq N+1$, the construction of the random compact linear operator $R(\omega)$, where $\omega\in \bigcup_{j=0}^{k-1}\theta^j V_k$, can be proceeded similarly as the construction of the random compact linear operator $S(\omega)$, where  $\omega\in \bigcup_{j=0}^{N-1}\theta^j V$ as in (i). 
\end{proof}
As a corollary of the preceding result, any random compact operator converging to null operator can be perturbed to one having a finite number of Lyapunov exponents. 
\begin{corollary}\label{Corollary_1}
Let $T\in \mathcal N$ be arbitrary. Then for any $\eps>0$ there exists 
$S\in \cK_{\infty}(\Omega,\cH)$ such that $\|T-S\|_{\infty}\leq \eps$ and $S$ has a finite number of Lyapunov exponents.
\end{corollary}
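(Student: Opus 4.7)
The plan is to take $S:=R$ where $R$ arises from Proposition~\ref{TechnicalProposition_1}(ii) applied to $T$ and $\eps$. That proposition yields $R\in\cK_\infty(\Omega;\cH)$ with $\|R-T\|_\infty\le\eps$, a random unit vector $h:\Omega\to\cH$, and a scalar random variable $\gamma$ with $|\gamma(\omega)|\ge\eps/3$ satisfying $R(\omega)h(\omega)=\gamma(\omega)h(\theta\omega)$ for every $\omega$.

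The easy half is to show $\lambda_1(R)>-\infty$: the invariance of the line bundle $\mathrm{span}(h(\omega))$ under $R$ gives $\|R^n_\omega h(\omega)\|=\prod_{k=0}^{n-1}|\gamma(\theta^k\omega)|$, so Birkhoff's ergodic theorem yields
\[
\lim_{n\to\infty}\frac{1}{n}\log\|R^n_\omega h(\omega)\|\;=\;\int_\Omega \log|\gamma|\,d\mP\;\ge\;\log(\eps/3)\;>\;-\infty
\]
for $\mP$-a.e.\ $\omega$. Thus $\lambda_1(R)\ge\int\log|\gamma|>-\infty$, so $R\notin\mathcal N$ and $R$ has at least one non-$-\infty$ Lyapunov exponent.

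The harder half is to prove $s(R)<\infty$. For this I would exploit a structural feature: tracing the construction of Proposition~\ref{TechnicalProposition_1}(ii), which is pieced together from part~(i) along a Kakutani tower, one sees that $R(\omega)$ agrees with $T(\omega)$ on $h(\omega)^\perp$. With respect to the equivariant splitting $\cH=\mathrm{span}(h(\omega))\oplus h(\omega)^\perp$, $R$ therefore takes the upper-triangular block form
\[
R(\omega)=\begin{pmatrix}\gamma(\omega) & * \\ 0 & B(\omega)\end{pmatrix},\qquad B(\omega)v:=(I-P_{h(\theta\omega)})T(\omega)v\ \text{ for }\ v\in h(\omega)^\perp.
\]
By the standard block decomposition of Lyapunov spectra for upper-triangular random cocycles, the spectrum of $R$ is $\{\int\log|\gamma|\}\cup\mathrm{spec}(B)$, so finiteness of $s(R)$ reduces to bounding $s(B)$.

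The main obstacle is controlling $s(B)$. I would use that $R-T$ is rank one at each $\omega$, so the telescoping identity $R^n_\omega-T^n_\omega=\sum_{k=0}^{n-1}R^{n-1-k}_{\theta^{k+1}\omega}(R-T)(\theta^k\omega)T^k_\omega$ exhibits $R^n_\omega-T^n_\omega$ as a sum of $n$ rank-one operators, giving $\mathrm{rank}(R^n_\omega-T^n_\omega)\le n$. Combined with the super-exponential decay $\|T^n_\omega\|\to 0$ provided by $T\in\mathcal N$ and the Weyl singular-value inequality, this yields $\sigma_{j+n}(R^n_\omega)\le\|T^n_\omega\|$ for every $j\ge 1$. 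Passing to the exterior-power cocycles $\wedge^k R$, for which $\lambda_1(\wedge^k R)=\sum_{i\le k}\lambda_i(R)$, together with $\|\wedge^k T^n_\omega\|\le\|T^n_\omega\|^k$ and the analogous rank control on $\wedge^k R^n-\wedge^k T^n$, one expects to conclude $\lambda_1(\wedge^k R)=-\infty$ for all sufficiently large $k$, equivalently $s(R)<\infty$. The delicate point is reconciling the exponential growth $\int\log|\gamma|$ of the top exponent against the super-exponential decay of $\|T^n_\omega\|$ at fixed singular-value indices, which is what forces passage to exterior powers rather than a direct attack on singular values of $R^n_\omega$ at fixed index. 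With this finiteness in hand, $S:=R$ meets the requirements and the corollary follows.
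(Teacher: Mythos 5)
Your opening move coincides with the paper's: you invoke Proposition~\ref{TechnicalProposition_1}(ii) to get $R$ with $\|R-T\|_\infty\le\eps$, an invariant line bundle $\mathrm{span}(h(\omega))$, and $|\gamma|\ge\eps/3$, and your Birkhoff argument that $\lambda_1(R)\ge\log(\eps/3)>-\infty$ is exactly what the paper does. The divergence — and the genuine gap — is in the "harder half," where you try to show $s(R)<\infty$ for this $R$ directly.

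The rank-perturbation bound does not deliver what you need. From $\mathrm{rank}(R^n_\omega-T^n_\omega)\le n$ and Weyl's inequality you get $\sigma_{j+n}(R^n_\omega)\le\|T^n_\omega\|$, but this only constrains singular values at indices \emph{greater than} $n$. To show $\lambda_1(\Lambda^k R)=-\infty$ for some fixed $k$ you need $\frac{1}{n}\log\prod_{i=1}^k\sigma_i(R^n_\omega)\to-\infty$, i.e.\ control of $\sigma_k(R^n_\omega)$ at a \emph{fixed} index $k$ while $n\to\infty$; the rank bound is vacuous in that regime. The block-triangular reduction does not rescue this either: the quotient cocycle $B(\omega)=(\mathrm{id}-P_{h(\theta\omega)})T(\omega)\big|_{h(\omega)^\perp}$ involves a projection interleaved at every step, so $B^n_\omega\neq(\mathrm{id}-P)T^n_\omega$, and there is no reason for $B$ to inherit the super-exponential decay of $T$ — already for a $2\times 2$ nilpotent $T$ with $T^2=0$, a generic choice of $h$ makes the quotient a nonzero scalar cocycle with a finite exponent. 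So nothing in your argument shows $s(B)<\infty$, and the reduction "$s(R)<\infty$ iff $s(B)<\infty$" leaves you where you started. You flag this yourself as "delicate," but it is not a delicacy; it is an unfilled hole.

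The paper avoids this entirely by inserting one more perturbation you skipped: having obtained $\widetilde T$ ($=$ your $R$), it uses Proposition~\ref{Completeness}(ii) to replace $\widetilde T|_{h(\omega)^\perp}$ by a finite-rank operator within $\eps/3$, while setting $S(\omega)h(\omega):=\widetilde T(\omega)h(\omega)$ so that the invariant direction and the lower bound $\kappa(S)\ge\log(\eps/3)$ are preserved. The resulting $S$ has finite rank, hence trivially only finitely many Lyapunov exponents above $-\infty$. That finite-rank truncation is the missing step; without it, the finiteness of $s(R)$ is not established by your argument (and would in any case require structural information about the construction of $R$ that the proposition's statement does not provide).
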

\begin{proof}
  Let $\eps>0$ be arbitrary.  By Proposition \ref{TechnicalProposition_1}, there exist $N\in\N$ and a set  $V\in\mathcal F$ of positive $\mP$-probability  such that the sets $V,\theta V,\dots, \theta^N V$ are disjoint, an unit random vector $h:\Omega\rightarrow \mathcal H$, a random compact  operator  $\widetilde T\in\mathcal K_{\infty}(\Omega,\mathcal H)$ and a non-zero scalar random variable $\gamma:\Omega\rightarrow\R$ such that $\|\widetilde T- T\|_{\infty}\leq \frac{\eps}{3}$, $|\gamma(\omega)|\geq \frac{\eps}{3}$ and 
\begin{equation}\label{Colinearity}
\widetilde T(\omega) h(\omega)= \gamma(\omega) h(\theta\omega)\qquad \hbox{for  } \omega \in \Omega.
\end{equation}
The desired random compact operator $S$ can be constructed as follows. For each $\omega\in\Omega$, we consider the restriction of the compact operator $\widetilde T$ on the Hilbert space $h(\omega)^{\perp}$ denoted by $\widetilde T|_{h(\omega)^{\perp}}$. By Proposition \ref{Completeness}(ii), the space of random compact operators of finite rank is dense in the space of random compact operators. Then,  there exists a linear operator $S(\omega)|_{h(\omega)^{\perp}}$ is of finite-rank and $\sup_{\omega\in\Omega}\|\widetilde T(\omega)-S(\omega)\|_{h(\omega)^{\perp}}\leq \frac{\eps}{3}$. The linear operator $S(\omega)|_{h(\omega)^{\perp}}$ is extended to the whole space $\mathcal H$ by letting $S(\omega)h(\omega)=\widetilde T(\omega)h(\omega)$. Then, $\|S(\omega)-\widetilde T(\omega)\|\leq \frac{\eps}{3}$ and therefore $\|S-T\|\leq \frac{2\eps}{3}$. Furthermore, the colinearity property \eqref{Colinearity} is still true for $S$, i.e. 
\begin{equation*}
S(\omega) h(\omega)= \gamma(\omega) h(\theta\omega)\qquad \hbox{for all } \omega \in \Omega,
\end{equation*}
 which implies that $\| S^n_{\omega}\|\geq \left(\frac{\eps}{3}\right)^n$. Hence,
 \[
 \kappa(S)=\lim_{n\to\infty}\frac{1}{n}\log\|S^n_{\omega}\|\geq \log\left(\frac{\eps}{3}\right).
 \]
 This, together with Theorem \ref{MET_HilbertSpace}, implies that $S$ has either a finite or an infinite and countable number of Lyapunov exponents. Note that  $S$ is of finite rank and therefore $S$ cannot have an infinite and countable number of Lyapunov exponents. The proof is complete.
 \end{proof}

\section{Proof of Theorem A}\label{Section_ProofA}

\subsection{Density of bounded random compact operators having finite  number of Lyapunov exponents}

The set $\cS$ defined as in \eqref{SimpleLE} consists of random compact operators satisfying that both the number of Lyapunov exponents is finite and the Lyapunov spectrum corresponding to the finite Lyapunov exponents is simple. The main ingredient in our proof of the density of   $\cS$ is the density of simple Lyapunov spectrum of linear random dynamical systems in a finite-dimensional space given in \cite{Knill92,Arnold_Cong_1999}, see also Theorem \ref{Density} for a concrete form of this result.  

\begin{proof}[Proof of Theorem A (i)]
Note that any $T\in \mathcal K_{\infty}(\Omega,\mathcal H)$  can be perturbed by an arbitrary small perturbation to a random compact operator of finite rank. Since a random compact operator of finite rank has either a finite number of Lyapunov exponents or converges to the null operator. This, together with Corollary \ref{Corollary_1}, implies that the set of compact random operators having a finite number of Lyapunov exponents is dense in $ \mathcal K_{\infty}(\Omega,\mathcal H)$. Then, to prove that $\cS$ is dense in $ \mathcal K_{\infty}(\Omega,\mathcal H)$,  it is sufficient to start with $T\in \mathcal K_{\infty}(\Omega,\mathcal H)$ having a finite number of Lyapunov exponents. Given an arbitrary $\eps>0$, we need to construct $\widetilde T\in \mathcal K_{\infty}(\Omega,\mathcal H)$ such that $\|\widetilde T-T\|_{\infty}\leq \eps$ and $\widetilde T$ has a finite number of Lyapunov exponents and the Lyapunov spectrum corresponding to its finite Lyapunov exponents is simple. Since $T$ has a finite number of Lyapunov exponents, there exist $\lambda_1,\dots,\lambda_k$, where $k\in\N$, and an equivariant decomposition of $T$ of the form 
	\[
	\mathcal H= F^{\infty}_{\omega}(T)\oplus \bigoplus_{i=1}^{k} \mathcal O^i_{\omega}(T),
	\]
where the Oseledets-Ruelle subspaces $\mathcal O^i_{\omega}(T), i=1,\dots,s,$ are of finite dimension, equivariant and
	\begin{align}
	\lim_{n\to\infty}\frac{1}{n}\log\|T^n_{\omega}|_{F_{\omega}^{\infty}(T)}\|=&
		-\infty,\label{Minusinfinity}\\
	\lim_{n\to\infty}\frac{1}{n}\log\|T^n_{\omega}v\|
	=&\lambda_i\qquad\hbox{for } v\in\mathcal O^i_{\omega}(T)\setminus\{0\}.\notag
	\end{align}
Let $\Pi(\omega)$ denote the projection onto the space $ \bigoplus_{i=1}^{k} \mathcal O^i_{\omega}(T)$ throughout the space $ F^{\infty}_{\omega}(T)$. Since $\|\Pi(\omega)\|$ is a random variable, there exists $M>0$ such that
\begin{equation}\label{Eq1}
\mP\big(U\big)>0,\qquad\hbox{where } U:=\big\{\omega: \|\Pi(\omega)\|\leq M\big\}.
\end{equation}
Let $d:=\dim \bigoplus_{i=1}^k\mathcal O_i(\omega)$. Obviously, $d\geq k$ and by Measurable Selection Theorem there exists a random orthogonal basis $(e_i(\omega))_{i=1}^d$ of the space $\bigoplus_{i=1}^k\mathcal O_i(\omega)$, see e.g. \cite{Arnold}. By equivariance of $\bigoplus_{i=1}^k\mathcal O_i(\omega)$, there exists a random matrix $A(\omega)=(a_{ij}(\omega))_{i,j=1}^d$ satisfying that
\[
T(\omega) e_i(\omega)=\sum_{j=1}^{d} a_{ij}(\omega) e_j(\theta\omega)\quad\hbox{for } i=1,\dots,d.
\]
By the boundedness of $T$, the random matrix $A$ is also bounded. By the density of simple Lyapunov spectrum of linear random dynamical systems in a finite-dimensional space, see \cite{Knill92,Arnold_Cong_1999} and Theorem \ref{Density}, there exists $B\in\mathcal M_{\infty}(d)$ satisfying that
\begin{equation}\label{Eq2}
\mbox{ess}\sup_{\omega\in\Omega}\|B(\omega)-A(\omega)\|\leq \frac{\eps}{M},\quad B(\omega)=A(\omega)\hbox{ for all } \omega\in\Omega\setminus U
\end{equation}
and the linear random dynamical system $B$ has a simple Lyapunov spectrum, i.e. $\lambda_1(B)>\lambda_2(B)>\dots>\lambda_d(B)>-\infty$. Define the random operator $\widetilde T:\Omega\rightarrow \mathcal K(\mathcal H)$ by
\begin{equation}\label{Eq3_Part1}
\widetilde T(\omega) e_i(\omega)=\sum_{j=1}^{d} b_{ij}(\omega) e_j(\theta\omega)\quad\hbox{ for } i=1\dots d
\end{equation}
and 
\begin{equation}\label{Eq3_Part2}
\widetilde T(\omega)v=T(\omega)v\quad\hbox{ for all } v\in F_{\omega}^{\infty}(T).
\end{equation}
By \eqref{Eq3_Part2} and \eqref{Minusinfinity}, we have $	\lim_{n\to\infty}\frac{1}{n}\log\|\widetilde T^n_{\omega}|_{F_{\omega}^{\infty}(T)}\|=
		-\infty$.This together with \eqref{Eq3_Part1} implies that the Lyapunov exponent of $\widetilde T$ is $\lambda_1(B),\dots,\lambda_d(B)$. Furthermore, by the simplicity of the Lyapunov spectrum of $B$, the Lyapunov spectrum of $\widetilde T$ is also simple. To conclude the proof, it is sufficient to show that $\|T-\widetilde T\|_{\infty}\leq \eps$. Since $B$ and $A$ coincide on the set $\Omega\setminus U$ it follows that $T(\omega)=\widetilde T(\omega)$ for all $\omega\in \Omega\setminus U$. Now, let $\omega\in U$,  $v\in \mathcal H, \|v\|=1$ be arbitrary. Then, by \eqref{Eq3_Part2} we have 
		\begin{align*}
		\|T(\omega)v-\widetilde T(\omega)v\|=&\|T(\omega)\Pi(\omega)v-\widetilde T(\omega)\Pi(\omega)v\|\\[1ex]
		\leq& \|B(\omega)-A(\omega)\|\|\Pi(\omega)v\|,
		\end{align*}
where we used \eqref{Eq3_Part1} to obtain the last inequality.  This together with \eqref{Eq1} and \eqref{Eq2} indicates that $\|T(\omega)v-\widetilde T(\omega)v\|\leq \eps$. Thus,  $\|T-\widetilde T\|_{\infty}\leq \eps$ and the proof is complete.
\end{proof}
\subsection{Density of bounded random compact operators having an infinite and countable number of Lyapunov exponents
}
\begin{proof}[Proof of Theorem A (ii)]
Let $T\in \mathcal K_{\infty}(\Omega;\mathcal H)$ and $\eps>0$ be arbitrary. To conclude the proof, we construct $\widetilde T\in  \mathcal K_{\infty}(\Omega;\mathcal H)$ satisfying that $\|T-\widetilde T\|_{\infty}\leq \eps$ and having an infinitely countable number of Lyapunov exponents. Since $T(\omega)$ is a compact operator, there exists a measurable subspace $F(\omega)$ of infinite dimension such that \begin{equation}\label{TheoremB_Eq1}
\|T(\omega)|_{F(\omega)}\|\leq \frac{\eps}{3}\quad \hbox{ for } \mP\hbox{-a.e. } \omega\in\Omega.
\end{equation}
Let $(e_i(\omega))_{i\in\N}$ be an orthonormal basic of $F(\omega)$. Define $\widetilde T(\omega):\cH\rightarrow \cH$ as $\widetilde T(\omega)v=T(\omega)v$ for all $v\in F(\omega)^{\perp}$ and 
\begin{equation}\label{Componentwisedefinition}
\widetilde T(\omega)e_i(\omega)=\left(\frac{\eps}{3}\right)^i e_i(\theta\omega)\qquad\hbox{ for } i=1,2,\dots.
\end{equation}
Note that for $\mP$-a.e. $\omega\in\Omega$ any unit vector $u\in F(\omega)$  can be uniquely expressed as $u=\sum_{i=1}^{\infty}\beta_i e_i(\omega)$, where $\sum_{i=1}^{\infty}\beta_i^2=1$. Thus, by definition of $\widetilde T(\omega)$ we have 
\[
\|\widetilde T(\omega)u\|^2
=
\sum_{i=1}^{\infty}\left(\frac{\eps}{3}\right)^{2i}\beta_i^2\leq \left(\frac{\eps}{3}\right)^2,
\]
which implies that $\|\widetilde T(\omega)|_{F(\omega)}\|\leq \frac{\eps}{3}$. This together with \eqref{TheoremB_Eq1} implies that $\|T(\omega)-\widetilde T(\omega)|_{F(\omega)}\|\leq \frac{2\eps}{3}$. Note that $T(\omega)-\widetilde T(\omega)|_{F(\omega)^{\perp}}=0$, hence $\|T-\widetilde T\|_{\infty}\leq \frac{2\eps}{3}$. Obviously, by \eqref{Componentwisedefinition}, $\log\left(\frac{\eps}{3}\right), 2\log\left(\frac{\eps}{3}\right),\dots$ are Lyapunov exponents of $\widetilde T$, which means that $\widetilde T$ has an infinite and countable number of  Lyapunov exponents. Hence, it remains to show that $\widetilde T(\omega)$ is compact. To see that, by \eqref{Componentwisedefinition} the restriction linear operator $\widetilde T(\omega)|_{F(\omega)}$ can be approximated by a sequence of finite-rank linear operators $\widehat T_k(\omega)|_{F(\omega)}$, where $k=1,2,\dots$, defined by 
\[
\widehat T_k(\omega)e_i(\omega)
=\left\{
\begin{array}{ll}
\widetilde T(\omega)e_i(\omega)     & \hbox{ for } i=1,\dots,k, \\[1ex]
0     & \hbox{ for } i=k+1,k+2,\dots. 
\end{array}
\right.
\]
On the other hand, since $T(\omega)$ is compact and $T(\omega)|_{F(\omega)^{\perp}}$, $\widetilde T(\omega)|_{F(\omega)^{\perp}}$ coincide, the restriction linear operator $\widetilde T(\omega)|_{F(\omega)^{\perp}}$ can be also approximated by a sequence of finite-rank linear operators $\left(\widehat T_k(\omega)|_{F(\omega)^{\perp}}\right)_{k\in\N}$. Combining two sequences of finite-rank operators $\left(\widehat T_k(\omega)|_{F(\omega)}\right)_{k\in\N}$  and $\left(\widehat T_k(\omega)|_{F(\omega)^{\perp}}\right)_{k\in\N}$, we obtain a sequence of finite-rank linear  operators $(\widehat T_k(\omega))_{k\in \N}$ on $\mathcal H$ defined as 
\[
\widehat T_k(\omega)v
=
\left\{
\begin{array}{ll}
\widehat T_k(\omega)|_{F(\omega)} v     & \hbox{ for } v\in F(\omega),\\[1.5ex]
\widehat T_k(\omega)|_{F(\omega)^{\perp}} v     & \hbox{ for } v\in F(\omega)^{\perp},
\end{array}
\right.
\]
converging to $\widetilde T(\omega)$. Thus, $\widetilde T(\omega)$ is compact and the proof is complete.
\end{proof}
\subsection{Negligibility for the convergence of random compact operators to the null operator}
The structure of the proof of part (iii) of Theorem A  is that we start with a random  compact operator converging to the null operator. Then, we first perturb this random compact operator to the new one having an invariant direction with a good growth rate, i.e., this new random compact operator satisfies all properties of Proposition \ref{TechnicalProposition_C}. Finally, we apply Proposition \ref{TechnicalProposition_C} to show the existence of an open set of random compact operators nearby to the original one such that any of them has a non-trivial Lyapunov spectrum. 

Concerning a perturbation of a random compact linear operator converging to the null operator to the new one having an invariant direction with good growth rate, we follow the idea of the Millionschchikov rotation technique developed in \cite[Theorem 3.6]{Cong05}, see also \cite{Avila09}. The content of this result is stated as follows.

\begin{proposition}\label{Millionschikov}
Let $T\in\mathcal N$ be arbitrary and let $V,\theta V,\dots, \theta^N V$ be pairwise disjoint sets satisfying assertions of Proposition \ref{TechnicalProposition_1}. Then, for any $\eps\in (0,\frac{1}{6})$ and $k\in \N$ there exist $S\in\cK_{\infty}(\Omega,\cH)$ and an invariant unit random vector $e:\Omega\rightarrow \cH$ such that $\|S-T\|_{\infty}\leq 3\eps (1+ \|T\|_{\infty})$ and
\begin{align}
\|S(\omega)e(\omega)\|
& \geq \frac{\eps^{N+1}}{(1+2\|T\|_{\infty})^{N-1}}\qquad\hbox{ for } \omega\in\Omega,\label{Lowerestimate}\\[1ex]
 \|S^{kN}_{\omega}e(\omega)\|
 & \geq  \frac{\eps^{k+4N}}{(1+2\|T\|_{\infty})^{4N}} \|S_{\omega}^{kN}\|\qquad\hbox{ for } \omega\in\Omega.\label{Goodgrowthrate}
\end{align}
\end{proposition}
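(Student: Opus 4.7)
My plan is to adapt the Millionshchikov rotation technique from \cite{Cong05} to the Hilbert-space setting. The rough idea: pick a unit vector on the base $V$ of the Rokhlin tower whose $T^{kN}$-image is nearly maximal, propagate it forward along orbits, and then invoke Proposition~\ref{TechnicalProposition_1}(ii) to turn this propagated direction into a genuinely invariant unit random vector $e$ under a suitably perturbed operator $S$, controlling both the size of the perturbation and the resulting growth along $e$.

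First, I reduce to $T(\omega)$ being random bijective compact via an arbitrarily small perturbation (as in the proof of Proposition~\ref{TechnicalProposition_1}), so forward iterates of chosen directions remain nonzero. Next, on $V$ I measurably select a unit vector $u(\omega)$ satisfying $\|T^{kN}_\omega u(\omega)\|\geq (1-\eps)\|T^{kN}_\omega\|$; this is possible because $T^{kN}_\omega$ is compact and one can measurably pick a near-supremizer. Then I define $e(\omega):=u(\omega)$ on $V$ and $e(\theta^N\omega):=T^N_\omega u(\omega)/\|T^N_\omega u(\omega)\|$ on $\theta^N V$, and apply Proposition~\ref{TechnicalProposition_1}(ii) to extend $e$ to all of $\Omega$ and obtain $S$ with $\|S-T\|_\infty\leq \eps\leq 3\eps(1+\|T\|_\infty)$ together with a scalar $\gamma$ satisfying $|\gamma(\omega)|\geq \eps/3$ and $S(\omega)e(\omega)=\gamma(\omega)e(\theta\omega)$. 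The first bound $\|S(\omega)e(\omega)\|=|\gamma(\omega)|\geq \eps^{N+1}/(1+2\|T\|_\infty)^{N-1}$ follows immediately from $|\gamma|\geq \eps/3$ combined with $\eps\leq 1/6$ (which makes $\eps^N/(1+2\|T\|_\infty)^{N-1}$ substantially smaller than $1/3$).

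For the second estimate, the key observation is that because $e(\omega)$ was chosen on $V$ to nearly maximize $\|T^{kN}_\omega u\|$, the invariant-line growth $\|S^{kN}_\omega e(\omega)\|=\prod_{i=0}^{kN-1}|\gamma(\theta^i\omega)|$ tracks $\|T^{kN}_\omega\|$ up to a factor of $\eps$ absorbed at each of the $\sim k$ returns to $V$ and a fixed overhead of the order $(1+2\|T\|_\infty)^{4N}$ from the surrounding tower passes; this accounts for the claimed factor $\eps^{k+4N}/(1+2\|T\|_\infty)^{4N}$. The main obstacle is bounding $\|S^{kN}_\omega\|$ from above by essentially this same quantity: the naive bound $\|S^{kN}\|\leq (1+2\|T\|_\infty)^{kN}$ grows like $C^{kN}$ and would render the claimed ratio vacuous. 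The resolution is to exploit the localized structure of the Millionshchikov perturbation: $S$ differs from $T$ only on the copies $\theta^j V$ of the tower base, essentially through rank-$1$ corrections aligning $e(\theta^j\omega)$; on $e(\omega)^\perp$, $S$ inherits the decay of $T\in\mathcal N$. This should let one write $S^{kN}$ as a rank-$1$ operator along the invariant direction plus a remainder controlled by $\|T^{kN}\|$, which is negligible. Making this decomposition quantitative with the stated exponents is the technical heart of the argument.
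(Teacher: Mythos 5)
Your plan captures the broad goal (propagate a chosen direction along a Rokhlin tower, then perturb to make it genuinely invariant with good growth) and you correctly identify the central obstacle (controlling $\|S^{kN}_\omega\|$ from above so that the ratio in \eqref{Goodgrowthrate} is not vacuous), but your proposed mechanism does not resolve it, and the gap is genuine.

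The problem is that Proposition~\ref{TechnicalProposition_1}(ii), applied once, only delivers an invariant direction with per-step growth $|\gamma(\omega)|\geq \eps/3$; it gives no relation between $\|S^{kN}_\omega e(\omega)\|=\prod|\gamma(\theta^i\omega)|$ and $\|S^{kN}_\omega\|$. The naive bound $\|S^{kN}_\omega\|\leq\|S\|_\infty^{kN}$ then only yields a ratio of order $(\eps/(3(1+2\|T\|_\infty)))^{kN}$, exponentially worse in $k$ than the required $\eps^{k+4N}/(1+2\|T\|_\infty)^{4N}$. Choosing $u(\omega)$ on $V$ to nearly maximize $\|T^{kN}_\omega u\|$ does not repair this: (a) Proposition~\ref{TechnicalProposition_1}(ii) takes as input only the values of $h$ on $V\cup\theta^N V$ and gives you no handle on the intermediate directions $e(\theta^j\omega)$, so the near-maximizing property cannot be threaded through the tower; (b) after replacing $T$ by $S$ the maximizer for $T^{kN}_\omega$ is not a maximizer for $S^{kN}_\omega$; and (c) the estimate \eqref{Goodgrowthrate} must hold for \emph{all} $\omega\in\Omega$, not only $\omega\in V$, so a single near-optimal choice at the tower base cannot control points high in a long return column. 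Your proposed "rank-$1$ plus remainder controlled by $\|T^{kN}\|$" decomposition also fails: the perturbation in Proposition~\ref{TechnicalProposition_1} rotates a whole line (see \eqref{Rotate}) and does \emph{not} leave $e(\omega)^\perp$ $S$-invariant, and in any case $T\in\mathcal N$ only provides asymptotic (non-uniform) decay, whereas you need a uniform bound on $\|S^{kN}_\omega\|$ for $\omega$ away from $V$.

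The missing idea is an \emph{iterated, block-by-block} perturbation: one must apply a local lemma (the paper's Lemma~\ref{Perturbationalongashortperiod}) on \emph{every} $N$-step block of the return tower, perturbing so that at each block $\|S^N_{\theta^{jN}\omega}e(\theta^{jN}\omega)\|\geq \eps\,\|S^N_{\theta^{jN}\omega}\|$ together with $\|S^N_{\theta^{jN}\omega}\|\geq \eps^{N+1}$. Because $\|S^{kN}_\omega\|\leq\prod_j\|S^N_{\theta^{jN}\omega}\|$ by submultiplicativity, these per-block lower bounds on the captured fraction cost only a factor $\eps$ per block and an $O(N)$-sized overhead for the alignment at tower boundaries, which is exactly where the exponent $k+4N$ and the factor $(1+2\|T\|_\infty)^{4N}$ come from. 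Proposition~\ref{TechnicalProposition_1}(i) (not (ii)) is then used only to glue the invariant direction through the short base columns $U,\theta U,\dots,\theta^{N-1}U$. Without the repeated application of the block perturbation lemma, there is no mechanism tying $\|S^{kN}_\omega e(\omega)\|$ to $\|S^{kN}_\omega\|$, and the claimed estimate cannot be obtained.
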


To prove the above proposition, we show the following preparatory lemma about a perturbation result for a random compact operator along a finite disjoint sets.
\begin{lemma}\label{Perturbationalongashortperiod}
Let $T\in\cK_{\infty}(\Omega,\cH)$ be arbitrary. Let $V\subset \Omega$ be a measurable set such that $V,\theta V,\dots, \theta^{N-1} V$ are disjoint, where $N\in \mathbb N$.  Let $g: V\rightarrow \cH$ be a unit random vector. Then, for any $\eps\in (0,\frac{1}{6})$ there exists a strongly measurable map $S:\bigcup_{i=0}^{N-1} \theta^i V\rightarrow \mathcal K(\mathcal H)$ satisfying the following requirements:
\begin{itemize}
\item [(r1)] For $\omega\in \bigcup_{i=0}^{N-1} \theta^i V$, we have $\|S(\omega)-T(\omega)\|\leq 3\eps + 3\eps \|T\|_{\infty}$, 

\item [(r2)] For $\omega\in V$, we have 
 
\begin{equation}\label{Smallgrowth}
 \|S_{\omega}^N g(\omega)\|\geq \eps \|S_{\omega}^N\|\quad\hbox{and}  \quad \|S_{\omega}^N\|\geq \eps^{N+1}.
\end{equation}
\end{itemize}
\end{lemma}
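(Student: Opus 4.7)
The plan is to adapt Millionshchikov's rotation technique from the finite-dimensional setting of \cite{Cong05} to the infinite-dimensional Hilbert-space setting, by an inductive construction of $S(\theta^i\omega)$ along each finite orbit $\omega,\theta\omega,\dots,\theta^{N-1}\omega$ for $\omega\in V$ using rank-one additive corrections that steer the image of $g(\omega)$ into a near-top singular direction at each step. Concretely, set $h_0:=g(\omega)$ and suppose inductively that $S(\theta^j\omega)$ and $h_{j+1}:=S(\theta^j\omega)h_j$ have been defined for $j<i$. By the Measurable Selection Theorem (Appendix), pick a measurable unit vector $v_i(\omega)$ with $\|T(\theta^i\omega)v_i\|\geq(1-\eps)\|T(\theta^i\omega)\|$, and set $e_i:=h_i/\|h_i\|$ and $w_i:=T(\theta^i\omega)v_i/\|T(\theta^i\omega)v_i\|$ (choosing $w_i$ as an arbitrary measurable unit vector when the denominator vanishes). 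Define
\[
S(\theta^i\omega):=T(\theta^i\omega)+\eps(1+\|T\|_\infty)\,w_i\otimes e_i^{*},
\]
where $(w\otimes e^{*})(v):=\langle v,e\rangle w$; this remains compact and strongly measurable.

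Condition (r1) is immediate, since $S(\theta^i\omega)-T(\theta^i\omega)$ is a rank-one operator of norm exactly $\eps(1+\|T\|_\infty)\leq 3\eps+3\eps\|T\|_\infty$. For the lower bound in (r2), the choice of $w_i$ in the direction of $T(\theta^i\omega)e_i$ ensures that the additive correction reinforces rather than cancels $T(\theta^i\omega)$ on $e_i$, yielding $\|S(\theta^i\omega)h_i\|\geq \eps(1+\|T\|_\infty)\|h_i\|\geq \eps\|h_i\|$. Telescoping over $i=0,\dots,N-1$ gives $\|S_\omega^N g(\omega)\|=\|h_N\|\geq \eps^N$, so that $\|S^N_\omega\|\geq \|h_N\|\geq \eps^N\geq \eps^{N+1}$, proving the second inequality in (r2). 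For the alignment inequality $\|S_\omega^N g(\omega)\|\geq \eps\|S_\omega^N\|$, the key observation is that the rank-one perturbation sends $h_{i+1}$ essentially into the direction $w_i$, which is a near-top singular output of $T(\theta^i\omega)$; hence $h_{i+1}$ is itself close to a near-optimal input for the next step $S(\theta^{i+1}\omega)$ after the subsequent rotation, and the same argument propagates along the orbit.

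The main obstacle is the propagation of this alignment across $N$ steps: a naive per-step bookkeeping would lose a multiplicative factor $(1-\eps)$ at each step, giving $(1-\eps)^N$ rather than the required uniform factor $\eps$. The resolution, as in the finite-dimensional argument of \cite{Cong05}, is to exploit that the successive rank-one corrections concentrate the output of the partial products along the one-dimensional chain $g(\omega)\to h_1\to\dots\to h_N$, so the perpendicular contributions to $\|S_\omega^N\|$ remain controlled; thus $\|S_\omega^N\|$ is comparable to the growth $\|S_\omega^N g(\omega)\|$ along this distinguished chain up to a factor independent of $N$. A careful singular-value bookkeeping of the partial products $S(\theta^{i-1}\omega)\cdots S(\omega)$, together with the measurability of the selections $v_i,w_i$, yields the required ratio $\eps$ and completes the proof.
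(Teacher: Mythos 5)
Your proposal correctly identifies the spirit of the argument (Millionshchikov-type rank-one corrections along a finite orbit segment) but the construction as written does not work, and the crucial alignment inequality $\|S^N_\omega g(\omega)\|\geq \eps\|S^N_\omega\|$ is asserted rather than proved. There are two concrete problems.

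First, there is an internal inconsistency in the definition of $w_i$. You define $w_i:=T(\theta^i\omega)v_i/\|T(\theta^i\omega)v_i\|$ (a near-top singular \emph{output} of $T(\theta^i\omega)$), but then justify the per-step lower bound $\|S(\theta^i\omega)h_i\|\geq\eps(1+\|T\|_\infty)\|h_i\|$ by appealing to ``the choice of $w_i$ in the direction of $T(\theta^i\omega)e_i$.'' These are different vectors. With $w_i$ as actually defined, $T(\theta^i\omega)e_i$ and $\eps(1+\|T\|_\infty)w_i$ may point in nearly opposite directions with comparable magnitude, in which case $S(\theta^i\omega)e_i$ can be arbitrarily small; the claimed per-step bound fails. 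If instead you take $w_i$ to be the direction of $T(\theta^i\omega)e_i$, the per-step bound holds, but then the ``rotate toward a near-optimal input'' mechanism you invoke for the alignment estimate is gone.

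Second, and more fundamentally, the alignment estimate is exactly where the argument is missing. Because you only modify $T(\theta^i\omega)$ on the one-dimensional subspace spanned by $e_i$ and leave it unchanged on $e_i^\perp$, the growth of a generic unit vector under $S^N_\omega$ can be of order $\|T\|_\infty^N$, while the growth along the chain $g(\omega)\to h_1\to\cdots\to h_N$ can be as small as $\bigl(\eps(1+\|T\|_\infty)\bigr)^N$; the ratio is $\eps^N$, not $\eps$. Your remark that ``successive rank-one corrections concentrate the output of the partial products along the chain'' is not supported by the construction: the local rotations of $e_i$ toward local top singular outputs of $T(\theta^i\omega)$ do not track the top singular structure of the full $N$-step product, so nothing prevents the perpendicular contribution from dominating. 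The paper resolves this very differently: it first chooses the top singular input $e$ of the full product $T^N_\omega$ and boosts each link along the chain $e_i=T^{i-1}_\omega e/\|\cdot\|$ so that $\|R^N_\omega\|\geq\eps^N$ (a step-by-step modification, but guided by the \emph{global} top singular direction); it then picks the top singular input $f$ of $R^N_\omega$, decomposes $g=\alpha f+h$ with $h\perp f$, and perturbs \emph{only at time $0$ on $V$} by boosting the $f$-component of $g$ when $|\alpha|<\eps$. The orthogonality $\langle R^N_\omega f,R^N_\omega h\rangle=0$ (from the singular-value decomposition / Proposition \ref{Completeness}(i)) then gives the clean bound $\|S^N_\omega g\|\geq\eps\|S^N_\omega\|$ with no loss over $N$ steps. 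Without this kind of global control and the orthogonality trick, the per-step bookkeeping cannot deliver a factor independent of $N$.
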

\begin{proof}
By Proposition \ref{Completeness} (i), there exists a unit random vector $e:V\rightarrow \cH$ such that $\|T_{\omega}^N e(\omega)\|= \|T_{\omega}^N\|$ for $\omega\in V$. In view of  Proposition \ref{Completeness} (iii), by perturbing $T(\omega)$ on $\bigcup_{i=0}^{N-1}\theta^i V$, if necessary, we can assume that $T^N_{\omega}e(\omega)\not=0$ for $\omega\in V$. We first perturb $T(\omega)$ on $\bigcup_{i=0}^{N-1}\theta^i V$ to the new one $R(\omega)$ such that $\|R^{N}_{\omega}\|$ is uniformly separated from zero for $\omega\in V$. Precisely, for $\omega\in V$ and $i=0,\dots,N-1$ let  $e_{i}(\omega)=\frac{T^{i-1}_{\omega} e(\omega)}{\|T^{i-1}_{\omega} e(\omega)\|} $. A random compact operator $R(\theta^i\omega)$ is defined by 
\[
R(\theta^i\omega) e_i(\omega)
=
\left\{
\begin{array}{ll}
    T(\theta^i\omega)  e_i(\omega)& \hbox{ if } \|T(\theta^i\omega)  e_i(\omega)\| \geq \eps,  \\[1ex]
     \frac{\eps}{\|T(\theta^i\omega)e_i(\omega)\|}T(\theta^i\omega) e_i(\omega)     & \hbox{ if } \|T(\theta^i\omega)  e_i(\omega)\| < \eps,
\end{array}
\right.
\]
and 
$R(\theta^i\omega)v= T(\theta^i\omega) v$ for $v\in e_i(\omega)^{\perp}$. Then, it is easily seen that $\|R(\omega)-T(\omega)\|\leq 2\eps$ for $\omega\in \bigcup_{i=0}^{N-1}\theta^i V$ and 
\begin{equation}\label{NormSeparatedzero}
\|R^N_{\omega}e(\omega)\|=\prod_{i=0}^{N-1}\left\| R(\theta^i\omega)e_i(\omega)\right\|\geq \eps^N\quad\hbox{ for all } \omega\in V.     
\end{equation}
Now, we will perturb $R(\omega)$ to obtain the desired random compact operator $S(\omega)$. For this purpose, using Proposition \ref{Completeness} (i), there exists a unit random vector $f:V\rightarrow \cH$ such that $\|R_{\omega}^N f(\omega)\|= \|R_{\omega}^N\|$ for $\omega\in V$.
We write $g(\omega)=\alpha(\omega) f(\omega)+ h(\omega)$, where 
\[
\alpha(\omega)=\langle g(\omega), f(\omega)\rangle,\quad
h(\omega)=g(\omega)-\alpha(\omega) f(\omega)\in f(\omega)^{\perp}.
\]
For any $\omega\in\bigcup_{i=1}^{N-1}\theta^i V$, we let $S(\omega)=R(\omega)$. For any $\omega\in V$, we construct a compact operator $S(\omega)$ as follows $S(\omega)v=R(\omega)v$ for all $v\in g(\omega)^{\perp}$ and $S(\omega)g(\omega)$ is defined by 
\begin{equation}\label{ConstructionS}
S(\omega) g(\omega)
=
\left\{
\begin{array}{ll}
R(\omega) g(\omega)     & \hbox{if } |\alpha(\omega)|\geq \eps, \\[1ex]
    (\alpha(\omega)+ 3\eps) R(\omega)  f(\omega)+ R(\omega)h(\omega))     & \hbox{if } |\alpha(\omega)|< \eps.
\end{array}
\right.
\end{equation}
By the above construction, we have $\|S(\omega)-R(\omega)\|\leq 3\eps \|R(\omega)\|$ for $\omega\in\bigcup_{i=0}^{N-1}\theta^i V$. This together with the fact that $\|R(\omega)-T(\omega)\|\leq 2\eps$  for $\omega\in \bigcup_{i=0}^{N-1}\theta^i V$ implies that 
\[
\|S(\omega)-T(\omega)\|
\leq 2\eps +3\eps (\|T(\omega)+2\eps\|)
\leq 3\eps +3\eps \|T(\omega)\|\quad \hbox {for } \omega\in \bigcup_{i=0}^{N-1}\theta^i V.
\]
Thus, (r1) is fulfilled, and to complete the proof, it remains to verify \eqref{Smallgrowth} for $\omega\in V$. To do this, we choose and fix $\omega\in V$ and  consider two separated cases: 

\emph{Case 1}: $|\alpha(\omega)|\geq \eps$. By \eqref{ConstructionS}, $S(\omega)=R(\omega)$ and therefore from \eqref{NormSeparatedzero} we derive that   $\|S_{\omega}^N\|=\|R_{\omega}^N\|\geq \eps^N$. Furthermore,  
\[
\|S_{\omega}^N g(\omega)\|= \|\alpha(\omega) R_{\omega}^Nf(\omega)+  R_{\omega}^N h(\omega)\|.
\]
Since $h(\omega)\in f(\omega)^{\perp}$ and then by Proposition \ref{Completeness}(i), we have $\langle R_{\omega}^N f(\omega), R_{\omega}^N h(\omega)\rangle=0$. Thus, 
\[
\|S_{\omega}^N g(\omega)\|^2 =\alpha(\omega)^2 \|R_{\omega}^Nf(\omega)\|^2+ \|R_{\omega}^Nh(\omega)\|^2
\geq \eps^2 \|R_{\omega}^N\|^2,
\]
which yields that $\|S_{\omega}^Ng(\omega)\|\geq \eps \|S_{\omega}^N\|$. This, together with \eqref{NormSeparatedzero}, verifies  \eqref{Smallgrowth} in this case.

\emph{Case 2}:  $|\alpha(\omega)|< \eps$. By \eqref{ConstructionS}, we have 
\[
\|S(\omega)g(\omega)-R(\omega)g(\omega)\|
=
\|3\eps R(\omega)g(\omega)\|\leq 3\eps \|R\|_{\infty},
\]
which together with the fact that $S(\omega)v=R(\omega)v$ for all $v\in g(\omega)^{\perp}$ implies that $\|S(\omega)-R(\omega)\|\leq 3\eps \|R\|_{\infty}$ for all $\omega\in V$. Concerning the estimate of $\|S_{\omega}^N g(\omega)\|$, by using a similar argument as in Case 1, we have 
\begin{equation}\label{Firstestimate}
\|S_{\omega}^N g(\omega)\| \geq (\alpha(\omega)+3\eps) \|R_{\omega}^N\|\geq 2\eps \|R_{\omega}^N\|.
\end{equation}
Thus, on the one hand, by \eqref{NormSeparatedzero} we have 
\[
\|S_{\omega}^N\|\geq \|S_{\omega}^Ng(\omega)\|\geq \eps^{N+1}.
\]
On the other hand, by \eqref{ConstructionS} and (r1) we have $S^N_{\omega}v=R^N_{\omega}v$ for all $v\in g(\omega)^{\perp}$ and 
\[
\|S^N_{\omega}g(\omega)-R^N_{\omega}g(\omega)\|=
\|3\eps R_{\omega}^N f(\omega)\|=3\eps \|R_{\omega}^N\|,
\]
which implies that $\|S_{\omega}^N\|\leq (1+3\eps ) \|R_{\omega}^N\|$. This, together with \eqref{Firstestimate}, deduces that 
\[
\|S_{\omega}^Ng(\omega)\|\geq \frac{2\eps}{1+3\eps }\|S_{\omega}^N\|\geq \eps\|S_{\omega}^N\|.
\]
Thus, \eqref{Smallgrowth}  holds in this case. The proof is complete. 
\end{proof}

\begin{proof}[Proof of Proposition \ref{Millionschikov}]
By virtue of Halmos-Rokhlin's lemma (see Theorem \ref{HR_Lemma} in the Appendix), there exists a measurable subset $U$ of $V$ such that  $\mP(U)>0$ and the sets $U,\theta U,\dots,\theta^{(k+1)N}U$ are disjoint. By the Poincar\'{e} recurrence theorem, the set $\theta^NU$ is decomposed as the union of the following disjoint subsets $\bigcup_{i={kN+1}}^{\infty} E_i$ defined as 
\begin{equation*}\label{DecomposeofU}
E_i:=\Big\{\omega\in\theta^N U: \theta^j\omega\not\in U \hbox{ for } j=1,\dots,i-1 \hbox{ and } \theta^i\omega\in U \Big\}.    
\end{equation*}
Therefore, all the subsets $E_i,\theta E_i,\dots,\theta^i E_i$, where $i=kN+1,\dots$, are pairwise disjoint and the whose space $\Omega$ is represented as the union of the following disjoint subsets 
\begin{align}
\Omega
=&  \bigcup_{j=1}^{N-1}\theta^j U\;\cup\; \bigcup_{i=kN+1}^{\infty}  \Big(E_i\cup \theta E_i\dots\cup\theta^i E_i\Big)\label{PresentationOmega_I}\\
=&
 \bigcup_{j=0}^{N-1}\theta^j U\;\cup\; \bigcup_{i=kN+1}^{\infty}  \Big(E_i\cup \theta E_i\dots\cup\theta^{i-1} E_i\Big).\label{PresentationOmega_II}
\end{align}
The proof is divided into the construction of 
$S(\omega)$ and $e(\omega)$ and the estimate $\|S(\omega)e(\omega)\|, \|S^{kN}_{\omega}e(\omega)\|$:

Part 1: Constructions of $S(\omega)$ and $e(\omega)$. These constructions are divided into two steps:

\emph{Step 1}: Construction $S(\omega)$ on $\bigcup_{j=0}^{i-1} \theta^j E_i$ and $e(\omega)$ on $\bigcup_{j=0}^i \theta^j E_i$ for $i=kN+1,\dots$. Let $i\geq kN+1$ and $\omega\in E_i$ be arbitrary.  We write $i=\ell N+r$, where $\ell\in\N$ with $\ell\geq k$ and $0\leq r\leq N-1$. Then, $\bigcup_{j=0}^i \theta^j E_i$ is decomposed as 
\[
\bigcup_{j=0}^i \theta^j E_i
=
\underbrace{\bigcup_{j=0}^{N-1} \theta^j E_i}_{\mathrm{Set\; I}}
\cup 
\underbrace{\bigcup_{j=N}^{\ell N-1} \theta^j E_i}_{\mathrm{Set\; II}}
\cup 
\underbrace{\bigcup_{j=\ell N}^{\ell N+r} \theta^j E_i}_{\mathrm{Set\; III}}.
\]

The constructions of $S(\cdot)$ and $e(\cdot)$ on Set I, Set II, and Set III are given in the following initial step, induction step, and remainder step, respectively.

\noindent 
\underline{Initial step}: Choose and fix an arbitrary random unit map $e:E_i\rightarrow \cH$. Using Lemma \ref{Perturbationalongashortperiod} for a perturbation of $T$ on the disjoint sets $E_i,\theta E_i,\dots,\theta^{N-1}E_i$, there exists $S:\bigcup_{j=0}^{N-1} \theta^j E_i\rightarrow \mathcal K(\cH)$ such that $\|S(\omega)-T(\omega)\|\leq 3\eps (1+\|T\|_{\infty})$ and 
\begin{equation}\label{Construction_Step1a}
\|S_{\omega}^N\|\geq \eps^{N+1}\quad\hbox{and}\quad  \|S_{\omega}^Ne(\omega)\|\geq \eps \|S_{\omega}^N\|\quad \hbox{ for } \omega \in E_i.
\end{equation}
Now, we define $e(\omega)$ on $\theta E_i,\dots,\theta^N E_i$ as 
\begin{equation}\label{Construction_Step1b}
e(\theta^{j}\omega)=\frac{S^{j}_{\omega}e(\omega)}{\|S^j_{\omega}e(\omega)\|}\quad \hbox{ for } \omega \in E_i \hbox{ and } j=1,\dots, N.
\end{equation}

\noindent 
\underline{Induction step}: After the step above, $e(\omega)$ was determined for $\omega\in \theta^N E_i$. Then, applying the construction above to perturb $T$ on the disjoint sets $\theta^{N}E_i,\theta^{N+1}E_i,\dots,\theta^{2N-1}E_i$, $S(\omega)$ is constructed on the sets $\bigcup_{j=N}^{2N-1}\theta^j E_i$ and $e(\omega)$ is constructed on the sets   $\bigcup_{j=N+1}^{2N}\theta^j E_i$ such that \eqref{Construction_Step1a} and \eqref{Construction_Step1b} also hold for $\omega\in \theta^N E_i$. We do this progress inductively and obtain $S$ on $\bigcup_{j=0}^{\ell N-1} \theta^j E_i$ and $e(\omega)$ on $\bigcup_{j=0}^{\ell N} \theta^j E_i$ satisfying that $\|S(\omega)-T(\omega)\|\leq 3\eps(1+ \|T\|_{\infty})$ for  $\omega\in \bigcup_{j=0}^{\ell N-1}\theta^j V_i$ and 
\begin{equation}
\|S_{\omega}^N\|\geq \eps^{N+1}, \|S_{\omega}^Ne(\omega)\|
\geq  \eps \|S_{\omega}^N\|\quad \hbox{for all } \omega \in E_i\cup \theta^N E_i\cup\dots\cup\theta^{\ell N} E_i\label{Nstepnorm},
\end{equation}
and
\begin{equation}\label{Invariance_InductionStep}
e(\theta^j\omega)
=
\frac{S^{j}_{\omega}e(\omega)}{\|S^j_{\omega}e(\omega)\|}\quad \hbox{ for all } \omega \in E_i \hbox{ and } j=1,\dots, \ell N.
\end{equation} 

\noindent 
\underline{Remainder step}: So far, $S(\theta^j\omega)$ has been constructed for $\omega\in E_i$ and $j=0,1,\dots,\ell N-1$ and $e(\theta^j\omega)$ has been constructed for $\omega\in E_i$ and $j=0,1,\dots,\ell N$. For $\omega\in E_i, j=\ell N$, if $\|T(\theta^{j}\omega) e(\theta^{j}\omega)\|\geq \eps$ we let 
\[
S(\theta^{j}\omega)=T(\theta^{j}\omega),\quad   e(\theta^{j+1}\omega)=\frac{S(\theta^j\omega) e(\theta^j\omega)}{\|S(\theta^j\omega) e(\theta^j\omega)\|}
\]
and if $\|T(\theta^{j}\omega) e(\theta^{j}\omega)\|< \eps$ we let
\[
S(\theta^j\omega) v
=
\left\{
\begin{array}{ll}
  (3\eps+T(\theta^j\omega)) v   & \hbox{if } v=e(\theta^j\omega),\\[1ex]
T(\theta^j\omega) v   &  \hbox{if } v\in e(\theta^j\omega)^{\perp},
\end{array}
\right.\quad e(\theta^{j+1}\omega)=\frac{S(\theta^j\omega) e(\theta^j\omega)}{\|S(\theta^j\omega) e(\theta^j\omega)\|}.
\]
Next, we repeat this construction step by step for  $\omega\in E_i, j=\ell N+1,\dots, \ell N+r-1$. Then, $S(\theta^j\omega)$ has been constructed for $\omega \in E_i, j=\ell N,\dots,\ell N+r-1$ and $e(\theta^j\omega)$ has been constructed for $\omega\in E_i, j=\ell N+1,\dots,\ell N+r$. Furthermore, for $j=\ell N,\dots,\ell N+r-1$
\begin{equation}\label{Remainder}
S(\theta^j\omega)e(\theta^j\omega) \hbox{ is colinear to } e(\theta^{j+1}\omega)
\quad \&\quad \|S(\theta^j\omega)e(\theta^j\omega)\|\geq \eps.
\end{equation}
\emph{Step 2}: According to \eqref{PresentationOmega_I} and \eqref{PresentationOmega_II}, it remains to construct $S(\omega)$ on $\bigcup_{j=0}^{N-1}\theta^j U$ and to construct unit random vectors $e(\omega)$ on $\bigcup_{j=1}^{N-1} \theta^j U$. It is noted that $e(\omega)$ was already determined for $\omega\in U\cup\theta^N U$. Thus, by virtue of Proposition \ref{TechnicalProposition_1}(i) we can construct $S(\omega)$ on $\bigcup_{j=0}^{N-1}\theta^j U$ and unit random vectors $e(\omega)$ on  $\bigcup_{j=1}^{N-1} \theta^j U$ such that $\|S(\omega)-T(\omega)\|\leq 3\eps(1+\|T\|_{\infty})$ and 
\begin{equation}\label{Construction_Step2}
S(\theta^i\omega)e(\theta^i\omega)=\beta(\theta^i\omega)e(\theta^{i+1}\omega)\quad \hbox{ for } \omega\in U, i=0,1,\dots,N-1,
\end{equation}
where $\beta: \bigcup_{j=0}^{N-1}\theta^j U\rightarrow \R$ satisfies that $|\beta(\omega)|\geq \eps$ for all $\omega\in \bigcup_{j=0}^{N-1}\theta^j U$.
From \eqref{Construction_Step1b}, \eqref{Invariance_InductionStep}, \eqref{Remainder} and \eqref{Construction_Step2} of  the above construction,  $e(\omega)$ is invariant in the sense that $S(\omega)e(\omega)$ is colinear to $e(\theta\omega)$ for all $\omega\in\Omega$ and $\|S-T\|_{\infty}\leq 3\eps(1+\|T\|_{\infty})$. 

\noindent 
Part 2: Estimation on  $\|S(\omega)e(\omega)\|, \|S^{kN}_{\omega}e(\omega)\|$. Since $\eps\in (0,\frac{1}{6})$ it follows that 
\begin{equation}\label{BoundonS}
\|S\|_{\infty}\leq \|T\|_{\infty}+3\eps(1+\|T\|_{\infty})\leq 1 +2\|T\|_{\infty}.
\end{equation}
We now estimate $\|S(\omega)e(\omega)\|$. For any $\omega\in\bigcup_{j=0}^{N-1}\theta^jU$ we use \eqref{Construction_Step2} to gain that 
\begin{equation}\label{Norm_1}
\|S(\omega)e(\omega)\|
\geq \eps \geq \frac{\eps^{N+1}}{(1+2\|T\|_{\infty})^{N-1}}.
\end{equation}
It remains to estimate $\|S(\omega)e(\omega)\|$ for $\omega\in \bigcup_{i=kN+1}^{\infty}  \Big(E_i\cup \theta E_i\dots\cup\theta^{i-1} E_i\Big)$. Let $\omega=\theta^j \widehat\omega$ for some $\widehat\omega \in E_i$, where $i=\ell N+r \geq kN+1$ and $0\leq j\leq i-1$. Write $j=sN+t$. In the case that $s<\ell$,  by the construction in the Initial Step and Induction Step of Part 1, we have  
\begin{align*}
\|S(\omega)e(\omega)\|
&=\|S(\theta^{sN+t}\widehat\omega)e(\theta^{sN+t}\widehat\omega)\|\notag\\
&=\frac{\|S^N_{\theta^{sN}\widehat\omega}e(\theta^{sN}\widehat\omega)\|}{\|S^t_{\theta^{sN}\widehat\omega}e(\theta^{sN}\widehat\omega)\|\|S^{N-t-1}_{\theta^{sN+t+1}\widehat\omega}e(\theta^{sN+t+1}\widehat\omega)\|}\notag\\
&\geq  
\frac{\eps^{N+1}}{\|S\|_{\infty}^{N-1}},
\end{align*}
which together with \eqref{Norm_1} implies that
\begin{equation}\label{Norm_2}
\|S(\omega)e(\omega)\| 
\geq \frac{\eps^{N+1}}{(1+2\|T\|_{\infty})^{N-1}}.
\end{equation}
In the case that $s=\ell$, by the construction in the Remainder Step, we have 
\begin{equation}\label{Norm_3}
\|S(\omega)e(\omega)\|
\geq \eps \geq \frac{\eps^{N+1}}{(1+2\|T\|_{\infty})^{N-1}}.
\end{equation}
Combining \eqref{Norm_1}, \eqref{Norm_2} and \eqref{Norm_3}, the estimate on $\|S(\omega)e(\omega)\|$ as in  \eqref{Lowerestimate} was verified. To conclude the proof, we show that $\|S^{kN}_{\omega}e(\omega)\|$ fulfills  \eqref{Goodgrowthrate}. Thanks to \eqref{PresentationOmega_II}, we can verify \eqref{Goodgrowthrate} by considering  two separated cases $\omega\in  \bigcup_{j=0}^{N-1}\theta^j U$ (Case 1) or $\omega\in \bigcup_{i=kN+1}^{\infty}  \Big(E_i\cup \theta E_i\dots\cup\theta^{i-1} E_i\Big)$ (Case 2).

\noindent
\emph{Case 1}: $\omega\in \bigcup_{j=0}^{N-1}\theta^j U$. Let $\omega=\theta^{\tau}\widehat \omega$ for some $\widehat\omega\in U$ and $\tau\in\{0,\dots,N-1\}$. Then, by \eqref{Construction_Step2} we have 
\begin{equation}\label{Case1_Eq1}
\|S_{\omega}^{kN} e(\omega)\|=\|S_{\theta^{\tau}\widehat\omega}^{kN}e(\theta^{\tau}\widehat\omega)\|\geq \eps^{N-\tau} \|S_{\theta^N\widehat\omega}^{kN-(N-\tau)}e(\theta^N\widehat\omega)\|.
\end{equation}
Note that $\theta^N\widehat\omega\in \theta^N U$ and from the invariance of $e(\omega)$, \eqref{Nstepnorm} and \eqref{Nstepnorm}  we derive that 
\begin{align}
&\|S_{\theta^N\widehat\omega}^{(k-1)N+\tau}e(\theta^N\widehat\omega)\|\notag\\[1ex]
=&\|S_{\theta^{kN}\widehat\omega}^{\tau}e(\theta^{kN}\widehat\omega)\| \|S^{N}_{\theta^{(k-1)N}\widehat\omega}e(\theta^{(k-1)N}\widehat\omega)\|\dots\|S^N_{\theta^N\widehat\omega}e(\theta^N\widehat\omega)\|\notag\\[1ex]
\geq& 
\eps^{k-1}\|S_{\theta^{kN}\widehat\omega}^{\tau}e(\theta^{kN}\widehat\omega)\| \|S^{N}_{\theta^{(k-1)N}\widehat\omega}\|\dots\|S^N_{\theta^{2N}\widehat\omega}\|\|S^N_{\theta^N\widehat\omega}\|.\label{FirstEstimate}
\end{align}
By \eqref{Nstepnorm} we have 
\[
\|S_{\theta^{kN}\widehat\omega}^{\tau}e(\theta^{kN}\widehat\omega)\|\geq \frac{\eps^{N+1}}{\|S_{\theta^{kN+\tau}\widehat\omega}^{N-\tau}\|}
\geq 
\frac{\eps^{N+1}}{\|S\|_{\infty}^{N-\tau}},
\]
which together with the fact that $\|S_{\theta^{kN}\widehat\omega}^{\tau}\|\leq \|S\|_{\infty}^{\tau}$ implies that 
\[
\|S_{\theta^{kN}\widehat\omega}^{\tau}e(\theta^{kN}\widehat\omega)\|
\geq
\frac{\eps^{N+1}}{\|S\|_{\infty}^{N}} \||S_{\theta^{kN}\widehat\omega}^{\tau}\|.
\]
Combining the above inequality with \eqref{Case1_Eq1} and \eqref{FirstEstimate} yields that 
\begin{align*}
\|S_{\omega}^{kN}e(\omega)\|
\geq &
\eps^{N-\tau}\eps^{k-1}
\frac{\eps^{N+1}}{\|S\|_{\infty}^N} \||S_{\theta^{kN}\widehat\omega}^{\tau}\|\|S^{N}_{\theta^{(k-1)N}\widehat\omega}\|\dots\|S^N_{\theta^{2N}\widehat\omega}\|\|S^N_{\theta^N\widehat\omega}\|\\[1ex]
\geq &
\frac{\eps^{2N+k-\tau}}{\|S\|_{\infty}^{N-\tau}}\|S_{\theta^N\widehat\omega}^{(k-1)N+\tau}\|.
\end{align*}
Note that $\|S_{\omega}^{kN}\|=\|S_{\theta^N\widehat\omega}^{(k-1)N+\tau}S_{\theta^{\tau}\widehat\omega}^{N-\tau}\|
\leq \|S_{\theta^N\widehat\omega}^{(k-1)N+\tau}\|\|S\|_{\infty}^{N-\tau}$. Thus, we arrive at 
\[
\|S_{\omega}^{kN}e(\omega)\|
\geq 
\frac{\eps^{2N+k-\tau}}{\|S\|_{\infty}^{2(N-\tau)}}\|S_{\omega}^{kN}\|
\geq 
\frac{\eps^{2N+k}}{( 1+2\|T\|_{\infty})^{2N}}\|S_{\omega}^{kN}\|.
\]
It means that \eqref{Goodgrowthrate} holds for all $\omega\in \bigcup_{j=0}^{N-1}\theta^j U$.

\noindent
\emph{Case 2}: $\omega\in \bigcup_{i=kN+1}^{\infty}  \Big(E_i\cup \theta E_i\dots\cup\theta^{i-1} E_i\Big)$. Let $i\geq kN+1$ and $\tau\in \{0,\dots,i-1\}$ such that $\omega\in\theta^{\tau }E_i$. We write $\omega=\theta^{\tau}\widehat\omega$ for some $\widehat\omega\in E_i$ and let  $i=\ell N+r, \tau=sN+t$, where $s,\ell\in\N$ and $r,t\in \{0,\dots,N-1\}$. To estimate $\|S_{\omega}^{kN}e(\omega)\|=
\|S^{kN}_{\theta^{sN+t}\widehat\omega}e(\theta^{sN+t}\widehat\omega)\|$, we consider three subcases $\tau+kN\leq i-1$ (Subcase 2a), $i\leq \tau+kN\leq i+N-1$ (Subcase 2b) and $\tau+kN\geq i+N$ (Subcase 2c).

\emph{Subcase 2a}: $\tau+kN\leq i-1$. Then, by invariance of $e(\omega)$ we have 
\begin{align*}
&\|S^{kN}_{\theta^{sN+t}\widehat\omega}e(\theta^{sN+t}\widehat\omega)\|\\[1ex]
=&\|S^{N-t}_{\theta^{sN+t}\widehat\omega}e(\theta^{sN+t}\widehat\omega)\|\|S^{(k-1)N}_{\theta^{(s+1)N}\widehat\omega}e(\theta^{(s+1)N}\widehat\omega)\|
\|S^{t}_{\theta^{(k+s)N}\widehat\omega}e(\theta^{(k+s)N}\widehat\omega)\|,
\end{align*}
which together with \eqref{Construction_Step1a} and \eqref{Nstepnorm} implies that 
\begin{align}
&\|S^{kN}_{\theta^{sN+t}\widehat\omega}e(\theta^{sN+t}\widehat\omega)\|\notag
\\[1ex]
\geq& 
\eps^{k-1} \|S^{N-t}_{\theta^{sN+t}\widehat\omega}e(\theta^{sN+t}\widehat\omega)\|\|S^{(k-1)N}_{\theta^{(s+1)N}\widehat\omega}\|
\|S^{t}_{\theta^{(k+s)N}\widehat\omega}e(\theta^{(k+s)N}\widehat\omega)\|.
\label{Case2a_Eq1}
\end{align}
Using again  \eqref{Construction_Step1a} and \eqref{Nstepnorm}, we obtain that 
\begin{equation}\label{Subcase2a:Eq2}
\|S^{N-t}_{\theta^{sN+t}\widehat\omega}e(\theta^{sN+t}\widehat\omega)\|=\frac{\|S^{N}_{\theta^{sN}\widehat\omega}e(\theta^{sN}\widehat\omega)\|}{\|S^{t}_{\theta^{sN}\widehat\omega}e(\theta^{sN}\widehat\omega)\|}
\geq 
\frac{\eps^{N+1}}{\|S\|_{\infty}^t}.
\end{equation}
It remains to estimate $\|S^{t}_{\theta^{(k+s)N}\widehat\omega}e(\theta^{(k+s)N}\widehat\omega)\|$. Since $\tau+k\ell\leq i-1$ it follows that  either $k+s<\ell$ or $k+s=\ell$. Concerning the first case, i.e. $k+s<\ell$, from  \eqref{Construction_Step1a} and \eqref{Nstepnorm} we derive that 
\[
\|S^{t}_{\theta^{(k+s)N}\widehat\omega}e(\theta^{(k+s)N}\widehat\omega)\|
\geq 
\frac{\|S^{N}_{\theta^{(k+s)N}\widehat\omega}e(\theta^{(k+s)N}\widehat\omega)\|}{\|S^{N-t}_{\theta^{(k+s)N}\widehat\omega}e(\theta^{(k+s)N}\widehat\omega)\|}
\geq\frac{\eps^{N+1}}{\|S\|_{\infty}^{N-t}},
\]
which together with \eqref{Case2a_Eq1} and \eqref{Subcase2a:Eq2} implies that 
\begin{equation}\label{Firstcase}
\|S^{kN}_{\theta^{sN+t}\widehat\omega}e(\theta^{sN+t}\widehat\omega)\|
\geq \frac{\eps^{2N+k+1}}{\|S\|_{\infty}^N}\|S^{kN}_{\omega}\|
\geq 
\frac{\eps^{2N+k+1}}{(1+2\|T\|_{\infty})^N}\|S^{kN}_{\omega}\|,
\end{equation}
where we used \eqref{BoundonS} to obtain the last inequality. Meanwhile, in the other case, i.e. $k+s=\ell$, by using \eqref{Remainder} we obtain that 
\[
\|S^{t}_{\theta^{(k+s)N}\widehat\omega}e(\theta^{(k+s)N}\widehat\omega)\|
\geq 
\eps^t
\geq 
\eps^t\frac{\|S^{t}_{\theta^{(k+s)N}\widehat\omega}\|}{\|S\|_{\infty}^{t}},
\]
which together with \eqref{Case2a_Eq1} and \eqref{Subcase2a:Eq2} implies that 
\begin{equation}\label{Secondcase}
\|S^{kN}_{\theta^{sN+t}\widehat\omega}e(\theta^{sN+t}\widehat\omega)\|
\geq \frac{\eps^{N+k+t}}{\|S\|_{\infty}^N}\|S^{kN}_{\omega}\|
\geq 
\frac{\eps^{2N+k}}{(1+2\|T\|_{\infty})^N}\|S^{kN}_{\omega}\|,
\end{equation}
where we used \eqref{BoundonS} to obtain the last inequality. Combining \eqref{Firstcase} and \eqref{Secondcase}, we obtain that in this subcase 
\[
\|S^{kN}_{\theta^{sN+t}\widehat\omega}e(\theta^{sN+t}\widehat\omega)\|
\geq 
\frac{\eps^{2N+k+1}}{(1+2\|T\|_{\infty})^N}\|S^{kN}_{\omega}\|,
\]
which means that  \eqref{Goodgrowthrate} holds. 

\emph{Subcase 2b}: $i\leq \tau+kN\leq i+N-1$. Hence, from $i=\ell N+r, \tau=sN+t$ with $r,t\in \{0,\dots,N-1\}$ we have 
\begin{equation}\label{estimateindex}
0\leq kN+\tau-i\leq N-1,\quad (\ell-s)\leq k    
\end{equation}
By invariance of $e(\omega)$ we have 
\begin{align*}
\|S^{kN}_{\theta^{sN+t}\widehat\omega}e(\theta^{sN+t}\widehat\omega)\|
&=\|S^{N-t}_{\theta^{sN+t}\widehat\omega}e(\theta^{sN+t}\widehat\omega)\|\|S^{(\ell-s-1)N}_{\theta^{(s+1)N}\widehat\omega}e(\theta^{(s+1)N}\widehat\omega)\|\\[1ex]
&\times
\|S^{r}_{\theta^{\ell N}\widehat\omega}e(\theta^{\ell N}\widehat\omega)\|\|S^{kN+\tau-i}_{\theta^i\widehat\omega}e(\theta^i\widehat\omega)\|.
\end{align*}
Using \eqref{Construction_Step1a} and  \eqref{Nstepnorm}, we obtain that 
\begin{align}
\|S^{N-t}_{\theta^{sN+t}\widehat\omega}e(\theta^{sN+t}\widehat\omega)\|
&= \frac{\|S^{N}_{\theta^{sN}\widehat\omega}e(\theta^{sN}\widehat\omega)\|}{\|S^{t}_{\theta^{sN}\widehat\omega}e(\theta^{sN}\widehat\omega)\|}\notag\\
&\geq \frac{\eps^{N+1}}{\|S\|_{\infty}^N}\|S^{N-t}_{\theta^{sN+t}\widehat\omega}\|,\label{Subcase2b_Eq1}\\[1ex]
\|S^{(\ell-s-1)N}_{\theta^{(s+1)N}\widehat\omega}e(\theta^{(s+1)N}\widehat\omega)\|
&=\prod_{j=s+1}^{\ell-1}\|S_{\theta^{jN}\widehat\omega}e(\theta^{jN}\widehat\omega)\|\notag\\
&\geq 
\eps^{\ell-s-1} \|S^{(\ell-s-1)N}_{\theta^{(s+1)N}\widehat\omega}\|\label{Subcase2b_Eq2}.
\end{align}
Meanwhile, by \eqref{Remainder} we have 
\begin{equation}\label{Subcase2b_Eq3}
\|S^{r}_{\theta^{\ell N}\widehat\omega}e(\theta^{\ell N}\widehat\omega)\|
\geq \eps^r\geq \eps^r \frac{\|S^{r}_{\theta^{\ell N}\widehat\omega}\|}{\|S\|_{\infty}^r}.
\end{equation}
Finally, note that $\theta^i\widehat\omega\in U$ and thus by using \eqref{Construction_Step2} we gain that 
\begin{equation}\label{Subcase2b_Eq4}
\|S^{kN+\tau-i}_{\theta^i\widehat\omega}e(\theta^i\widehat\omega)\|\geq \eps^{kN+\tau-i}
\geq \eps^{kN+\tau-i}\frac{\|S^{kN+\tau-i}_{\theta^i\widehat\omega}\|}{\|S\|_{\infty}^{kN+\tau-i}}.
\end{equation}
Combining \eqref{Subcase2b_Eq1}, \eqref{Subcase2b_Eq2}, \eqref{Subcase2b_Eq3} and \eqref{Subcase2b_Eq4} yields that \[
\|S^{kN}_{\theta^{sN+t}\widehat\omega}e(\theta^{sN+t}\widehat\omega)\|
\geq 
\frac{\eps^{N+(\ell-s)+r+(kN+\tau-i)}}{\|S\|_{\infty}^{N+r+(kN+\tau-i)}}\|S^{kN}_{\theta^{sN+t}\widehat\omega}\|,
\]
which together with \eqref{estimateindex} and \eqref{BoundonS} implies that 
\[
\|S^{kN}_{\theta^{sN+t}\widehat\omega}e(\theta^{sN+t}\widehat\omega)\|
\geq 
\frac{\eps^{3N+k}}{(1+2\|T\|_{\infty})^{3N}}\|S^{kN}_{\theta^{sN+t}\widehat\omega}\|.
\]
Thus, \eqref{Goodgrowthrate} holds in this subcase. 

\emph{Subcase 2c}: $ \tau+kN\geq i+N$.  Hence, from $i=\ell N+r, \tau=sN+t$ with $r,t\in \{0,\dots,N-1\}$ we have 
\begin{equation}\label{estimateindex_casec}
0\leq (\ell-s)\leq k-1.    
\end{equation}
By invariance of $e(\omega)$ we have 
\begin{equation}\label{Twoparts}
\|S^{kN}_{\theta^{sN+t}\widehat\omega}e(\theta^{sN+t}\widehat\omega)\|=  \|S^{i-(sN+t)}_{\theta^{sN+t}\widehat\omega}e(\theta^{sN+t}\widehat\omega)\|  \|S^{kN+(sN+t)-i}_{\theta^{i}\widehat\omega}e(\theta^{i}\widehat\omega)\|.
\end{equation}
On the one hand, analogous to the estimates in \eqref{Subcase2b_Eq1}, \eqref{Subcase2b_Eq2} and \eqref{Subcase2b_Eq3} we have 
\begin{equation}\label{Estimate_Part1}
  \|S^{i-(sN+t)}_{\theta^{sN+t}\widehat\omega}e(\theta^{sN+t}\widehat\omega)\| 
  \geq \frac{\eps^{N+r+(\ell-s)}}{\|S\|_{\infty}^{N+r}}\|S^{i-(sN+t)}_{\theta^{sN+t}\widehat\omega}\|.
\end{equation}
On the other hand,  by denoting $\widetilde\omega:=\theta^i\widehat\omega$ we have $\widetilde \omega\in U$ and from  invariance of $e(\omega)$ we derive that 
\begin{align}
\|S^{kN+(sN+t)-i}_{\theta^{i}\widehat\omega}e(\theta^{i}\widehat\omega)\|
&=\|S_{\widetilde\omega}^{(k+s-\ell)N+(t-r)}e(\widetilde\omega)\|\notag\\[1ex]
&=
\|S_{\theta^N\widetilde\omega}^{(k+s-\ell-1)N+(t-r)}e(\theta^N\widetilde\omega)\|
\|S_{\widetilde\omega}^{N}e(\widetilde\omega)\|.\label{New_Estimate1}
\end{align}
Thanks to the construction of $S(\omega)$ for $\omega\in\bigcup_{j=0}^{N-1}\theta^j U$ as in Step 2, we have \begin{equation}\label{New_Estimate2}
 \|S_{\widetilde\omega}^{N}e(\widetilde\omega)\|
 \geq 
 \eps^N\geq \frac{\eps^N}{\|S\|_{\infty}^N}\|S^N_{\widetilde\omega}\|.
\end{equation}
Thus, to estimate $\|S^{kN+(sN+t)-i}_{\theta^{i}\widehat\omega}e(\theta^{i}\widehat\omega)\|$ it remains to estimate $\|S_{\theta^N\widetilde\omega}^{(k+s-\ell-1)N+(t-r)}e(\theta^N\widetilde\omega)\|$. In fact, we show that 
\begin{equation}\label{Smallaim}
  \|S_{\theta^N\widetilde\omega}^{(k+s-\ell-1)N+(t-r)}e(\theta^N\widetilde\omega)\| 
  \geq \frac{\eps^{k+s-\ell+N}}{\|S\|_{\infty}^N}       \|S_{\theta^N\widetilde\omega}^{(k+s-\ell-1)N+(t-r)}\|.
\end{equation}
For this purpose, we consider two separated cases $(t-r)\geq 0$ or $(t-r)<0$:
\begin{itemize}
    \item If $(t-r)\geq 0$ then
    \begin{align*}
       &\|S_{\theta^N\widetilde\omega}^{(k+s-\ell-1)N+(t-r)}e(\theta^N\widetilde\omega)\|\\[1ex]
       &= \|S_{\theta^N\widetilde\omega}^{(k+s-\ell-1)N}e(\theta^N\widetilde\omega)\|
       \frac{\|S^{N}_{\theta^{(k+s-\ell)N}\widetilde\omega}e(\theta^{(k+s-\ell)N}\widetilde\omega)\|}{\|S^{N-(t-r)}_{\theta^{(k+s-\ell)N+(t-r)}\widetilde\omega}e(\theta^{(k+s-\ell)N+(t-r)}\widetilde\omega)\|},
    \end{align*}
which together with \eqref{Construction_Step1a} and \eqref{Nstepnorm} implies that  \begin{align*}
       \|S_{\theta^N\widetilde\omega}^{(k+s-\ell-1)N+(t-r)}e(\theta^N\widetilde\omega)\|
       &\geq \eps^{k+s-\ell-1}\|S_{\theta^N\widetilde\omega}^{(k+s-\ell-1)N}\|\frac{\eps^{N+1}}{\|S\|_{\infty}^{N-(t-r)}},\\[1ex]
       &\geq 
       \frac{\eps^{k+s-\ell+N}}{\|S\|_{\infty}^N}       \|S_{\theta^N\widetilde\omega}^{(k+s-\ell-1)N+(t-r)}\|.
    \end{align*}
Thus, \eqref{Smallaim} is proved in this case.
\item If $(t-r)<0$ then 
 \begin{align*}
       &\|S_{\theta^N\widetilde\omega}^{(k+s-\ell-1)N+(t-r)}e(\theta^N\widetilde\omega)\|\\[1ex]
       &= \|S_{\theta^N\widetilde\omega}^{(k+s-\ell-2)N}e(\theta^N\widetilde\omega)\|
       \frac{\|S^{N}_{\theta^{(k+s-\ell-1)N}\widetilde\omega}e(\theta^{(k+s-\ell)N}\widetilde\omega)\|}{\|S^{r-t}_{\theta^{(k+s-\ell)N+(t-r)}\widetilde\omega}e(\theta^{(k+s-\ell)N+(t-r)}\widetilde\omega)\|},
    \end{align*}
which together with \eqref{Construction_Step1a} and \eqref{Nstepnorm} implies that  \begin{align*}
       \|S_{\theta^N\widetilde\omega}^{(k+s-\ell-1)N+(t-r)}e(\theta^N\widetilde\omega)\|
       &\geq \eps^{k+s-\ell-2}\|S_{\theta^N\widetilde\omega}^{(k+s-\ell-2)N}\|\frac{\eps^{N+1}}{\|S\|_{\infty}^{r-t}},\\[1ex]
       &\geq 
       \frac{\eps^{k+s-\ell+N-1}}{\|S\|_{\infty}^N}       \|S_{\theta^N\widetilde\omega}^{(k+s-\ell-1)N+N-(r-t)}\|\\
       &\geq \frac{\eps^{k+s-\ell+N}}{\|S\|_{\infty}^N}       \|S_{\theta^N\widetilde\omega}^{(k+s-\ell-1)N+(t-r)}\|
    \end{align*}
Thus, \eqref{Smallaim} is proved.
\end{itemize}
Combining \eqref{Smallaim} with \eqref{New_Estimate1} and \eqref{New_Estimate2}, we obtain that 
\[
\|S^{kN+(sN+t)-i}_{\theta^{i}\widehat\omega}e(\theta^{i}\widehat\omega)\|
\geq 
       \frac{\eps^{k+s-\ell+2N}}{\|S\|_{\infty}^{2N}} \|S^{kN+(sN+t)-i}_{\theta^{i}\widehat\omega}\|,
\]
which together with \eqref{Twoparts} and \eqref{Estimate_Part1} leads to
\begin{align*}
\|S^{kN}_{\theta^{sN+t}\widehat\omega}e(\theta^{sN+t}\widehat\omega)\|
& \geq \frac{\eps^{k+3N+r}}{\|S\|_{\infty}^{3N+r}}\|S^{kN}_{\theta^{sN+t}\widehat\omega}\|\\[1ex]
&\geq 
\frac{\eps^{k+4N}}{(1+2\|T\|_{\infty})^{4N}} \|S^{kN}_{\theta^{sN+t}\widehat\omega}\|,
\end{align*}
where we used \eqref{BoundonS} and the fact that $\eps\in (0,\frac{1}{6})$ and $r<N$ to obtain the last inequality. It means that \eqref{Goodgrowthrate} also holds in this case, and the proof is complete.
\end{proof}

\begin{proof}[Proof of Theorem A (iii)]
Let $T\in\mathcal N$ and $\eta>0$ be arbitrary.  To complete the proof, it is sufficient to construct $\widetilde T\in  \mathcal K_{\infty}(\Omega;\mathcal H)$ and $\delta>0$ such that 
\begin{equation}\label{Requirement}
B_{\delta}(\widetilde T) \subset B_{\eta}(T)\quad \hbox{ and } \quad B_{\delta}(\widetilde T) \cap \mathcal N=\emptyset.
\end{equation}
For this purpose, choose and fix $\eps, \alpha>0$ such that 
\begin{equation}\label{Constant_EpsilonAlpha}
\eps\leq\min\left(\frac{1}{6}, \frac{\eta}{9(1+\|T\|_{\infty})}\right),\qquad (e^{2\alpha}-1)(1+\|T\|_{\infty})\leq \frac{\eta}{3}.   
\end{equation}
Let $N^*(T,\eps)$ be a natural number satisfying the conclusion of Proposition \ref{TechnicalProposition_1}. Thus, for all $N\geq N^*(T,\eps)$ the conclusion of Proposition \ref{Millionschikov} holds. Choose and fix $N\geq N^*(T,\eps)$ such that $ e^{-\frac{\alpha}{2} N}\leq \eps$. Since $e^{-\alpha N}<e^{-\frac{\alpha}{2} N}\leq \eps$ it follows that there exists $k\in\N$ such that
\begin{equation}\label{Constant_k}
e^{-\alpha kN}
\leq 
\frac{\eps^{k+4N}}{(1+2\|T\|_{\infty})^{4N}}.
\end{equation}
Using Proposition \ref{Millionschikov}, there exist $S\in\cK_{\infty}(\Omega,\cH)$ and an invariant unit random vector $e:\Omega\rightarrow \cH$ of $S$ such that $ \|S-T\|_{\infty}\leq 3\eps (1+ \|T\|_{\infty})$ and 
\begin{align}
\|S(\omega)e(\omega)\|
& \geq \frac{\eps^{N+1}}{(1+2\|T\|_{\infty})^{N-1}}\quad\hbox{ for } \omega\in\Omega,\label{EstimateRho}\\[1ex]
\|S^{kN}_{\omega}e(\omega)\|
&\geq  \frac{\eps^{k+4N}}{(1+2\|T\|_{\infty})^{4N}} \|S_{\omega}^{kN}\|\quad\hbox{ for } \omega\in\Omega.\notag
\end{align}
This together with \eqref{Constant_EpsilonAlpha} and \eqref{Constant_k} implies that $ \|S-T\|_{\infty}\leq \frac{\eta}{3}$ and 
\begin{equation}\label{BetterGrowthRate}
\|S^{kN}_{\omega}e(\omega)\|\geq  e^{-\alpha kN} \|S_{\omega}^{kN}\|\quad\hbox{ for } \omega\in\Omega.
\end{equation}
In light of \eqref{EstimateRho} and \eqref{BetterGrowthRate}, the random compactor operator $S$ satisfies all assumptions of Proposition \ref{TechnicalProposition_C}. Thus, by virtue of Proposition \ref{TechnicalProposition_C} there exists $\widetilde T\in \mathcal K_{\infty}(\Omega;\mathcal H)$ such that the top Lyapunov exponent function $\kappa(\cdot)$ is continuous at $\widetilde T$ and 
\[
\|\widetilde T-S\|_{\infty}\leq (e^{2\alpha}-1)\|S\|_{\infty}.
\]
Using $\|S-T\|_{\infty}\leq \frac{\eta}{3}$, $\eta\in (0,1)$ and \eqref{Constant_EpsilonAlpha}, we obtain that 
\[
\|\widetilde T-S\|_{\infty}\leq (e^{2\alpha}-1)(\frac{\eta}{3}+\|T\|_{\infty})\leq \frac{\eta}{3}.
\]
Consequently, $\|\widetilde T-T\|_{\infty}\leq \frac{2\eta}{3}$. This, together with the fact that the top Lyapunov exponent function $\kappa(\cdot)$ is continuous at $\widetilde T$, implies that there exists $\delta>0$ satisfying \eqref{Requirement}. The proof is complete.
\end{proof}
\section{Proof of Theorem B}\label{Section_ProofB}
\subsection{Dominated splitting}
\subsubsection{Dominated splitting and integral separation for linear random dynamical systems on a finite-dimensional space}This subsection is devoted to recalling the notion and the result on the genericity of dominated splitting and integral separation for linear random dynamical systems in a finite-dimensional space $\R^d$, see \cite{Bochi2002,Cong05,Cong_Doan16} for more details.

For each $d\in \N$, let $\mathcal L^{\infty}(\Omega,\R^{d\times d})$ denote the set of bounded measurable maps $A:\Omega\rightarrow \R^{d\times d}$. Each random  map $A\in \mathcal L^{\infty}(\Omega,\R^{d\times d})$  gives rise to an one-sided linear random dynamical system $\Phi_A:\N\times \Omega \rightarrow \R^{d\times d}$ via
\[
\Phi_A(n,\omega)
:=
\left\{
\begin{array}{ll}
A(\theta^{n-1}\omega)\dots A(\omega),
& \quad \hbox{ if }  n>0,\\[1.5ex]
\id, & \quad \hbox{ if }  n=0,
\end{array}
\right.
\]
where $\id$ denotes the identity matrix in $\R^{d\times d}$. By virtue of the Multiplicative Ergodic Theorem for one-sided linear random dynamical systems defined on an invertible metric dynamical system, see e.g. \cite{Froyland}, there exist finite numbers of Lyapunov exponents $\lambda_1(A)>\dots >\lambda_{p(A)}(A)$, where $p(A)\in \{1,\dots, d\}$, and a forward invariant decomposition
\[
\R^d=\mathcal O^1_{\omega}(A)\oplus \mathcal O^2_{\omega}(A)\oplus\dots\oplus \mathcal O^{p(A)}_{\omega}(A)
\]
with the property that
\[
\lim_{n\to \infty} \frac{1}{n}\log\|\Phi_A(n,x)v\|=\lambda_i\quad\hbox{iff}\quad v\in \mathcal O^i_A(\omega)\setminus\{0\}.
\]
When $p(A)=d$, we call the Lyapunov spectrum of $A$ \emph{simple}.  We now recall a result on the density of simple Lyapunov spectra of linear random dynamical systems in a finite-dimensional space in \cite{Arnold_Cong_1999}.
\begin{theorem}\label{Density}
 For $d\in\N$, let   $A\in \cL^{\infty}(\Omega,\R^{d\times d})$ and $\eps>0$ be arbitrary. Then, for any  measurable set $U\subset \Omega$ of positive probability, there exists $B\in \cL^{\infty}(\Omega,\R^{d\times d})$ satisfying the following properties:
\begin{itemize}
\item [(i)] $\mbox{ess}\sup_{\omega\in\Omega}\|B(\omega)-A(\omega)\|\leq \eps$ and $
B(\omega)=A(\omega)$ for all  $\omega\in\Omega\setminus U$,	
\item [(ii)] The Lyapunov spectrum of $B$ is simple. 	
\end{itemize}
\end{theorem}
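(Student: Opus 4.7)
The plan is to proceed by induction on the dimension $d$. The base case $d=1$ is vacuous since there is a single Lyapunov exponent. For the inductive step, the key reduction is that it suffices to prove the following substatement: given any $A\in\mathcal L^{\infty}(\Omega,\R^{d\times d})$, any $\eps>0$, and any measurable $U\subset\Omega$ with $\mP(U)>0$, there exists $\widetilde A\in\mathcal L^{\infty}(\Omega,\R^{d\times d})$ coinciding with $A$ outside $U$, with $\|\widetilde A-A\|_{\infty}\le \eps/2$, and admitting a dominated splitting $\R^d=L_{\omega}\oplus F_{\omega}$ in which $L_{\omega}$ is a one-dimensional equivariant subspace whose associated Lyapunov exponent strictly exceeds the top Lyapunov exponent of the restriction $\widetilde A|_{F_{\omega}}$. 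Once such an $\widetilde A$ is constructed, I would express $\widetilde A$ in block form with respect to $L_{\omega}\oplus F_{\omega}$, choose a subset $U'\subset U$ of positive probability on which the $F_{\omega}$-block of $\widetilde A$ is essentially bounded as a $(d-1)$-dimensional random matrix cocycle, and apply the inductive hypothesis with tolerance $\eps/2$ to further split the $F_{\omega}$-component.

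To construct $\widetilde A$ on a chosen $U$, I would employ the Millionshchikov rotation technique as generalized in \cite{Cong05}, in the same spirit as the constructions in Proposition \ref{TechnicalProposition_1} and Proposition \ref{Millionschikov} of the present paper but carried out in the finite-dimensional setting. By Halmos-Rokhlin's lemma, for any prescribed large $N$ I can select a measurable $V\subset U$ of positive probability such that $V,\theta V,\dots,\theta^{N-1}V$ are pairwise disjoint. Along this tower, two successive rotation-type perturbations are performed. The first rotation guarantees the existence of a unit random vector $e(\omega)$ on $V$ such that $\|A^N_{\omega}e(\omega)\|$ is within a small factor of $\|A^N_{\omega}\|$; the second aligns $A(\omega)e(\omega)$ with $e(\theta\omega)$ along the tower, after which $e$ can be propagated by the cocycle to an equivariant unit vector on all of $\Omega$. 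Each individual rotation changes the cocycle by an amount of order $\eps/N$, so when $N$ is taken large the total essential-supremum perturbation can be made at most $\eps/2$.

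The good-growth estimate on $e$ then forces the Lyapunov exponent associated with $L_{\omega}:=\mathrm{span}\,e(\omega)$ to coincide with the top Lyapunov exponent of $\widetilde A$, and a standard exterior-power / cone argument (essentially the finite-dimensional avatar of the Krein-Rutman Theorem \ref{Krein-Rutman} and Proposition \ref{GeneralizedKrein-Rutman}) supplies a complementary equivariant subspace $F_{\omega}$ whose top exponent is strictly smaller, hence the desired dominated splitting. Since in finite dimensions a dominated splitting into a one-dimensional and a $(d-1)$-dimensional equivariant part is exactly integral separation at the top, and the inductive hypothesis delivers integral separation on $F_{\omega}$, we end up with a fully integrally separated cocycle, which has simple Lyapunov spectrum.

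The main obstacle is the Millionshchikov rotation step itself: one must simultaneously (a) keep the perturbation inside the prescribed set $U$, (b) bound the total perturbation by $\eps/2$ in $L^{\infty}$, and (c) produce a \emph{uniform} exponential gap between the growth along $e$ and the growth on the constructed complement, so that the dominated splitting (rather than only simplicity in the weaker sense) is actually achieved. Controlling the gap requires a careful choice of the tower height $N$ as a function of $\eps$ and $\|A\|_{\infty}$, together with Poincar\'e-recurrence bookkeeping over the return tower $\bigcup_{k\ge N+1}\bigcup_{j=0}^{k-1}\theta^j W_k$ (as in the proof of Proposition \ref{TechnicalProposition_1}) to deal with points whose orbits visit $V$ at irregular times. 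All the remaining arguments are routine inductive packaging.
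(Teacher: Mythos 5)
The paper itself does not prove Theorem~\ref{Density}; it is quoted from Arnold and Cong (1999), so there is no internal proof to compare your argument against. Your outline does recapitulate the standard Millionshchikov-rotation / integral-separation strategy underlying that reference and Cong (2005): induct on $d$, use a Halmos--Rokhlin tower inside $U$ to rotate a one-dimensional equivariant direction into a dominating position, then apply the inductive hypothesis to the complementary $(d-1)$-dimensional block. That is indeed the right route.

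However, as written this is a plan rather than a proof. The hard step---item (c) in your own list, producing a \emph{uniform} exponential gap over all of $\Omega$ from rotations supported only on a tower of small measure, via Poincar\'e-return bookkeeping---is precisely the content of the theorem you are trying to prove, and you leave it unfilled. Beyond that, the inductive packaging hides an unaddressed subtlety: after you perturb the $F_{\omega}$-block to split its exponents, you must verify that the dominated splitting between $L_{\omega}$ and $F_{\omega}$ survives. A perturbation of the $F$-block increases the growth of $F$-vectors multiplicatively, and if $\widetilde A|_{F_{\omega}}$ is not uniformly bounded away from zero this increase is unbounded over long orbit blocks. The usual safeguards---first make the cocycle uniformly invertible, and request from the inductive hypothesis a perturbation small relative to the domination rate of $\widetilde A$---are absent from your sketch. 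Finally, the step of choosing a subset $U'\subset U$ ``on which the $F_{\omega}$-block is essentially bounded'' is superfluous: once a genuine dominated splitting exists, the angle between $L_{\omega}$ and $F_{\omega}$ is uniformly bounded below (cf.\ Remark~\ref{Angleseparation_Remark}), so the $F$-block, expressed in a measurable orthonormal basis of $F_{\omega}$, is automatically essentially bounded on all of $\Omega$.
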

\begin{definition}[Dominated splitting]\label{IntegralSeparation}
Let  $A\in \cL^{\infty}(\Omega,\R^{d\times d})$. An invariant measurable decomposition $\R^d=E(\omega)\oplus F(\omega)$ of $A$ is said to be a \emph{dominated splitting} if there exist $K,\alpha,\delta>0$ such that for $\mP- \hbox{a.s. } \omega\in\Omega$ and $n\in\N$ the following statements hold
\begin{align}
\|A(\omega)v\|
& \geq
\delta \|v\|\quad \hbox{for }  
  v\in E(\omega)\setminus\{0\},\label{EquivalenceForm_Eq1a}\\[1ex]
\frac{\|\Phi_A(n,\omega)v\|}{\|v\|}
& \geq
K e^{\alpha n}
 \frac{\|\Phi_A(n,\omega)u\|}{\|u\|}\quad \hbox{for }  
 v\in E(\omega)\setminus\{0\}, u\in F(\omega)\setminus\{0\}.\label{EquivalenceForm_Eq1b}
\end{align}
\end{definition}
\begin{remark}\label{Equivalence_DominatedSplitting}
(i) The condition \eqref{EquivalenceForm_Eq1b} can be replaced by an equivalent statement that 
there exists $N\in\N$ and $\rho>1$ such that for $\mP- \hbox{a.s. } \omega\in\Omega$
\begin{equation}\label{EquivalenceForm_Eq2}
\frac{\|\Phi_A(N,\omega)v\|}{\|v\|}
\geq
\rho 
 \frac{\|\Phi_A(N,\omega)u\|}{\|u\|} \quad \hbox{for }  
 v\in E(\omega)\setminus\{0\}, u\in F(\omega)\setminus\{0\}.    
 \end{equation}

\begin{proof}
($\Rightarrow$) Suppose that \eqref{EquivalenceForm_Eq1b}. Then, choose and fix an arbitrary $N\in\N$ such that $Ke^{\alpha N}<1$. Therefore, \eqref{EquivalenceForm_Eq2} holds for $\rho:=K e^{\alpha N}$.

\noindent 
($\Leftarrow$) In this direction, we assume that  \eqref{EquivalenceForm_Eq2} holds. Let  $n\in\N$ and  $u\in E(\omega)\setminus\{0\}, v\in F(\omega)\setminus\{0\}$ be arbitrary. We write $n=sN+r$, where $r\in\{0,\dots,N-1\}$. Then, by cocycle property $\Phi_A(n+m,\omega)=\Phi_A(n,\theta^m\omega)\Phi_A(m,\omega)$ and \eqref{EquivalenceForm_Eq1a}, \eqref{EquivalenceForm_Eq2} we have  for $\mP- \hbox{a.s. } \omega\in\Omega$ and  $v\in E(\omega)\setminus\{0\}, u\in F(\omega)\setminus\{0\}$
\begin{align}
\frac{\|\Phi_A(n,\omega)v\|}{\|v\|}
& =\frac{\|\Phi_A(r,\theta^{sN}\omega)\Phi_A(sN,\omega)v\|}{\|\Phi_A(sN,\omega)v\|}\frac{\|\Phi_A(sN,\omega)v\|}{\|v\|}\notag\\
& \geq \delta^r \rho^s  \frac{\|\Phi_A(sN,\omega)u\|}{\|u\|}\notag\\[1ex]
& \geq \frac{\delta^r}{\|A\|_{\infty}^r}\rho^{\frac{n}{N}-1} \frac{\|\Phi_A(n,\omega)u\|}{\|u\|}.
\end{align}
It means that \eqref{EquivalenceForm_Eq1b} holds for $K:=\frac{\delta^r}{\rho \|A\|_{\infty}^r}$ and $\alpha:=\frac{\log\rho}{N}$.
\end{proof}

(ii) There is a characterization of dominated splitting in terms of exponential separation of singular values, see  \cite{Bochi09,Blumenthal19,Quas_19}.
 \end{remark}
\begin{definition}[Integral separation]
 $A$ is said to be \emph{integrally separated} if there exist  an invariant measurable decomposition $\R^d=\bigoplus_{i=1}^d E_i(\omega)$, where $E_i(\omega)$ is a linear subspace of dimension $1$, and positive constants $K,\alpha,\delta$ such that for $\mP- \hbox{a.s. } \omega\in\Omega$ and $n\in\N$ the following statements hold
 \begin{align}
\|A(\omega)v\|
& \geq \delta\|v\|\quad \hbox{ for } v\in \R^d\setminus\{0\},
\label{Integralseparaction_C1}\\[1ex]     
\frac{\|\Phi_A(n,\omega)v\|}{\|v\|}
& \geq
K e^{\alpha n}
 \frac{\|\Phi_A(n,\omega)u\|}{\|u\|}\hbox{ for }  v\in \bigoplus_{j=1}^i E_j(\omega)\setminus\{0\}, u\in \bigoplus_{j=i+1}^dE_{j}(\omega)\setminus\{0\},
 \label{Integralseparaction_C2}
 \end{align}
where $i=1,\dots,d-1$.
\end{definition}
The following remark is devoted to presenting a way to check integral separation inductively. 
\begin{remark}\label{Checkingintegralseparation}
Condition \eqref{Integralseparaction_C2} can be replaced by that the restriction of $A$ on $\bigoplus_{i=1}^{d-1} E_i(\omega)$ is integrally separated, i.e. there exist $K,\alpha>0$ such that for $i\in\{1,\dots,d-2\}$
\begin{equation}\label{IntegralSeparation_Equivalence1}
\frac{\|\Phi_A(n,\omega)v\|}{\|v\|}
 \geq
K e^{\alpha n}
 \frac{\|\Phi_A(n,\omega)u\|}{\|u\|}\hbox{ for }  v\in \bigoplus_{j=1}^i E_j(\omega)\setminus\{0\}, u\in \bigoplus_{j=i+1}^{d-1}E_{j}(\omega)\setminus\{0\}    
\end{equation}
and the decomposition $\R^d= \left(\bigoplus_{i=1}^{d-1} E_i(\omega)\right)\oplus E_d(\omega)$ is dominated with respect to $A$, i.e. there exist $L,\beta>0$ such that 
\begin{equation}\label{IntegralSeparation_Equivalence2}
\frac{\|\Phi_A(n,\omega)v\|}{\|v\|}
 \geq
L e^{\beta n}
 \frac{\|\Phi_A(n,\omega)u\|}{\|u\|}\hbox{ for }  v\in \bigoplus_{j=1}^{d-1} E_j(\omega)\setminus\{0\}, u\in E_{d}(\omega)\setminus\{0\}.    
\end{equation}
\end{remark}
\begin{proof}
Suppose that \eqref{IntegralSeparation_Equivalence1} and \eqref{IntegralSeparation_Equivalence2} hold. Then, \eqref{Integralseparaction_C2} holds for $i=d-1$. To conclude the proof, we choose and fix an arbitrary $i\in\{1,\dots,d-2\}$. Let $v\in \bigoplus_{j=1}^i E_j(\omega), u\in \bigoplus_{j=i+1}^{d}E_{j}(\omega)$ with $\|v\|=\|u\|=1$. We write $u=u_1+u_2$ with $u_1\in \bigoplus_{j=i+1}^{d-1}E_{j}(\omega), u_2\in E_{d}(\omega)$. It is noted that the angle between $\bigoplus_{j=i+1}^{d-1}E_{j}(\omega)$ and $E_{d}(\omega)$ is uniformly separated from zero, see \cite{Cong05}. Then, there exists $M>0$ (being independently with $u$) such that $\|u_1\|,\|u_2\|\leq M$. Hence, using \eqref{IntegralSeparation_Equivalence1} and \eqref{IntegralSeparation_Equivalence2}, we obtain that 
\[
\|\Phi_A(n,\omega)v\|\geq \frac{K}{M} e^{\alpha n} \|\Phi_A(n,\omega)u_1\|,\quad  \|\Phi_A(n,\omega)v\|\geq \frac{L}{M} e^{\beta n} \|\Phi_A(n,\omega)u_2\|,
\]
which together with $\|\Phi_A(n,\omega) u_1\|+\|\Phi_A(n,\omega) u_2\|\geq \|\Phi_A(n,\omega)u\|$ implies that 
\[
\|\Phi_A(n,\omega)v\|\geq
\widetilde{K} e^{{\widetilde \alpha}n}\|\Phi_A(n,\omega)v\|,\quad \hbox{ where } \widetilde K:=\min\{\frac{K}{2M},\frac{L}{2M}\}, \widetilde \alpha:=\min\{\alpha,\beta\}.
\]
Thus, \eqref{Integralseparaction_C2} holds for  $i\in\{1,\dots,d-2\}$.
\end{proof}
We recall below a result in \cite{Cong05} on the density of integral separation of random dynamical systems in a finite-dimensional space.
\begin{theorem}\label{Density_IntegralSeparation}
The set of integrally separated random dynamical systems is dense in the space of bounded linear random dynamical systems $\cL^{\infty}(\Omega,\R^{d\times d})$. 
\end{theorem}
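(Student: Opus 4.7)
The plan is to proceed by induction on the dimension $d$. The base case $d=1$ is immediate: given $A\in\cL^\infty(\Omega,\R)$ and $\eps>0$, perturbing $A(\omega)$ to ensure $|A(\omega)|\geq \eps$ everywhere yields \eqref{Integralseparaction_C1} with $\delta=\eps$, while \eqref{Integralseparaction_C2} is vacuous. For the inductive step I would assume the statement in dimension $d-1$ and, given $A\in\cL^\infty(\Omega,\R^{d\times d})$ and $\eps>0$, proceed in three stages guided by Remark \ref{Checkingintegralseparation}: first produce a simple Lyapunov spectrum, then upgrade the dichotomy between the bottom exponent and the rest to a dominated splitting, and finally integrally separate the top $d-1$ directions inductively.

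In the first stage I apply Theorem \ref{Density} to obtain $B$ with $\|B-A\|_\infty\leq \eps/3$ whose Lyapunov spectrum is simple, $\lambda_1>\cdots>\lambda_d$, with the associated measurable Oseledets decomposition $\R^d = E_1(\omega)\oplus\cdots\oplus E_d(\omega)$. An additional arbitrarily small perturbation of $B(\omega)$ away from singular matrices secures the uniform invertibility condition \eqref{Integralseparaction_C1}. Measurable frames adapted to the $E_i(\omega)$ are supplied by the measurable selection theorem.

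The second stage is the heart of the argument: upgrade the merely pointwise Oseledets gap between $F(\omega):=\bigoplus_{j=1}^{d-1}E_j(\omega)$ and $E_d(\omega)$ to the genuine dominated splitting \eqref{IntegralSeparation_Equivalence2}. Following the Millionshchikov rotation technique underlying Proposition \ref{Millionschikov}, I would fix $N\in\N$ so large that $e^{N(\lambda_{d-1}-\lambda_d)/2}$ is very large, select a Halmos-Rokhlin tower $V,\theta V,\ldots,\theta^{N-1}V$ via Theorem \ref{HR_Lemma}, and on each tower perform arbitrarily small rotations that damp growth along $E_d(\omega)$ precisely where the $N$-step comparison \eqref{EquivalenceForm_Eq2} currently fails, while leaving both the complementary directions and the action outside the tower unchanged. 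This yields $C$ with $\|C-B\|_\infty\leq \eps/3$ realizing \eqref{IntegralSeparation_Equivalence2} for the splitting $\R^d=F(\omega)\oplus E_d(\omega)$.

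In the third stage, I trivialize the restriction of $C$ to the invariant bundle $F(\omega)$ via the measurable frame, obtaining a $(d-1)$-dimensional cocycle $\widetilde C\in\cL^\infty(\Omega,\R^{(d-1)\times(d-1)})$, apply the inductive hypothesis to produce an $(\eps/3)$-perturbation $\widetilde C'$ that is integrally separated on $F(\omega)$ in the sense of \eqref{IntegralSeparation_Equivalence1}, and lift this correction back to $\R^d$ by keeping the $E_d$-action of $C$ unchanged. The resulting $\widetilde A$ satisfies $\|\widetilde A - A\|_\infty\leq \eps$ and, by Remark \ref{Checkingintegralseparation}, is integrally separated. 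The main obstacle I expect is the second stage: converting the almost-sure pointwise separation of Lyapunov exponents into a uniform $N$-step dominated-splitting inequality while preserving the Oseledets splitting and keeping the $L^\infty$-perturbation small is the delicate Millionshchikov bookkeeping, a finite-dimensional analog of the estimates in Lemma \ref{Perturbationalongashortperiod} and Proposition \ref{Millionschikov}.
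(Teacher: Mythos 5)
The paper does not prove this statement itself --- it is recalled as a black-box result from \cite{Cong05}, where openness and density of integral separation are established by generalizing Millionshchikov's rotation technique to random cocycles --- so your proposal must be judged on its own merits rather than against an in-text argument. Your inductive skeleton based on Remark \ref{Checkingintegralseparation} (first force a dominated splitting between $F(\omega)=\bigoplus_{j<d}E_j(\omega)$ and $E_d(\omega)$, then induct on the restriction to the dominant block) is a sensible organization, and Stages 1 and 3 are plausible modulo routine details (in Stage 3 you implicitly use the uniform angle bound between $F(\omega)$ and $E_d(\omega)$ supplied by Stage 2's domination, without which the $L^\infty$-norm of the lifted perturbation is not controlled).

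The gap is in Stage 2, which carries the entire weight of the theorem, and your proposed mechanism does not work as stated. You want to perturb so that $F(\omega)$ and $E_d(\omega)$ remain invariant while the $E_d$-growth is damped; but rotations preserve norms, so damping within a fixed splitting forces a \emph{scaling}, and the operator norm of such a scaling is governed by the oblique projection onto $E_d(\omega)$ along $F(\omega)$, which is unbounded before Stage 2 because the Oseledets angle between $F(\omega)$ and $E_d(\omega)$ has no uniform lower bound. Even granting an idealized per-step damping $e^{-\delta_0}$ with $\delta_0$ small in proportion to $\eps$, the cumulative effect over a height-$N$ tower is at most $e^{N\delta_0}$, whereas the quantity $\sigma_{\min}\bigl(\Phi_B(N,\omega)|_{F(\omega)}\bigr)\big/\|\Phi_B(N,\omega)|_{E_d(\omega)}\|$ can be as small as $(M\|B\|_\infty)^{-N}$ on a positive-measure set ($M$ a bound on $\|B(\omega)^{-1}\|$), which typically overwhelms the budget. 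The actual Millionshchikov mechanism, as instantiated in Lemma \ref{Perturbationalongashortperiod} and Proposition \ref{Millionschikov} of this paper, does not preserve the given decomposition: at tower checkpoints it rotates candidate directions into alignment with the extremal singular directions of the $N$-step map, thereby \emph{rebuilding} a measurable invariant flag that is uniformly dominated by construction. If you relax Stage 2 so that the invariant one-dimensional bundle $E_d'(\omega)$ and its complement are allowed to be created by such rotations rather than pinned to $B$'s Oseledets splitting, your three-stage scheme can likely be salvaged; pinned as written, Stage 2 fails.
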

%
%

%
%
%
%

%
\subsubsection{Dominated splitting of random compact operators on an infinite-dimensional Hilbert space}
In what follows, we introduce the notion of dominated splitting of Oseledets-Ruelle decomposition for random compact operators on an infinite-dimensional Hilbert space, see \cite{Bessa2008}. Let $T\in \mathcal K_{\infty}(\Omega;\mathcal H)$ with a nontrivial Oseledets-Ruelle decomposition 
	\begin{equation*}
	\mathcal H= F^{\infty}_{\omega}(T)\oplus \bigoplus_{i=1}^{s(T)} \mathcal O^i_{\omega}(T).
	\end{equation*}
For each $k\in\{1,\dots,s(T)\}$, the decomposition corresponding to the first $k$ Lyapunov exponents of $T$ is given by 
\begin{equation}
 	\mathcal H= F^{k}_{\omega}(T)\oplus \bigoplus_{i=1}^{k} \mathcal O^i_{\omega}(T),\quad\hbox{where }  F^{k}_{\omega}(T):= F^{\infty}_{\omega}(T)\oplus \bigoplus_{i=k+1}^{s(T)} \mathcal O^i_{\omega}(T). 
\end{equation}
\begin{definition}[Dominated splitting of random compact operators on an infinite-dimensional Hilbert space]\label{Dominatedsplliting}
The Oseledets-Ruelle decomposition corresponding to the first $k$ Lyapunov exponents of $T\in  \mathcal K_{\infty}(\Omega;\mathcal H)$ is said to be a \emph{dominated splitting} if there exist $\delta>0$ and $K,\alpha>0$ such that 
\begin{itemize}
    \item [(d1)] $\|T(\omega)v\|\geq \delta \|v\|$ for all $v\in \bigoplus_{i=1}^{k} \mathcal O^i_{\omega}(T)\setminus\{0\}$,
    \item [(d2)] $\frac{\|T_{\omega}^n v\|}{\|v\|}\geq K e^{\alpha n}
    \frac{\|T_{\omega}^n u\|}{\|u\|}$ for all $n\in\N$, $v\in \bigoplus_{i=1}^{k} \mathcal O^i_{\omega}(T)\setminus\{0\}$ and $u\in F^{k}_{\omega}(T)\setminus\{0\}$.
\end{itemize}
The Oseledets-Ruelle decomposition of $T$ is said to be a \emph{dominated splitting} if there exists $k\in\{1,\dots,s(T)\}$ such that the Oseledets-Ruelle decomposition corresponding to the first $k$ Lyapunov exponents of $T$ is dominated.
\end{definition}
\begin{remark}\label{Angleseparation_Remark} 
Suppose that the Oseledets-Ruelle decomposition corresponding to the first $k$ Lyapunov exponents of $T\in  \mathcal K_{\infty}(\Omega;\mathcal H)$ is dominated with positive constants $\delta,K,\alpha$ as in the above definition. Let $N$ be a natural number such that $Ke^{\alpha N}\geq 2$. Let
\begin{equation}\label{Constanteta}
\eta:=
\min
\left\{1, \frac{\delta^{2N}}{2 K^2e^{2\alpha N} \|T\|_{\infty}^{2N}}\right\}.
\end{equation}
Then, the following statements hold:

\noindent 
\emph{Claim (i)}: For all  $v\in \bigoplus_{i=1}^{k} \mathcal O^i_{\omega}(T)\setminus\{0\}$ and $u\in F^{k}_{\omega}(T)\setminus\{0\}$ we have 
\begin{equation}\label{Angleseparation_EqNew}
\left|1- \frac{\langle v, u \rangle}{\|v\|\|u\|}\right|
\geq  
\eta.
\end{equation}
We now verify the above claim. Since $\frac{\langle v, u \rangle}{\|v\|\|u\|}$ remains the same when $v$, $u$ are replaced by $\frac{v}{\|v\|}, \frac{u}{\|u\|}$, respectively.  Thus, we can assume additionally that $\|v\|=\|u\|=1$. We write $v=\gamma u+\zeta$, where $\gamma\in \R, \zeta\in u^{\perp}$. Since $\|v\|=\|u\|=1$ it follows that $\gamma^2+\|\zeta\|^2=1$. Thus, 
\begin{equation}\label{Angleseparation_EqNew_01}
\left|1- \frac{\langle v, u \rangle}{\|v\|\|u\|}\right|
=
|1-\gamma|
\geq \frac{\|\zeta\|^2}{2}.
\end{equation}
Using (D2) and the fact that $|\gamma|\leq 1$, we obtain 
\[
\|T^N_{\omega} v\| \geq K e^{\alpha N} \|T^N_{\omega}u\|
\geq K e^{\alpha N} \left(\|T^N_{\omega} v\|-\|T^N_{\omega} \zeta\|\right),
\]
which together with the fact that $Ke^{\alpha N}\geq 2$ and (D1) implies that 
\[
\|\zeta\|
\geq \frac{\|T^N_{\omega}v\|}{Ke^{\alpha N} \|T\|_{\infty}^N}
\geq \frac{\delta^N}{Ke^{\alpha N} \|T\|_{\infty}^N}.
\]
Combining the above inequality and \eqref{Angleseparation_EqNew_01}, inequality \eqref{Angleseparation_EqNew} is verified.  

\noindent
\emph{Claim (ii)}: Let $\xi=v+u$ with $v\in \bigoplus_{i=1}^{k} \mathcal O^i_{\omega}(T)\setminus\{0\}, u\in F^{k}_{\omega}(T)\setminus\{0\}$ and $\|\xi\|=1$. Then, 
\begin{equation}\label{Consequence_upperestimate}
\|v\|,\|u\|\leq \frac{1}{\eta}.    
\end{equation}
To see the above inequality, using $\|v+u\|=1$ and \eqref{Angleseparation_EqNew} we obtain that 
\begin{align*}
1=\|v\|^2+2 \langle v, u \rangle +\|u\|^2
& \geq  
\|v\|^2+2 (-1+\eta)\|v\|\|u\|  +\|u\|^2,\\[1ex]
& = (2\eta-\eta^2) \|v\|^2+ (\|u\|-(1-\eta)\|v\|)^2,
\end{align*}
which implies that $\|v\| \leq \frac{1}{\eta}$. Similarly, we also have $\|h\| \leq \frac{1}{\eta}$. Thus, \eqref{Consequence_upperestimate} is proved.
\end{remark}
Based on the notion of dominated splitting above, we introduce a notion of integral separation of the restriction of random compactor operators on the Oseledets-Ruelle subspaces corresponding to their first $k$ Lyapunov exponents.
\begin{definition}[Integral  separation of random compact operators on an infinite-dimensional Hilbert space]\label{IntegralSeparation_Hilbertspace}
For each $k\in \N$, the restriction of a random compact operator $T\in\cK_{\infty}(\Omega,\cH)$ on the Oseledets-Ruelle subspaces corresponding to its first $k$ Lypaunov exponent  is said to be integrally separated if following properties hold
\begin{itemize}
\item [(i1)] The Oseledets-Ruelle subspaces corresponding to the first $k$ Lyapunov exponent of $T$ are simple, i.e.
\[
\dim \mathcal O^1_{\omega}(T)=\dots=\dim \mathcal O^k_{\omega}(T)=1,
\]
\item [(i2)] The Oseledets-Ruelle decomposition corresponding to the first $k$ Lyapunov exponents of $T$ is dominated,
\item [(i3)] The restriction of $T$ on the invariant subspace $\bigoplus_{i=1}^k \cO^i_{\omega}(T)$ is integrally separated. 
\end{itemize}
\end{definition}
We show below the density of random compact linear operators having dominated splitting and the restriction of these random compact linear operators on the dominated random subspaces are integrally separated.  This result provides a natural extension of the work in \cite{Cong05} from random dynamical systems in finite-dimensional spaces to infinite-dimensional spaces. 
\begin{theorem}[Density of integrally separated random compact linear operators]\label{DensityIntegralSeparation_Hilbertspace}
Let $\mathcal D_k$ be the set of random compact operators $T\in\cK_{\infty}(\Omega,\cH)$ such that the restriction of them on the Oseledets-Ruelle subspaces corresponding to their first $k$ Lyapunov exponents are integrally separated. Then, the set $\mathcal D_k$ is dense in $\cK_{\infty}(\Omega,\cH)$. 
\end{theorem}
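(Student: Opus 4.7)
The plan is, given $T \in \cK_\infty(\Omega, \cH)$ and $\eps > 0$, to construct a perturbation $\widetilde{T} \in \cD_k$ with $\|\widetilde{T} - T\|_\infty \leq \eps$ by reducing the problem to the finite-dimensional density result of Theorem \ref{Density_IntegralSeparation} and then transferring an integrally separated finite-dimensional cocycle back into the full Hilbert-space setting.

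First, we would produce a finite-rank intermediate $T_1$ with $\|T_1 - T\|_\infty \leq \eps/3$ whose non-trivial dynamics is confined to an equivariant $d$-dimensional subspace $\mathcal{V}_\omega$ with $d \geq k$. Specifically, combining Proposition \ref{Completeness} (density of finite-rank operators) with the argument used in the proof of Theorem A(i), we can arrange that $T_1$ has $d$ simple Lyapunov exponents $\lambda_1 > \cdots > \lambda_d$, that $\bigoplus_{i=1}^d \cO^i_\omega(T_1) = \mathcal{V}_\omega$, and crucially that $T_1|_{F^\infty_\omega(T_1)} \equiv 0$ (so that $F^\infty_\omega(T_1) = \ker T_1(\omega)$). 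Choosing a measurable orthonormal basis of $\mathcal{V}_\omega$ then converts $T_1|_{\mathcal{V}_\omega}$ into a bounded matrix cocycle $A \in \cL^\infty(\Omega, \R^{d \times d})$.

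Next, we apply Theorem \ref{Density_IntegralSeparation} to replace $A$ by an integrally separated cocycle $B$ arbitrarily close to $A$, and lift $B$ back to a random compact operator $T_2$ via the same basis while keeping $T_2 \equiv 0$ on the complement; choosing the matrix perturbation sufficiently small yields $\|T_2 - T_1\|_\infty \leq \eps/3$. Since $B$ is integrally separated with one-dimensional blocks $E_i(\omega)$, these lift to one-dimensional Oseledets-Ruelle subspaces $\cO^i_\omega(T_2)$, verifying condition (i1) of Definition \ref{IntegralSeparation_Hilbertspace}, while condition (i3) follows from Remark \ref{Checkingintegralseparation} applied to the top-$k$ restriction of $B$. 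For the dominated splitting (i2), condition (d1) is immediate from the integral-separation bound \eqref{Integralseparaction_C1}; to establish (d2), we decompose any $u \in F^k_\omega(T_2)$ as $u = u_1 + u_2$ with $u_1 \in \bigoplus_{i=k+1}^d \cO^i_\omega(T_2)$ and $u_2 \in F^\infty_\omega(T_2)$, observe that $T_2|_{F^\infty} \equiv 0$ forces $T_2^n u = T_2^n u_1$ for $n \geq 1$, and invoke the exponential gap between $\bigoplus_{i=1}^k$ and $\bigoplus_{i=k+1}^d$ coming from integral separation (combined with Remark \ref{Checkingintegralseparation}) together with the uniform angle separation that integral separation implies, which gives $\|u_1\| \leq C\|u\|$.

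The principal obstacle is the first step: engineering $T_1$ so that $T_1|_{F^\infty_\omega(T_1)} \equiv 0$. Without this reduction, $F^\infty$ contributes a subexponentially small but non-uniform term to $\|T_\omega^n|_{F^k_\omega}\|$ (the Oseledets-Ruelle theorem only provides almost-sure convergence of $\frac{1}{n}\log\|T_\omega^n|_{F^\infty_\omega}\|$ to $-\infty$), and securing the uniform constants $K,\alpha$ in (d2) would require a delicate Egorov- or Halmos-Rokhlin-type perturbation. Forcing $T_1$ to be block-diagonal with a vanishing off-block---by first replacing $T$ with a finite-rank operator whose range lies in an equivariant subspace $\mathcal{V}_{\theta\omega}$, and only then invoking the finite-dimensional argument of Theorem A(i) on the restriction to $\mathcal{V}_\omega$---sidesteps this issue entirely and reduces the whole statement to the finite-dimensional integral separation machinery of Theorem \ref{Density_IntegralSeparation}.
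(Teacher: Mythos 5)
Your overall plan—reduce to a finite‑rank operator whose dynamics is confined to an equivariant finite‑dimensional subspace, apply Theorem~\ref{Density_IntegralSeparation} to the resulting matrix cocycle, and lift back—is a reasonable and different route from the paper's. But you correctly identify the critical obstacle and then claim to sidestep it without actually doing so. The gap is real: there is no obvious way to construct a small perturbation $T_1$ of an arbitrary $T$ with $T_1|_{F^\infty_\omega(T_1)} \equiv 0$ and a finite‑dimensional equivariant $\mathcal V_\omega$ with $\bigoplus_{i=1}^d \cO^i_\omega(T_1) = \mathcal V_\omega$. Two natural attempts both fail. If you take the finite‑rank approximation of Proposition~\ref{Completeness}(ii), its kernel is $\mathrm{Span}\{f_1(\omega),\dots,f_{k(\omega)}(\omega)\}^\perp$ where the $f_i(\omega)$ are singular vectors of $T(\omega)$; these spans are \emph{not} equivariant, their dimension $k(\omega)$ is a genuinely random, unbounded variable, and the range of $T(\omega)$ on this span has no reason to land in $\mathrm{Span}\{f_1(\theta\omega),\dots\}$. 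If instead you first obtain (via the argument of Theorem~A(i)) a $T'$ with finitely many simple Lyapunov exponents and set $\mathcal V_\omega := \bigoplus_{i=1}^d \cO^i_\omega(T')$, then the operator $T_1$ that equals $T'$ on $\mathcal V_\omega$ and vanishes on $F^\infty_\omega(T')$ differs from $T'$ by $T'(\omega)$ restricted to $F^\infty_\omega(T')$, and the Oseledets--Ruelle theorem gives only $\frac{1}{n}\log\|T'^n_\omega|_{F^\infty_\omega}\| \to -\infty$, \emph{not} smallness of $\|T'(\omega)|_{F^\infty_\omega(T')}\|$ — that one‑step norm can be comparable to $\|T'\|_\infty$. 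So setting the $F^\infty$‑block to zero is an $O(1)$ perturbation, not an $O(\eps)$ one, and your ``sidestep'' re‑encounters exactly the uniformity problem you flagged. Moreover, the phrase ``first replacing $T$ with a finite‑rank operator whose range lies in an equivariant subspace $\mathcal V_{\theta\omega}$'' is circular as stated: equivariance is a property of a family of subspaces with respect to a cocycle, and you would need to already have the cocycle pinned down to speak of its equivariant subspaces, yet you are constructing the cocycle to match the subspaces.

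The paper resolves the uniformity problem very differently. It never attempts to annihilate the $F^\infty$ block; instead, via Corollary~\ref{Corollary_Density} (which combines Bessa--Carvalho's Theorem~\ref{Genericity_Dichotomy} with Theorem~A(iii) to rule out the null‑operator alternative), one may start from a $T$ whose Oseledets--Ruelle decomposition is already dominated in the sense of Definition~\ref{Dominatedsplliting}. Condition~(d2) of that definition furnishes precisely the uniform constants $K,\alpha$ controlling the growth on $F^k_\omega(T)$ (which includes $F^\infty_\omega(T)$) relative to the growth on $\bigoplus_{i=1}^k\cO^i_\omega(T)$. The paper then perturbs $T$ only on the finite‑dimensional fibre $\bigoplus_{i=1}^k \cO^i_\omega(T)$ using Theorem~\ref{Density_IntegralSeparation}, keeping $T$ unchanged on $F^k_\omega(T)$, and checks that domination survives this restricted perturbation (Lemma~\ref{Density_FirstLemma}); this gives the case $k=1$, and the general case follows by induction on $k$, applying the base case to the restriction of $T$ to $F^k_\omega(T)$. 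If you want to keep your direct (non‑inductive) strategy, you would need to likewise begin from the dominated dichotomy of Theorem~\ref{Genericity_Dichotomy} and Theorem~A(iii) to obtain the uniform constants on the $F^\infty$ block, rather than trying to kill that block outright.
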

The main ingredient in the proof of the above theorem is from \cite{Bessa2008} on the genericity of either the products of random compact operators converge to the null operator or the Oseledets-Ruelle decomposition is dominated. 
\begin{theorem}\label{Genericity_Dichotomy} There exists a residual set $\mathcal R\subset \cK_{\infty}(\Omega;\mathcal H)$ such that for any $T\in\mathcal R$ either the limit $\lim_{n\to\infty}\left(\left(T_{\omega}^n\right)^{\rT} T_{\omega}^n\right)^{\frac{1}{2n}}$ is the null operator or the Oseledets-Ruelle decomposition of $T$ is dominated.
\end{theorem}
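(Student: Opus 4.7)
The plan is to implement a Mañé--Bochi--Bessa style Baire category argument adapted to random compact operators on an infinite-dimensional Hilbert space. The strategy is to produce a residual set on which the partial sums of Lyapunov exponents depend continuously on $T$, and then to argue that at any such continuity point the Oseledets--Ruelle decomposition must be dominated at some level unless all Lyapunov exponents equal $-\infty$.

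\emph{Step 1 (Upper semicontinuity of Lyapunov sums).} For each $k \in \N$, define
\[
L_k : \cK_{\infty}(\Omega;\mathcal H) \to \R \cup \{-\infty\}, \qquad L_k(T) := \sum_{i=1}^{k}\lambda_i(T),
\]
with the convention that $\lambda_i(T)=-\infty$ once $i$ exceeds the number of finite Lyapunov exponents. Applying Theorem \ref{MET_HilbertSpace} to the $k$-th exterior power $\wedge^k T$, together with Kingman's subadditive ergodic theorem, one has
\[
L_k(T) = \inf_{n\geq 1}\tfrac{1}{n}\int_\Omega \log\|\wedge^k T_{\omega}^n\|\, d\mP(\omega).
\]
For fixed $n$, the integrand is continuous in $T$ in the $L^\infty$ topology, and a uniform bound by $n\log(1+\|T\|_\infty)^k$ lets Fatou produce an upper semicontinuous integral. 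An infimum of upper semicontinuous functions is upper semicontinuous, so $L_k$ is upper semicontinuous on $(\cK_\infty(\Omega;\mathcal H),\|\cdot\|_\infty)$.

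\emph{Step 2 (Baire).} Since $(\cK_\infty(\Omega;\mathcal H),\|\cdot\|_\infty)$ is a Banach space, hence a Baire space, and each $L_k$ is upper semicontinuous into $[-\infty,\infty)$, the set of continuity points $\mathcal C_k:=\{T : L_k \text{ is continuous at } T\}$ is residual. Setting
\[
\mathcal R := \bigcap_{k\geq 1}\mathcal C_k,
\]
a countable intersection of residual sets in a Baire space yields a residual set $\mathcal R$.

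\emph{Step 3 (Dichotomy at a continuity point).} Fix $T\in \mathcal R$. If $\lambda_1(T)=-\infty$, then $\Lambda_\omega(T)=0$ a.s.\ and we are in the first alternative. Otherwise, choose any index $k_0$ at which a spectral gap occurs, i.e.\ $\lambda_{k_0}(T) > \lambda_{k_0+1}(T)$ (allowing $\lambda_{k_0+1}(T)=-\infty$ when $s(T)=k_0$); such an index always exists when $\lambda_1(T)>-\infty$. The claim to establish is that the Oseledets--Ruelle decomposition of $T$ at level $k_0$ is dominated in the sense of Definition \ref{Dominatedsplliting}.

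\emph{Step 4 (Main obstacle: the perturbation lemma).} The heart of the argument is the contrapositive assertion: \emph{if the splitting $\mathcal H=\bigoplus_{i=1}^{k_0}\mathcal O^i_\omega(T)\oplus F^{k_0}_\omega(T)$ fails to be dominated, then $T$ is not a continuity point of $L_{k_0}$.} The proof would proceed as in \cite{Bessa2008}: failure of domination yields, for every $N$, a set of positive measure of base points $\omega$ where either the angle between the top-$k_0$ subspace and its complement is arbitrarily small or the ratio of the smallest singular value on $\bigoplus_{i\leq k_0}\mathcal O^i_\omega$ to the largest on $F^{k_0}_\omega$ over $N$ iterates fails to exceed $2$. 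On a Rokhlin tower concentrated at such base points (cf.\ Theorem \ref{HR_Lemma} and the constructions used in Proposition \ref{TechnicalProposition_1} and Proposition \ref{Millionschikov}), one performs a Millionshchikov-type rotation that mixes a vector from the top-$k_0$ subspace with a vector in $F^{k_0}$ having strictly smaller growth rate. Iterating this idea across the tower produces a perturbation $S$ with $\|S-T\|_\infty\leq \eps$ and
\[
L_{k_0}(S)\leq L_{k_0}(T)-\delta
\]
for some $\delta>0$ that depends only on the lack of domination, not on $\eps$. Upper semicontinuity and this uniform drop contradict continuity of $L_{k_0}$ at $T$, forcing a dominated splitting at level $k_0$.

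The principal technical difficulty lies in Step 4: in contrast to the finite-dimensional Mañé--Bochi argument, the complementary subspace $F^{k_0}_\omega(T)$ is infinite dimensional and carries all exponents from $\lambda_{k_0+1}$ down to $-\infty$, so the rotation must be carried out while preserving compactness of the perturbed operator in the $L^\infty$ topology and while controlling the integrated logarithm $\int\log\|\wedge^{k_0}S^N_\omega\|\,d\mP$ rather than just a pointwise quantity. This is managed by first localizing the perturbation to a Rokhlin tower (so the global $L^\infty$ change is small) and by combining the rotation with a finite-rank truncation on $F^{k_0}_\omega(T)$ to keep $S$ compact, in the spirit of the perturbations constructed in Section \ref{Prepartoryresults}.
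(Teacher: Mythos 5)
The paper does not actually prove this statement: Theorem \ref{Genericity_Dichotomy} is imported from the reference \cite{Bessa2008} (Bessa--Carvalho), and the only ``proof'' in the paper is the citation. So there is no internal argument to compare with; your outline is essentially a restatement of the Ma\~n\'e--Bochi--Bessa strategy that the cited source carries out. Your Steps 1--2 are fine and standard: $L_k(T)=\sum_{i\le k}\lambda_i(T)=\inf_n\frac1n\int\log\|\wedge^k T^n_\omega\|\,d\mP$ (via Theorem \ref{Connection} and subadditivity), upper semicontinuity follows from the uniform upper bound and reverse Fatou, and continuity points of an upper semicontinuous function form a residual set in the Banach space $\cK_\infty(\Omega;\cH)$.

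The genuine gap is Step 4, which is the entire mathematical content of the theorem and is only described, not proved. You state the key perturbation lemma (``failure of domination at a gap index $k_0$ forces a uniform drop $L_{k_0}(S)\le L_{k_0}(T)-\delta$ with $\|S-T\|_\infty\le\eps$, $\delta$ independent of $\eps$'') and then defer its proof to \cite{Bessa2008}; the tower construction, the quantitative rotation estimate in the presence of an infinite-dimensional complement $F^{k_0}_\omega(T)$ carrying exponents accumulating at $-\infty$, the preservation of compactness and strong measurability of the perturbed operator, and the passage from a pointwise rotation to a drop of the \emph{integrated} exterior-power norm are all nontrivial and none of them is carried out. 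In addition, ``dominated'' in this paper (Definition \ref{Dominatedsplliting}) includes condition (d1), a uniform lower bound $\|T(\omega)v\|\ge\delta\|v\|$ on the fast bundle; your contrapositive only attacks failures of the (d2)-type gap and small angles, so even granting the rotation lemma you have not explained why a continuity point satisfies (d1). As written, the proposal is a correct road map of the known proof, but not a proof: the residual-set bookkeeping is done, while the perturbation theorem that makes the dichotomy true is assumed.
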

Combining the above theorem and Theorem A (iii), we have the following corollary. 
\begin{corollary}\label{Corollary_Density}
 There exists a dense set $\mathcal D\subset \cK_{\infty}(\Omega;\mathcal H)$ such that for any $T\in\mathcal D$ the Oseledets-Ruelle decomposition of $T$ is dominated.
\end{corollary}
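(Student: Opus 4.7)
The plan is to combine Theorem \ref{Genericity_Dichotomy} with the nowhere density of $\mathcal N$ from Theorem A(iii) in the standard Baire-category fashion. Specifically, I would let $\mathcal R\subset \cK_{\infty}(\Omega;\mathcal H)$ be the residual set provided by Theorem \ref{Genericity_Dichotomy}, and define
\[
\mathcal D:=\mathcal R\cap \bigl(\cK_{\infty}(\Omega;\mathcal H)\setminus \overline{\mathcal N}\bigr).
\]
The idea is that $\mathcal R$ gives the dichotomy (null operator / dominated splitting), and intersecting with the complement of $\overline{\mathcal N}$ eliminates the bad alternative.

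First I would verify that $\mathcal D$ is dense. By Theorem A(iii), $\mathcal N$ is nowhere dense in $\cK_{\infty}(\Omega;\mathcal H)$, so $\overline{\mathcal N}$ has empty interior, i.e. its complement $\cK_{\infty}(\Omega;\mathcal H)\setminus \overline{\mathcal N}$ is open and dense. Since $(\cK_{\infty}(\Omega;\mathcal H),\|\cdot\|_{\infty})$ is a Banach space (and in particular a Baire space), the intersection of the residual set $\mathcal R$ with a dense open set is again residual, hence dense.

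Next I would check that every $T\in\mathcal D$ has dominated Oseledets–Ruelle decomposition. By definition, $T\in\mathcal R$, so Theorem \ref{Genericity_Dichotomy} gives one of two alternatives: either $\lim_{n\to\infty}\bigl((T_\omega^n)^{\rT}T_\omega^n\bigr)^{1/(2n)}=0$, which by the definition \eqref{Nullset} places $T$ in $\mathcal N\subseteq\overline{\mathcal N}$, or the Oseledets–Ruelle decomposition of $T$ is dominated. Since $T\in\mathcal D$ explicitly excludes membership in $\overline{\mathcal N}$, the first alternative is impossible, and the dominated splitting must hold.

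There is no real obstacle here: the corollary is essentially a soft consequence. The only point worth a sentence of care is that Theorem \ref{Genericity_Dichotomy} merely produces a residual (hence dense) set on which the dichotomy holds, and without Theorem A(iii) one cannot a priori rule out that the entire residual set falls into the null-operator alternative. It is precisely the nowhere density of $\mathcal N$ that upgrades the dichotomy into a genuine density statement for dominated splittings.
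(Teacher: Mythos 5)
Your proof is correct and is exactly the intended combination of Theorem \ref{Genericity_Dichotomy} with Theorem A(iii): intersect the residual set from the dichotomy with the open dense complement of $\overline{\mathcal N}$, observe this is still residual hence dense, and note the null-operator alternative is excluded on it. The paper states the corollary with only a one-line pointer to the same two ingredients, so your argument simply fills in the routine Baire-category details.
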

We improve the above Corollary by replacing the part "the Oseledets-Ruelle decomposition of $T$ is dominated" by "the Oseledets-Ruelle decomposition corresponding to the first Lyapunov exponent of $T$ is dominated".
\begin{lemma}\label{Density_FirstLemma}
There exists a dense set $\mathcal D_1\subset \cK_{\infty}(\Omega;\mathcal H)$ such that for any $T\in\mathcal D_1$  the Oseledets-Ruelle decomposition corresponding to the first Lyapunov exponent of $T$ is dominated.
\end{lemma}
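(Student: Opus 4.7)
\emph{Plan.} Fix $T\in\cK_\infty(\Omega;\cH)$ and $\eps>0$. By Corollary~\ref{Corollary_Density}, after replacing $T$ by a random compact operator within $\eps/2$ in the $L^\infty$-norm we may assume that the Oseledets--Ruelle decomposition of $T$ is dominated at some level $k\in\{1,\dots,s(T)\}$. If $k=1$ there is nothing to prove; otherwise $k\geq 2$ and the idea is to perturb $T$ only on the $T$-invariant finite-rank sub-bundle $E_\omega:=\bigoplus_{i=1}^k\cO^i_\omega(T)$, using density of integral separation in finite dimensions (Theorem~\ref{Density_IntegralSeparation}), so as to split $E_\omega$ into $d=\dim E_\omega$ one-dimensional pieces with strictly ordered Lyapunov exponents, while keeping intact the original domination between $E_\omega$ and $F^k_\omega(T)$.

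Concretely, by the measurable selection results in the Appendix, choose a measurable orthonormal frame $(e_1(\omega),\dots,e_d(\omega))$ of $E_\omega$; then $T|_{E_\omega}$ is represented by some $A\in\cL^\infty(\Omega,\R^{d\times d})$. Apply Theorem~\ref{Density_IntegralSeparation} to find an integrally separated $B\in\cL^\infty(\Omega,\R^{d\times d})$ with $\|B-A\|_\infty$ arbitrarily small, and define $\tilde T$ by $\tilde T(\omega)v=T(\omega)v$ for $v\in F^k_\omega(T)$ and $\tilde T(\omega)e_i(\omega)=\sum_{j=1}^d B_{ji}(\omega)e_j(\theta\omega)$. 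Restricting the perturbation to a sublevel set $\{\|\Pi(\cdot)\|\leq M\}$ of positive probability, where $\Pi(\omega)$ is the projection onto $E_\omega$ along $F^k_\omega(T)$, exactly as in the proof of Theorem~A(i), we arrange $\|\tilde T-T\|_\infty\leq\eps/2$. Since the modification has finite rank, $\tilde T$ remains in $\cK_\infty(\Omega;\cH)$.

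It remains to verify that the Oseledets--Ruelle decomposition of $\tilde T$ at level~$1$ is dominated. The decomposition $\cH=E_\omega\oplus F^k_\omega(T)$ is $\tilde T$-invariant and the action on $F^k_\omega(T)$ is unchanged, so the Lyapunov spectrum of $\tilde T$ is the disjoint union of that of $B$ and that of $T|_{F^k_\omega(T)}$. For $\|B-A\|_\infty$ small enough the spectral gap $\lambda_k(T)>\lambda_{k+1}(T)$ coming from the domination of $T$ at level $k$ persists, so $\lambda_1(\tilde T)=\lambda_1(B)$ and $\dim\cO^1_\omega(\tilde T)=1$ by integral separation of $B$. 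Integral separation of $B$ yields exponential separation of $\cO^1_\omega(\tilde T)$ from $\bigoplus_{j=2}^d E_j(\omega)$, while the original dominated splitting of $T$ at level $k$, combined with the uniform angle estimate of Remark~\ref{Angleseparation_Remark}, yields exponential separation of $E_\omega$ from $F^k_\omega(T)$ under $\tilde T$. Combining these two estimates in the style of Remark~\ref{Checkingintegralseparation} produces the domination of $\cO^1_\omega(\tilde T)$ over $F^1_\omega(\tilde T)=\bigoplus_{j=2}^d E_j(\omega)\oplus F^k_\omega(T)$, and the lower bound (d1) in Definition~\ref{Dominatedsplliting} follows from the uniform lower bound \eqref{Integralseparaction_C1} for $B$.

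The main obstacle is the chained combination in this last step: one must turn the finite-dimensional integral separation of $B$ on $E_\omega$ together with the $T$-level-$k$ domination between $E_\omega$ and $F^k_\omega(T)$ into a single level-$1$ dominated splitting for $\tilde T$. This is the infinite-dimensional analogue of Remark~\ref{Checkingintegralseparation}, and it relies both on the uniform angle separation between the two invariant summands (Remark~\ref{Angleseparation_Remark}) and on the fact that Lyapunov exponents on the finite-dimensional dominated piece depend continuously on the perturbation, so that $\lambda_1(B)$ stays strictly larger than $\lambda_{k+1}(T)$.
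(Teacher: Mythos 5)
Your proposal follows essentially the same route as the paper: start from Corollary~\ref{Corollary_Density}, reduce to a $T$ whose Oseledets--Ruelle decomposition is dominated at some level $k$, pass to the finite-dimensional invariant bundle $E_\omega=\bigoplus_{i=1}^k\cO^i_\omega(T)$, apply Theorem~\ref{Density_IntegralSeparation} there, graft the perturbed cocycle back as $\tilde T$, and then combine the integral separation on $E_\omega$ with the original level-$k$ domination via the uniform angle estimate of Remark~\ref{Angleseparation_Remark} to obtain level-$1$ domination.

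One step, however, is both unnecessary and, as written, a genuine gap: you restrict the perturbation to a sublevel set $\{\|\Pi(\cdot)\|\leq M\}$ ``exactly as in the proof of Theorem~A(i)'', and then later invoke integral separation of $B$. In Theorem~A(i) the restriction to $U$ is legitimate because Theorem~\ref{Density} explicitly permits modifying $A$ only on a prescribed set of positive measure while still obtaining simple spectrum. Theorem~\ref{Density_IntegralSeparation} makes no such provision: it only asserts density of integrally separated cocycles, with nothing said about localizing the perturbation. So if you insist on setting $B=A$ outside $U$, you have no guarantee that $B$ is integrally separated, and the rest of your argument loses its foundation. Moreover, the restriction is superfluous in the present setting: since $T$ is dominated at level $k$, Claim~(ii) of Remark~\ref{Angleseparation_Remark} already gives a uniform bound $\|\Pi(\omega)\|\le 1/\eta$ for a.e.\ $\omega$, where $\eta$ depends only on the domination constants. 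This is precisely what the paper uses: choose $B$ close to $A$ uniformly on all of $\Omega$, so that $\|B(\omega)v-T(\omega)v\|\leq\frac{\eps\eta}{2}\|v\|$ for $v\in E_\omega$, and then the bound $\|v\|\leq 1/\eta$ gives $\|\tilde T-T\|_\infty<\eps$ without any localization. Once you drop the sublevel-set restriction and use the $\eta$-bound directly, your combination step (integral separation on $E_\omega$ chained with the $T$-level-$k$ domination, in the spirit of Remark~\ref{Checkingintegralseparation}) is exactly the paper's argument.
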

\begin{proof}
By Corollary  \ref{Corollary_Density}, it is sufficient to start with a $T\in \cK_{\infty}(\Omega;\mathcal H)$ such that the Oseledets-Ruelle decomposition of $T$ is dominated and we need to verify that for any $\eps>0$ there exists $\widetilde T\in \cK_{\infty}(\Omega;\mathcal H)$ such that $\|T-\widetilde T\|_{\infty}<\eps$ and the Oseledets-Ruelle decomposition corresponding to the first Lyapunov exponent of $\widetilde T$ is dominated. By Definition \ref{Dominatedsplliting} there exist $k\in\{1,\dots,s(T)\}$, $K,\alpha>0$ and $\delta\in (0,1)$ such that 
\begin{align}
\frac{\|T(\omega)v\|}{\|v\|}& \geq \delta 
\quad \hbox{ for } v\in \bigoplus_{i=1}^{k} \mathcal O^i_{\omega}(T)\setminus\{0\},\label{Angleseparation_Eq1}\\[1ex]
\frac{\|T_{\omega}^n v\|}{\|v\|}
& \geq K e^{\alpha n}
    \frac{\|T_{\omega}^n u\|}{\|u\|}
\quad \hbox{ for } n\in\N, v\in \bigoplus_{i=1}^{k} \mathcal O^i_{\omega}(T)\setminus\{0\}, u\in F^{k}_{\omega}(T)\setminus\{0\}.\label{Speedseparation}
\end{align}
Choose and fix an arbitrary $\eps>0$. Let $N\in\N$ such that $Ke^{\alpha N}\geq 2$. Choose $\eta$ as in \eqref{Constanteta} in Remark \ref{Angleseparation_Remark}. By invariance of the random subspace $\bigoplus_{i=1}^k \cO^i_{\omega}(T)$ (see Theorem \ref{MET_HilbertSpace}) and Theorem \ref{Density_IntegralSeparation} about density of integrally separated linear random dynamical systems, there exists a linear random map $B(\omega): \bigoplus_{i=1}^k \cO^i_{\omega}(T)\rightarrow \bigoplus_{i=1}^k \cO^i_{\theta \omega}(T)$ such that 
\begin{equation}\label{Estimate_Restriction}
\|B(\omega)v-T(\omega)v\|
\leq \frac{\eps \eta}{2}\|v\| \qquad \hbox{ for all } v\in \bigoplus_{i=1}^k \cO^i_{\omega}(T)
\end{equation}
and 
\begin{equation}\label{Estimate_Restriction_II}
\frac{\|B^N_{\omega}v\|}{\|v\|}
\geq \frac{2}{3}\frac{\|T^N_{\omega}v\|}{\|v\|} \qquad \hbox{ for all } v\in \bigoplus_{i=1}^k \cO^i_{\omega}(T) \setminus\{0\}
\end{equation}
and the subspace $\bigoplus_{i=1}^k \cO^i_{\omega}(T)$ can be decomposed as the direct sum of one dimensional random subspaces $\bigoplus_{j=1}^{\ell} E_j(\omega)$ such that for all $n\in\N$ $\omega\in \Omega$ and $v\in \bigoplus_{j=1}^i E_j(\omega)\setminus\{0\}, u\in \bigoplus_{j=i+1}^dE_{j}(\omega)\setminus\{0\}$, where $i=1,\dots,d-1$, we have 
\begin{equation}\label{Integralseparation_Density_Apply}
\frac{\|B^n_{\omega}v\|}{\|v\|}
\geq
L e^{\beta n}
\frac{\|B^n_{\omega}u\|}{\|u\|},
\end{equation}
where $L,\beta>0$ are independent with $n,\omega,v,u$. Combining \eqref{Speedseparation} and \eqref{Estimate_Restriction_II}, we obtain that 
\[
\frac{\|B^N_{\omega}v\|}{\|v\|}
\geq \frac{2}{3}\frac{\|T^N_{\omega}v\|}{\|v\|}
\geq 
\frac{4}{3}\frac{\|T^N_{\omega}u\|}{\|u\|}
\qquad \hbox{ for all } v\in \bigoplus_{i=1}^k \cO^i_{\omega}(T)\setminus\{0\}, u\in F^k_{\omega}(T)\setminus\{0\}.
\]
Hence, by invariance of $\bigoplus_{i=1}^k \cO^i_{\omega}(T)$ and $F^k_{\omega}(T)$ under $B(\omega)$ and $T(\omega)$ and by using the cocycle properties, i.e. $B^{n+m}_{\omega}=B^n_{\theta^m\omega}\; B^m_{\omega}$ and $T^{n+m}_{\omega}=T^n_{\theta^m\omega}\; T^m_{\omega}$, we have for any $n=sN+r$ with $0\leq r\leq N-1$ and $v\in \bigoplus_{i=1}^k \cO^i_{\omega}(T)\setminus\{0\}, u\in F^k_{\omega}(T)\setminus\{0\}$  
\begin{align}
    \frac{\|B^n_{\omega}v\|}{\|v\|}
& =
\frac{\|B^r_{\theta^{sN}\omega}B^{sN}_{\omega}v\|}{\|B^{sN}_{\omega}v\|}
\frac{\|B^{sN}_{\omega}v\|}{\|v\|}\notag  \geq 
\left(\frac{2}{3}\delta\right)^r \left(\frac{4}{3}\right)^s \frac{\|T^{sN}_{\omega}u\|}{\|u\|} \\[1ex]
& \geq 
\left(\frac{2}{3}\delta\right)^N \left(\frac{4}{3}\right)^{\frac{n}{N}}\frac{1}{\|T\|_{\infty}^r} \frac{\|T^{n}_{\omega}u\|}{\|u\|}\notag\\[1ex]
& \geq K_1 e^{\alpha_1 n} \frac{\|T^{n}_{\omega}u\|}{\|u\|},\label{Separation_Eq1}
\end{align}
where $K_1:=\left(\frac{2}{3}\delta\right)^N \frac{1}{(1+\|T\|_{\infty})^N}$, $\alpha_1:=\frac{\log\frac{4}{3}}{N}$. We now define the desired random compact operator $\widetilde T:\Omega\rightarrow \cK(\cH)$ as 
\begin{equation}\label{Desiredrandomoperator}
\widetilde T(\omega)v=
\left\{
\begin{array}{ll}
B(\omega) v    &  \hbox{ for } v\in \bigoplus_{i=1}^k \cO^i_{\omega}(T),\\[1ex]
T(\omega) v     &  \hbox{ for } v\in F^k_{\omega}. 
\end{array}
\right.
\end{equation}
To conclude the proof, we first verify that $\|T-\widetilde T\|_{\infty}<\eps$. For this purpose, let $\xi=v+u$ with $v\in \bigoplus_{i=1}^{k} \mathcal O^i_{\omega}(T)\setminus\{0\}, u\in F^{k}_{\omega}(T)\setminus\{0\}$ and $\|\xi\|=1$. By virtue of Claim (ii) of Remark \ref{Angleseparation_Remark}, we have $\|v\|\leq \frac{1}{\eta}$. This together with  \eqref{Estimate_Restriction} and \eqref{Desiredrandomoperator} implies that 
\begin{equation*}
\|T(\omega)\xi-\widetilde T(\omega)\xi\|
= 
\|T(\omega) v- B(\omega) v\|
\leq
\frac{\eps\eta}{2}\|v\|<\eps.
\end{equation*}
 Finally, we verify that the Oseledets-Ruelle decomposition corresponding to the first Lyapunov exponent of  $\widetilde T$ is dominated. For this purpose, let $v\in E_1(\omega)$ and $u \in \bigoplus_{i=2}^dE_i(\omega)\oplus F^k_{\omega}(T)$ with $\|v\|=\|u\|=1$. We write $u=u_1+u_2$ where $u_1\in \bigoplus_{i=2}^dE_i(\omega)$  and $u_2\in F^k_{\omega}(T)$. By Claim (ii) of  Remark \ref{Angleseparation_Remark}, we have
 $\|u_1\|,\|u_2\|\leq \frac{1}{\eta}$. Consequently, on one hand by \eqref{Integralseparation_Density_Apply} and \eqref{Desiredrandomoperator} we have \[
\|\widetilde T^n_{\omega}v\|
\geq Le^{\beta n} \frac{\|\widetilde T^n_{\omega}u_1\|}{\|u_1\|}
\geq \eta L e^{\beta n} \|\widetilde T^n_{\omega}u_1\|,
\]
and on the other hand by \eqref{Separation_Eq1}, \eqref{Desiredrandomoperator} we have 
\[
\|\widetilde T^n_{\omega}v\|
=
\|B^n_{\omega}v\|\geq K_1 e^{\alpha_1 n }\frac{\|T^{n}_{\omega}u_2\|}{\|u_2\|}
\geq 
\eta K_1 e^{\alpha_1 n} \|{\widetilde T}^{n}_{\omega}u_2\|.
\]
As a consequence, we derive that 
\begin{align}
\|\widetilde T^n_{\omega}v\|
& \geq \frac{1}{2}\left(
\eta L e^{\beta n} \|\widetilde T^n_{\omega}u_1\|
+
\eta K_1 e^{\alpha_1 n }\|{\widetilde T}^{n}_{\omega}u_2\|
\right)\notag\\[1ex]
& \geq 
\frac{\eta L + \eta K_1}{2} e^{\min(\beta,\alpha_1)n} \|{\widetilde T}^n_{\omega}u\|,\notag
\end{align}
which implies that the Oseledets-Ruelle decomposition corresponding to the first Lyapunov exponent of  $\widetilde T$ is dominated. The proof is complete.
\end{proof}
\begin{proof}[Proof of Theorem \ref{DensityIntegralSeparation_Hilbertspace}]
By Lemma \ref{Density_FirstLemma}, the assertion is proved for $k=1$. Suppose that the assertion holds for some $k\in \N$.  Let $T\in \mathcal D_k$ and $\eps>0$ be arbitrary. To conclude the proof, it is sufficient to find $\widetilde T\in \mathcal D_{k+1}$ such that $\|\widetilde T-T\|_{\infty}<\eps$.  By properties (i1), (i2), (i3) and  Definition \ref{Dominatedsplliting}, Definition \ref{IntegralSeparation} there exist  $K,\alpha>0$ and $\delta\in (0,1)$ such that for any $n\in\N$, $j\in\{1,\dots,i-1\}$
\begin{align}
\|T(\omega)v\|& \geq \delta \|v\|
\quad \hbox{ for } v\in \bigoplus_{i=1}^{k} \mathcal O^i_{\omega}(T)\setminus\{0\},\label{Angleseparation_Eq1_New}\\[1ex]
\frac{\|T_{\omega}^n v\|}{\|v\|}
& \geq K e^{\alpha n}
    \frac{\|T_{\omega}^n u\|}{\|u\|}
\quad \hbox{ for } v\in \bigoplus_{i=1}^{j} \mathcal O^i_{\omega}(T)\setminus\{0\}, u\in \bigoplus_{i=j+1}^{k} \mathcal O^i_{\omega}(T)\setminus\{0\},\label{Integralseparation_New}\\[1ex]
\frac{\|T_{\omega}^n v\|}{\|v\|}
& \geq K e^{\alpha n}
    \frac{\|T_{\omega}^n u\|}{\|u\|}
\quad \hbox{ for } v\in \bigoplus_{i=1}^{k} \mathcal O^i_{\omega}(T)\setminus\{0\}, u\in F^{k}_{\omega}(T)\setminus\{0\}.\label{Speedseparation_New}
\end{align}
Let $N\in\N$ such that $Ke^{\alpha N}\geq 2$. Choose $\eta$ as in \eqref{Constanteta} in Remark \ref{Angleseparation_Remark}. Thus, by Claim (i) of Remark \ref{Angleseparation_Remark} we have 
 \begin{equation}\label{Angleseparation_Eq_Apply1}
\left|1- \frac{\langle v, u \rangle}{\|v\|\|u\|}\right|
\geq  
\eta\qquad \hbox{for } v\in \bigoplus_{i=1}^{k} \mathcal O^i_{\omega}(T)\setminus\{0\}, u\in F^{k}_{\omega}(T)\setminus\{0\}. 
\end{equation}
Considering the restriction of $T(\omega)$ on $F^k_{\omega}(T)$ and in light of Lemma \ref{Density_FirstLemma}, there exists $S(\omega): F^k_{\omega}(T)\rightarrow F^k_{\theta\omega}(T)$ with an invariant decomposition $F^k_{\omega}(T)=\mathcal O^1_{\omega}(S)\oplus F_{\omega}^1(S)$, i.e. $S(\omega)\mathcal O^1_{\omega}(S)\subset \mathcal O^1_{\theta\omega}(S), S(\omega) F^1_{\omega}(S)\subset F^1_{\theta\omega}(S)$ such that the following properties hold:
\begin{itemize}
    \item [(p1)] $\|T(\omega)u-S(\omega)u\|< \eps \eta\|u\|$ for all $u\in F_{\omega}^k(T)$,
    \item [(p2)] $\|S^N_{\omega}u\|\leq \frac{3}{2}\|T^N_{\omega}u\|$ for all $u\in F_{\omega}^k(T)$,
    \item [(p3)]$\dim \mathcal O^1_{\omega}(S)=1$ and there exist $L,\beta>0$ such that 
\begin{equation}\label{PropertyP2}
\frac{\|S^n_{\omega}v\|}{\|v\|} \geq L e^{\beta n}   \frac{\|S^n_{\omega}u\|}{\|u\|} \quad\hbox{ for } v\in \mathcal O^1_{\omega}(S)\setminus\{0\}, u\in  F^1_{\omega}(S)\setminus\{0\}.
\end{equation}
\end{itemize}
Define $\widetilde T(\omega):\cH\rightarrow \cH$ as 
\begin{equation}\label{Newopearator}
\widetilde T(\omega)u
=
\left\{
\begin{array}{ll}
    T(\omega) u & \hbox{ for } u\in \bigoplus_{i=1}^k\mathcal O^i_{\omega}(T),  \\[1ex]
     S(\omega) u &  \hbox{ for } u\in F^{k}_{\omega}(T).
\end{array}
\right.
\end{equation}
To conclude the proof, we first show that $\|T-\widetilde T\|_{\infty}<\eps$ and then we show $\widetilde T\in \mathcal D_{k+1}$ by verifying (d1), (d2) and (d3) for $k+1$:\\

\noindent 
\emph{Verification of $\|T-\widetilde T\|_{\infty}<\eps$}: Let $\xi\in \cH$ be an arbitrary unit vector. We write $\xi=v+u$, where  $v\in \bigoplus_{i=1}^k\mathcal O^i_{\omega}(T)$ and $u\in F^{k}_{\omega}(T)$. By virtue of Claim (ii) of Remark \ref{Angleseparation_Remark}, we have $\|u\|, \|v\|\leq \frac{1}{\eta}$. Hence, by (p1) and \eqref{Newopearator} we have 
\[
\|\widetilde T(\omega) \xi- T(\omega)\xi\|
=
\|T(\omega)u-S(\omega)u\|
\leq \frac{\eps_1}{\eta}<\eps.  
\]

\noindent 
\emph{Verification of (i1) \& (i2)}: 
We first show that the decomposition 
\begin{equation}\label{DominatedDecomposition_I}
\cH= \left(\bigoplus_{i=1}^k\mathcal O^i_{\omega}(T) \oplus \mathcal O_{\omega}^1(S)\right) \oplus F^1_{\omega}(S)
\end{equation}
is dominated with respect to $\widetilde T$. For this purpose, let $v\in \bigoplus_{i=1}^k\mathcal O^i_{\omega}(T) \oplus \mathcal O_{\omega}^1(S)$ and $u\in F^1_{\omega}(S)$ with $\|v\|=\|u\|=1$. We write $v=v_1+v_2$, where $v_1\in  \bigoplus_{i=1}^k\mathcal O^i_{\omega}(T)$ and $v_2\in F^1_{\omega}(S)$. By \eqref{Newopearator} we have ${\widetilde T}^n_{\omega}v_1\in \bigoplus_{i=1}^k\mathcal O^i_{\theta^n\omega}(T)$ and ${\widetilde T}^n_{\omega}v_2\in F^k_{\theta^n\omega}(T)$. This together with \eqref{Angleseparation_Eq_Apply1} implies that 
\[
\left|1- \frac{\langle  {\widetilde T}^n_{\omega}v_1, {\widetilde T}^n_{\omega}v_2 \rangle}{\|{\widetilde T}^n_{\omega}v_1\|\|{\widetilde T}^n_{\omega}v_2\|}\right|
\geq  
\eta
\]
Being  analogous to the proof of Claim (ii) of Remark \ref{Angleseparation_Remark}, we have 
\[
\|{\widetilde T}^n_{\omega}v\|^2
=
\|{\widetilde T}^n_{\omega}v_1+{\widetilde T}^n_{\omega}v_2\|^2
\geq \eta \max\{\|{\widetilde T}^n_{\omega}v_1\|^2,\|{\widetilde T}^n_{\omega}v_2\|^2\},
\]
which together with \eqref{Speedseparation_New} and \eqref{PropertyP2} implies that
\[
\|{\widetilde T}^n_{\omega}v\|^2
\geq 
\eta\max\left\{
K^2 e^{2\alpha n} \|v_1\|^2\|{\widetilde T}^n_{\omega}u\|^2,  
K^2 e^{2\alpha n} \|v_2\|^2\|{\widetilde T}^n_{\omega}u\|^2
\right\}
\]
Since $\|v_1+v_2\|=1$ it follows that $\max\{\|v_1\|,\|v_2\|\}\geq \frac{1}{2}$. Thus, 
\[
\|{\widetilde T}^n_{\omega}v\|
\geq \frac{\sqrt{\eta} K e^{\alpha n}}{2} \|{\widetilde T}^n_{\omega}u\|,
\]
which yields that the decomposition \eqref{DominatedDecomposition_I} is dominated with respect to $\widetilde T$. Consequently, the Lyapunov exponent of $\widetilde T$ starting from any non-zero vector $v\in \left(\bigoplus_{i=1}^k\mathcal O^i_{\omega}(T) \oplus \mathcal O_{\omega}^1(S)\right)$ is strictly greater than the Lyapunov exponent of $\widetilde T$ starting from any non-zero vector $u\in F^1_{\omega}(S)$. Note that all subspaces $\mathcal O^i_{\omega}(T), i=1,\dots,k$ and $\mathcal O^1_{\omega}(S)$ are of dimension $1$ almost surely. By \eqref{Newopearator}, for $i=1,\dots,k$ we have 
\[
\lim_{n\to\infty}\frac{1}{n}\log\|{\widetilde T}^n_{\omega}v\|=\lambda_i(T) \qquad \hbox{ for } v\in \mathcal O^i(T)\setminus\{0\}.
\]
Thus, to prove the simplicity of the first $k+1$ Lyapunov exponent of $T$ it is sufficient to show that the decomposition $ \left(\bigoplus_{i=1}^k\mathcal O^i_{\omega}(T)\right) \oplus \mathcal O_{\omega}^1(S)$
is also dominated with respect to $\widetilde T$. For this purpose, let $v\in \bigoplus_{i=1}^k\mathcal O^i_{\omega}(T)$  and $u\in \mathcal O_{\omega}^1(S)$ with $\|v\|=\|u\|=1$. By \eqref{Newopearator}, we have 
\[
\|{\widetilde T}^N_{\omega}v\|=\|T^N_{\omega}v\|,\quad \|{\widetilde T}^N_{\omega}u\|=\|S^N_{\omega}u\|, 
\]
which together with  \eqref{Speedseparation_New}, the fact that $Ke^{\alpha N}\geq 2$ and (p2) implies that 
$
\|{\widetilde T}^N_{\omega}v\|
\geq \frac{4}{3} \|{\widetilde T}^N_{\omega}u\|$. Thus, by Remark \ref{Equivalence_DominatedSplitting} the decomposition $ \left(\bigoplus_{i=1}^k\mathcal O^i_{\omega}(T)\right) \oplus \mathcal O_{\omega}^1(S)$
is dominated with respect to $\widetilde T$. Consequently, (d1) and (d2) are verified.\\ 

\noindent 
\emph{Verification of (i3)}: By \eqref{Newopearator}, the restriction of $\widetilde T$ on $\bigoplus_{i=1}^k\mathcal O^i_{\omega}(T)$ is integrally separated. Furthermore,  the decomposition $ \left(\bigoplus_{i=1}^k\mathcal O^i_{\omega}(T)\right) \oplus \mathcal O_{\omega}^1(S)$
is dominated with respect to $\widetilde T$. Hence, by Remark \ref{Checkingintegralseparation} the restriction of $\widetilde T$ on  $ \left(\bigoplus_{i=1}^k\mathcal O^i_{\omega}(T)\right) \oplus \mathcal O_{\omega}^1(S)$ is integrally separated. The proof is complete.  
\end{proof}
\begin{theorem}\label{Integralseparation_Exteriorpower}
For any $T\in \mathcal D_k$, the Oseledets-Ruelle decomposition corresponding to the first Lyapunov exponent of $\Lambda^k T$ is dominated. 
\end{theorem}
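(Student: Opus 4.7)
The plan is to construct the dominated splitting for $\Lambda^k T$ explicitly from the Oseledets data of $T$. Let $e_i(\omega)$ be a measurable unit vector spanning the one-dimensional Oseledets subspace $\mathcal O^i_\omega(T)$ (which exists by (i1)), and define
\[
E(\omega) := \mathrm{span}\bigl\{e_1(\omega) \wedge \cdots \wedge e_k(\omega)\bigr\} \subset \Lambda^k \mathcal H.
\]
Using the direct-sum decomposition $\mathcal H = \bigoplus_{i=1}^k \mathcal O^i_\omega(T) \oplus F^k_\omega(T)$, the exterior power splits as
\[
\Lambda^k \mathcal H \;\cong\; \bigoplus_{p=0}^{k} \Lambda^p\!\Bigl(\bigoplus_{i=1}^k \mathcal O^i_\omega(T)\Bigr) \otimes \Lambda^{k-p} F^k_\omega(T),
\]
and the $p=k$ summand is exactly $E(\omega)$ (one-dimensional since $\dim\bigoplus_{i=1}^k \mathcal O^i_\omega(T) = k$). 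Let $F(\omega)$ be the sum of the summands with $0\le p\le k-1$. Both $E(\omega)$ and $F(\omega)$ are $\Lambda^k T$-invariant by invariance of the Oseledets filtration.

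Growth on $E(\omega)$: since $\Lambda^k T^n_\omega\bigl(e_1(\omega) \wedge \cdots \wedge e_k(\omega)\bigr) = T^n_\omega e_1(\omega) \wedge \cdots \wedge T^n_\omega e_k(\omega)$, its norm equals the $k$-volume of the parallelepiped spanned by the images. The pairwise angles between the one-dimensional subspaces $\mathcal O^i_{\theta^n\omega}(T)$ are uniformly bounded below: each pair is separated by the integral separation property (i3), which yields pairwise dominated splittings, and combined with (i2) one applies Claim~(i) of Remark~\ref{Angleseparation_Remark} to each pair. Hence the Gram determinant is uniformly bounded below and this volume is comparable, with constants independent of $n$ and $\omega$, to $\prod_{i=1}^k \|T^n_\omega e_i(\omega)\|$. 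Combined with (d1) for $T$, this yields a uniform lower bound of the form $\|\Lambda^k T(\omega) v\| \ge \widetilde\delta\,\|v\|$ for $v \in E(\omega)$.

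Growth on $F(\omega)$: a decomposable element of the $p$-th summand is a wedge containing exactly $k-p \ge 1$ factors from $F^k_\omega(T)$. By the dominated splitting (d2) of $T$ with constants $K,\alpha>0$, each such factor satisfies
\[
\frac{\|T^n_\omega f\|}{\|f\|} \;\le\; K^{-1} e^{-\alpha n}\,\frac{\|T^n_\omega v\|}{\|v\|}
\]
for any $v \in \bigoplus_{i=1}^k \mathcal O^i_\omega(T)\setminus\{0\}$. Applying this to the $k-p$ factors from $F^k_\omega(T)$ and using once more the uniform angle separation between $\bigoplus_i \mathcal O^i_\omega(T)$ and $F^k_\omega(T)$ (Claim~(ii) of Remark~\ref{Angleseparation_Remark}) to pass between wedge norms and products of factor norms, the growth of $\Lambda^k T^n_\omega$ on a decomposable element of the $p$-summand is at most $K^{-(k-p)} e^{-(k-p)\alpha n}$ times the growth on $E(\omega)$, times a constant depending only on the angle bounds.

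The main technical obstacle is to upgrade these decomposable-wedge estimates to a uniform estimate on arbitrary (not necessarily decomposable) elements of $F(\omega)$. This requires uniform angle separation between all the $p$-summands of the decomposition of $\Lambda^k \mathcal H$ above, which one deduces from the uniform angle separation of $\bigoplus_{i=1}^k \mathcal O^i_\omega(T)$ and $F^k_\omega(T)$ in $\mathcal H$ via standard exterior-power estimates (collected in the Appendix). Once this is established, for any $u \in F(\omega)$ decomposed as $u = \sum_{p=0}^{k-1} u_p$ with $u_p$ in the $p$-th summand, the angle separation gives $\|u_p\| \lesssim \|u\|$ uniformly, and combining with the per-summand growth bounds yields constants $K',\alpha'>0$ with
\[
\frac{\|\Lambda^k T^n_\omega v\|}{\|v\|} \;\ge\; K' e^{\alpha' n}\,\frac{\|\Lambda^k T^n_\omega u\|}{\|u\|}
\qquad \text{for } v \in E(\omega)\setminus\{0\},\; u \in F(\omega)\setminus\{0\},
\]
which together with the uniform lower bound $\|\Lambda^k T(\omega) v\| \ge \widetilde\delta \|v\|$ is exactly conditions (d1)--(d2) of Definition~\ref{Dominatedsplliting} for the first Lyapunov exponent of $\Lambda^k T$.
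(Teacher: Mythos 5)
Your decomposition $\Lambda^k\cH=E(\omega)\oplus F(\omega)$ is essentially the same one the paper uses (the paper calls them $\cE(\omega)$ and $\cF(\omega)$), and both proofs aim to bound the growth on $F(\omega)$ by $e^{-\alpha' n}$ times the growth on $E(\omega)$. The paper, however, first conjugates $T$ by an essentially bounded $P(\omega)$ (its boundedness is exactly the content of Remark~\ref{Angleseparation_Remark}) so that the invariant directions $\cO^i_\omega(T)$ become orthonormal and $F^k_\omega(T)$ becomes their orthogonal complement; after that the wedge-norm computations factor cleanly through the orthonormal basis. You work directly with the possibly skew frame, deferring the angle bookkeeping to the end, which is a legitimate alternative but costs you exactly the estimates the conjugation was designed to remove.

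There is, however, a genuine gap in your growth estimate on the $p$-summand. You bound the $k-p$ factors lying in $F^k_\omega(T)$ by (d2), but you say nothing about the $p$ factors $v_1,\dots,v_p\in\bigoplus_{i\le k}\cO^i_\omega(T)$. If you bound each of those individually by the operator norm, they contribute $\|T^n_\omega|_{\bigoplus\cO}\|^p\approx\|T^n_\omega e_1(\omega)\|^p$. Comparing with the $E(\omega)$ growth $\approx\prod_{i=1}^{k}\|T^n_\omega e_i(\omega)\|$, the ratio then contains the factor
$\prod_{i=2}^{p}\|T^n_\omega e_1(\omega)\|/\|T^n_\omega e_i(\omega)\|$,
which by integral separation (i3) grows \emph{exponentially}, potentially swamping the decay $e^{-\alpha(k-p)n}$ you obtain from (d2). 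The fix is to not bound the $\mathcal O$-factors one at a time: one must use that $\|T^n_\omega v_1\wedge\dots\wedge T^n_\omega v_p\|\le\|\Lambda^p T^n_\omega|_{\bigoplus\cO}\|\,\|v_1\wedge\dots\wedge v_p\|$ and then invoke (i3) to show $\|\Lambda^p T^n_\omega|_{\bigoplus\cO}\|\lesssim\prod_{i=1}^{p}\|T^n_\omega e_i(\omega)\|$ uniformly in $n$ and $\omega$. Integral separation is thus needed for the growth comparison itself, not only for the angle bounds as your sketch suggests. Finally, the "standard exterior-power estimates collected in the Appendix" that you invoke to upgrade from decomposable to general elements of $F(\omega)$ do not appear in the Appendix (Subsection~\ref{ExteriorPower} only records the definitions and Theorem~\ref{Connection}); those estimates would need to be proved.
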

\begin{proof}
Let $T\in\mathcal D_k$ be arbitrary.  By (d1) and (d3) of Definition \ref{IntegralSeparation_Hilbertspace}, there exist $K,\alpha,\delta>0$ such that 
\begin{align}
\|T(\omega)v\|& \geq \delta \|v\|
\quad \hbox{ for } v\in \bigoplus_{i=1}^{k} \mathcal O^i_{\omega}(T)\setminus\{0\},\label{Angleseparation_Eq1_New_Part2}\\[1ex]
\frac{\|T_{\omega}^n v\|}{\|v\|}
& \geq K e^{\alpha n}
    \frac{\|T_{\omega}^n u\|}{\|u\|}
\quad \hbox{ for } v\in \bigoplus_{i=1}^{j} \mathcal O^i_{\omega}(T)\setminus\{0\}, u\in \bigoplus_{i=j+1}^{k} \mathcal O^i_{\omega}(T)\setminus\{0\},\label{Integralseparation_New_Part2}\\[1ex]
\frac{\|T_{\omega}^n v\|}{\|v\|}
& \geq K e^{\alpha n}
    \frac{\|T_{\omega}^n u\|}{\|u\|}
\quad \hbox{ for } v\in \bigoplus_{i=1}^{k} \mathcal O^i_{\omega}(T)\setminus\{0\}, u\in F^{k}_{\omega}(T)\setminus\{0\}.\label{Speedseparation_New_Part2}
\end{align}
We now transform $T$ to a new random compact operator which still belongs to $\mathcal D_k$ and satisfies additionally that the Oseledets-Ruelle subspaces corresponding to the first $k$ Lyapunov exponents are orthogonal and also are orthogonal to the Oseledets-Ruelle subspaces corresponding to the remaining Lyapunov exponents. Let $e_{1}(\omega),\dots,e_{k}(\omega) $ be an orthonormal basis of $\bigoplus_{i=1}^k{\mathcal O}^i_{\omega}(T)$ and $\left(e_j(\omega)\right)_{j=k+1}^{\infty}$ an orthonomal basis of $\left(\bigoplus_{i=1}^k{\mathcal O}^i_{\omega}(T)\right)^{\perp}$. Thus, $\left(e_j(\omega)\right)_{j=1}^{\infty}$ is an orthonormal basis of $\cH$.
By (d2) of Definition \ref{IntegralSeparation_Hilbertspace}, $\dim \cO^1_{\omega}(T)=\dots=\cO^k_{\omega}(T)=1$.  For $i=1,\dots,k$, let $o_i(\omega)\in {\mathcal O}^i_{\omega}(T)$ be a random unit vector. Let $\left(f_j(\omega)\right)_{j=k+1}^{\infty}$ be an orthonormal basis of $F_{\omega}^k(T)$. Define a linear operator $P(\omega):\cH\rightarrow \cH$ as 
\begin{equation}\label{Transformation_Bounded}
P(\omega)e_i(\omega)
=
\left\{
\begin{array}{ll}
o_i(\omega) & \hbox{for } i=1,\dots,k,\\ [1ex]
f_i(\omega) & \hbox{for } i=k+1,\dots.
\end{array}
\right.
\end{equation}
By Remark \ref{Angleseparation_Remark},  the angle between subsapces $\bigoplus_{i=1}^k\mathcal O^i_{\omega}(T), F^k_{\omega}(T)$ and the angle between subspaces $\bigoplus_{i=1}^j\mathcal O^i_{\omega}(T), \bigoplus_{i=j+1}^k\mathcal O^i_{\omega}(T)$, where $j=1,\dots,k-1$, are uniformly separated from zero. Thus, $P(\omega)$ and its inverse $P(\omega)^{-1}$ are essentially bounded, i.e. there exists $M>0$ such that $\|P(\omega)\|, \|P(\omega)^{-1}\|\leq M$. Define $S:\Omega\rightarrow \cK(\cH)$ as 
\begin{equation}\label{Transformcompactoperator}
S(\omega)=P(\theta\omega)^{-1}T(\omega)P(\omega).
\end{equation}
Since $\|P(\omega)\|, \|P(\omega)^{-1}\|\leq M$ it follows that $S\in \mathcal D_k$. More concretely, the corresponding properties in \eqref{Angleseparation_Eq1_New_Part2}, \eqref{Integralseparation_New_Part2} and \eqref{Speedseparation_New_Part2} for $S$ are given, respectively, by 
\begin{align}
\|S(\omega)e_{i}(\omega)\|& \geq \frac{\delta}{M^2} 
\quad \hbox{ for } i=1,\dots,k,\label{Angleseparation_Normalform}\\[1ex]
\|S_{\omega}^n e_j(\omega)\|
& \geq \frac{K}{M^2} e^{\alpha n}
    \|S_{\omega}^n e_{j+1}(\omega)\|
\quad \hbox{ for } j=1,\dots,k-1,\label{Integralseparation_Normalform}\\[1ex]
\frac{\|S_{\omega}^n v\|}{\|v\|}
& \geq \frac{K}{M^2} e^{\alpha n}
    \frac{\|S_{\omega}^n u\|}{\|u\|}
\quad \hbox{ for } v\in  \mbox{Span}\{e_1(\omega),\dots,e_k(\omega)\}\setminus\{0\}, u\in F^{k}_{\omega}(T)\setminus\{0\}.\label{Speedseparation_Normalform}
\end{align}
Define 
\begin{align}
 \cE(\omega) & :=\mbox{Span}\{e_1(\omega)\wedge\dots\wedge e_k(\omega)\},\label{Stablespace}\\   
 \cF(\omega) & : =\mbox{Span}\big\{e_{i_1}(\omega)\wedge\dots\wedge e_{i_k}(\omega): (i_1,\dots, i_k)\in \mathcal I\},\label{Unstablespace}
\end{align}
where $\mathcal I:=\big\{\{i_1,\dots,i_k\}: i_1<\dots<i_k,\{i_1,\dots,i_k\}\not=\{1,\dots,k\}\big\}$. Then, as is mentioned in Subsection \ref{ExteriorPower}, $\Lambda^k\cH=\cE(\omega) \oplus  \cF(\omega)$  is a decomposition of $\Lambda^k\cH$. Furthermore,  by \eqref{Transformation_Bounded}, \eqref{Transformcompactoperator} this decomposition is invariant under $\Lambda^k S$. By \eqref{Transformcompactoperator} we have
\[
\Lambda^kS(\omega)=\Lambda^kP(\theta\omega)^{-1}\Lambda^kT(\omega)\Lambda^kP(\omega).
\]
Hence, to conclude the proof, it is sufficient to show that the decomposition $\Lambda^k\cH=\cE(\omega) \oplus  \cF(\omega)$  is dominated with respect to $\Lambda^k S$. Firstly, by \eqref{Angleseparation_Normalform} we have 
\[
\|\Lambda^kS(\omega) e_1(\omega)\wedge\dots\wedge e_k(\omega)\|\geq \frac{\delta^k}{M^{2k}},
\]
which verifies the property (d1) of Definition \ref{Dominatedsplliting} for the decomposition $\Lambda^k\cH=\cE(\omega) \oplus  \cF(\omega)$. Concerning the property (d2) of Definition \ref{Dominatedsplliting}, let $u\in \cF(\omega)$ be an arbitrary unit vector. We write $u=\sum_{\{i_1,\dots,i_k\}\in\mathcal I} \alpha_{i_1,\dots,i_k}e_{i_1}(\omega)\wedge\dots\wedge e_{i_k}(\omega)$. From $\|u\|=1$, we derive that $\sum_{\{i_1,\dots,i_k\}\in\mathcal I} \alpha_{i_1,\dots,i_k}^2=1$. By invariance of $e_i(\omega)$ under $S(\omega)$, i.e. $S(\omega)e_i(\omega)$ is parallel to $e_i(\theta\omega)$,  we have 
\begin{align}
\|\Lambda^k S^n_{\omega} u\|^2
& =
\sum_{\{i_1,\dots,i_k\}\in\mathcal I} \alpha_{i_1,\dots,i_k}^2\|S^n_{\omega} e_{i_1}(\omega)\wedge\dots S^n_{\omega} e_{i_k}(\omega)\|^2\notag\\
& = 
\sum_{\{i_1,\dots,i_k\}\in\mathcal I} \alpha_{i_1,\dots,i_k}^2\|S^n_{\omega} e_{i_1}(\omega)\|^2 \dots \|S^n_{\omega} e_{i_k}(\omega)\|^2\label{Exterior
_Eq1}.
\end{align}
By definition of the set of indices $\mathcal I$, we have $\{i_1,\dots,i_k\}\not=\{1,\dots,k\}$. Hence, the cardinality of the set $\{i_1,\dots,i_k\}\setminus\{1,\dots,k\}$ is at least $1$. Thus, by \eqref{Integralseparation_Normalform} and \eqref{Speedseparation_Normalform}
\begin{align}
\|S^n_{\omega} e_{i_1}(\omega)\| \dots \|S^n_{\omega} e_{i_k}(\omega)\|
& \leq \left(\frac{K}{M^2} e^{\alpha n}\right)^{\mbox{Card} \{i_1,\dots,i_k\}\setminus\{1,\dots,k\}} \|S^n_{\omega}e_1(\omega)\|\dots \|S^n_{\omega}e_k(\omega)\|\notag\\[1ex]
& \leq \widetilde K e^{\alpha n}\|S^n_{\omega}e_1(\omega)\|\dots \|S^n_{\omega}e_k(\omega)\|\notag, 
\end{align}
where $\widetilde K:=\min\left\{\left(\frac{K}{M^2}\right)^i: i=1,\dots,k\right\}$. This, together with \eqref{Exterior
_Eq1}, implies that 
\begin{align}
\|\Lambda^k S^n_{\omega} u\|^2
& \leq 
\sum_{\{i_1,\dots,i_k\}\in\mathcal I} \alpha_{i_1,\dots,i_k}^2 {\widetilde K}^2 e^{2\alpha n} \|S^n_{\omega}e_1(\omega)\|^2\dots \|S^n_{\omega}e_k(\omega)\|^2\notag\\[1ex]
& \leq 
{\widetilde K}^2 e^{2\alpha n} \|\Lambda^kS_{\omega}^n e_1(\omega)\wedge \dots \wedge e_k(\omega)\|^2,\notag
\end{align}
which verifies the property (d2) of Definition \ref{Dominatedsplliting} for the decomposition $\Lambda^k\cH=\cE(\omega) \oplus  \cF(\omega)$. The proof is complete.
\end{proof}
\subsection{Random compact operators having dominated splitting also processes closed proper convex cones}
The main ingredient in the proof of Theorem B is the following preparatory result on a connection between random compact operators processing closed proper convex cones and dominated splitting. More precisely, in the first direction we extend the work in \cite{Doan17} by showing that random compact operators whose the Oseledets-Ruelle decomposition corresponding to the first Lyapunov exponent are dominated splitting also possess closed proper convex cones. The same spirit of these results can be found in \cite{Lian}, when the author also studied a connection between dominated splitting and the existence of an invariant cone of a random compact linear operator, but in the non-uniformity sense. 

\begin{proposition}\label{firstproposition}
Let $T\in \cK_{\infty}(\Omega;\mathcal H)$ be arbitrary. Suppose that the Oseledets-Ruelle of the first Lyapunov exponent of $T$ is simple, i.e., $\dim \cO^1_{\omega}=1$. Suppose further that the decomposition $\cH= \mathcal O^1_{\omega}(T)\oplus F_{\omega}^1(T)$ corresponding to the first Lyapunov exponent of $T$ is dominated. Let $e(\omega)$ be a random unit vector of the subspace $\cO^1(\omega)$.    For each $\omega\in\Omega$, let \begin{equation}\label{Invariantcone_FirstLE}
    \cC_{\omega}:=\left\{ u+\gamma e(\omega): u\in F_{\omega}^1(T)\quad  \hbox{ with }\quad  \|u\|\leq \frac{\gamma}{2} \right\}.
    \end{equation}
Then, $(\mathcal C_{\omega})_{\omega\in\Omega}$ is a family of closed proper convex cones  $(\mathcal C_{\omega})_{\omega\in \Omega}$  satisfying condition (C) with $e(\omega)$ as an interior point. Furthermore, there exists a prime number $N$ such that $T^N_{\omega}\mathcal C_{\omega}\subset \mathcal C_{\theta^N \omega}\cup (-\mathcal C_{\theta^N\omega})$ and there exists $R<\infty$ such that for all $v\in  \mathcal C_{\omega}\setminus\{0\}$ with $\|v\|=1$
\begin{equation}\label{Interior_Aim}
\beta_{\mathcal C_{\theta^N\omega}}(T^N_{\omega}v, e(\theta^N\omega))\leq R\qquad\hbox{for } \omega\in \Omega.
\end{equation}

\end{proposition}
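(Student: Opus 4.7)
The plan is to verify the three conclusions in sequence. First I will transfer the dominated splitting hypothesis into the angle separation required by Example \ref{ConeExample}(iii). Applying Claim (i) of Remark \ref{Angleseparation_Remark} with $k=1$ to both $v$ and $-v$ (each lying in $\cO^1_\omega(T)\setminus\{0\}$) produces $\eta\in(0,1)$ such that $|\langle v,u\rangle|/(\|v\|\|u\|)\leq 1-\eta$ for all $v\in\cO^1_\omega(T)\setminus\{0\}$ and $u\in F^1_\omega(T)\setminus\{0\}$; this is precisely \eqref{AngleSeparation} with $1-\eta$ in place of $\eta$. Invoking Example \ref{ConeExample}(iii) then yields that $(\cC_\omega)_{\omega\in\Omega}$ is a family of closed proper convex cones satisfying condition (C) with $e(\omega)$ as a unit interior point.

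Next I will pick any prime $N$ large enough that $Ke^{\alpha N}\geq 2$, where $K,\alpha>0$ are the constants from (d2); such an $N$ exists since the primes are unbounded. Because $\cO^1_\omega(T)$ is one-dimensional and equivariant, there is a scalar $\beta_N(\omega)$ with $T^N_\omega e(\omega)=\beta_N(\omega)e(\theta^N\omega)$, and (d1) gives $|\beta_N(\omega)|\geq\delta^N$ for some $\delta>0$. For $v=u+\gamma e(\omega)\in\cC_\omega$ with $\gamma\geq 0$ and $\|u\|\leq\gamma/2$, equivariance of $F^1_\omega(T)$ yields $T^N_\omega v=\tilde u+\gamma\beta_N(\omega)e(\theta^N\omega)$ with $\tilde u:=T^N_\omega u\in F^1_{\theta^N\omega}(T)$, and (d2) gives
\[
\|\tilde u\|\leq \frac{\|u\|\,\|T^N_\omega e(\omega)\|}{Ke^{\alpha N}}\leq \frac{|\gamma\beta_N(\omega)|}{4}.
\]
Hence $T^N_\omega v\in\cC_{\theta^N\omega}$ when $\beta_N(\omega)\geq 0$ and $T^N_\omega v\in -\cC_{\theta^N\omega}$ otherwise, establishing the cone invariance under $T^N$.

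For the Hilbert metric bound \eqref{Interior_Aim}, observe that $\|v\|=1$ together with $\|u\|\leq\gamma/2$ forces $\gamma\geq 2/3$ via $1=\|u+\gamma e(\omega)\|\leq\|u\|+\gamma\leq 3\gamma/2$. Writing $|T^N_\omega v|=\hat u+\tilde\gamma e(\theta^N\omega)\in\cC_{\theta^N\omega}$ with $\tilde\gamma:=|\gamma\beta_N(\omega)|\geq 2\delta^N/3$ and $\|\hat u\|\leq\tilde\gamma/4$, the inclusion $t|T^N_\omega v|-e(\theta^N\omega)\in\cC_{\theta^N\omega}$ reduces to the two scalar conditions $t\tilde\gamma-1\geq 0$ and $t\|\hat u\|\leq(t\tilde\gamma-1)/2$, both of which hold as soon as $t\tilde\gamma\geq 2$. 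Taking $t=2/\tilde\gamma$ yields $\beta_{\cC_{\theta^N\omega}}(T^N_\omega v,e(\theta^N\omega))\leq 3\delta^{-N}$, so $R:=3\delta^{-N}$ does the job. I expect no serious obstacle; the only delicate point is calibrating $N$ so that $1/(Ke^{\alpha N})\leq 1/2$, after which invariance and the Hilbert-metric bound both reduce to elementary bookkeeping together with the lower bounds $\gamma\geq 2/3$ and $|\beta_N(\omega)|\geq\delta^N$.
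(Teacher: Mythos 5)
Your proof is correct and follows essentially the same route as the paper's: extract $K,\alpha,\delta$ from the dominated splitting, pick a prime $N$ with $Ke^{\alpha N}\geq 2$, verify condition (C) via Example \ref{ConeExample}(iii), establish cone invariance from the equivariant decomposition $T^N_\omega v=T^N_\omega u+\gamma\beta_N(\omega)e(\theta^N\omega)$, and bound $\beta_{\cC_{\theta^N\omega}}$ using $\gamma\geq 2/3$ and $|\beta_N(\omega)|\geq\delta^N$. Two remarks. First, you explicitly deduce the uniform angle separation \eqref{AngleSeparation} from (d1)--(d2) via Claim (i) of Remark \ref{Angleseparation_Remark}; the paper's own proof merely cites Example \ref{ConeExample}(iii) without verifying its hypothesis, so your version fills a small gap. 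Second, your bound $R=3\delta^{-N}$ is the correct one: the condition $t\,|T^N_\omega v|-e(\theta^N\omega)\in\cC_{\theta^N\omega}$ reduces, as you note, to $t|\gamma\beta_N(\omega)|\geq 2$, and since $|\gamma\beta_N(\omega)|\geq\tfrac{2}{3}\delta^N$ this requires $t\geq 3\delta^{-N}$. The paper writes $R:=\delta^N$, but its own subsequent inequality chain only yields $\tfrac{|R\gamma\beta(N,\omega)|}{4}-\tfrac12\geq 0$ if $R|\gamma\beta(N,\omega)|\geq 2$, which $R=\delta^N$ does not ensure when $\delta<1$; this is evidently a typo in the paper, and your value is the intended one.
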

\begin{proof}
By Definition \ref{Dominatedsplliting}, there exist $K,\alpha,\delta>0$ such that 
\begin{equation}\label{Separationzero}
\|T(\omega)e(\omega)\|\geq \delta 
\end{equation}
and 
\begin{equation}\label{DominatedDirection_Eq1}
\|T^n_{\omega}e(\omega)\|\geq   Ke^{\alpha n}  \frac{\|T_{\omega}^n u\|}{\|u\|} \quad \hbox{ for all } n\in\N, u\in F^1_{\omega}(T).
\end{equation}
Thanks to Example \eqref{ConeExample} (iii), the family of closed proper convex cones $(\cC_{\omega})_{\omega\in\Omega}$ defined as in \eqref{Invariantcone_FirstLE} satisfies the condition (C). Choose and fix a prime number $N$ such that $Ke^{\alpha N}\geq 2 $. We now verify that $T^N_{\omega}\mathcal C_{\omega}\subset \mathcal C_{\theta^N \omega}\cup (-\mathcal C_{\theta^N\omega})$. To see this,  let $v\in \cC_{\omega}$ with $\|v\|=1$. We write $v=u+\gamma e(\omega)\in\cC_{\omega}$ with $u\in F^1_{\omega}(T)$  and $\gamma\in\R$. By invariance of the random subspace $\mathcal O^1_{\omega}(T)$, we can present 
\begin{equation}\label{Invariance_Application_01}
T^n_{\omega}(u+\gamma e(\omega))
=T^n_{\omega} u + \gamma \beta(n,\omega) e(\theta^n\omega),
\end{equation}
where $\beta(n,\omega)$ satisfies $|\beta(n,\omega)|=\|T^n_{\omega}e(\omega)\|$. By \eqref{DominatedDirection_Eq1} we have 
\begin{equation}\label{EstimateCone_01}
\|T^N_{\omega} u\| \leq \frac{\|u\|}{Ke^{\alpha N}}\|T^N_{\omega}e(\omega)\|
\leq 
\frac{\gamma}{2Ke^{\alpha N}}|\beta(n,\omega)|\leq \frac{|\gamma \beta(n,\omega)|}{4},
\end{equation}
which together with \eqref{Invariantcone_FirstLE} implies that $T^N_{\omega}\cC_{\omega} \subset \mathcal C_{\theta^N \omega}\cup (-\mathcal C_{\theta^N\omega})$. To conclude the proof, let $R:=\delta^N$, where $\delta$ is given to satisfy \eqref{Separationzero},  and we verify \eqref{Interior_Aim}. To see this, since  $\|v\|=1$ and $\|u\|\leq \frac{\gamma}{2}$ it follows that $\gamma\geq \frac{2}{3}$. This, together with \eqref{Separationzero}, implies that 
$|\gamma \beta(N,\omega)|=|\gamma|\|T^N_{\omega}e(\omega)\|\geq \frac{2}{3}\delta^N$. Furthermore, by \eqref{Invariance_Application_01} we have 
\[
R T^N_{\omega}(u+\gamma e(\omega))-e(\theta^N\omega)
= -R T^N_{\omega}u+ (R\gamma\beta(N,\omega)-1)e(\theta^N\omega).
\]
Then,  we use \eqref{EstimateCone_01} to gain the following estimate
\begin{align}
\frac{|R\gamma\beta(N,\omega)-1|}{2} -\|R T^N_{\omega}u\|
& \geq 
\frac{|R\gamma\beta(N,\omega)-1|}{2} -\frac{|R\gamma\beta(N,\omega)|}{4}\notag\\[1ex]
& \geq 
\frac{|R\gamma\beta(N,\omega)|}{4}-\frac{1}{2}.\notag
\end{align}
Thus, $R T^N_{\omega}(u+\gamma e(\omega))-e(\theta^N\omega)
\in  \mathcal C_{\theta^N \omega}\cup (-\mathcal C_{\theta^N\omega})$ and therefore $\beta_{\mathcal C_{\theta^N\omega}}(T^N_{\omega}v, e(\theta^N\omega))\leq R$ and \eqref{Interior_Aim} is proved. The proof is complete.
\end{proof}
\subsection{Proof of Theorem B}
\begin{proof}[Proof of Theorem B] Let $T\in \mathcal D_k$ be arbitrary. Thus, the Oseledets-Ruelle subspaces corresponding to the first $k$ Lyapunov exponent of $T$ are simple, i.e.
\[
\dim \mathcal O^1_{\omega}(T)=\dots=\dim \mathcal O^k_{\omega}(T)=1.
\]
Using Theorem \ref{Connection}, we obtain that for $i=1,\dots,k+1$
\begin{equation}\label{Connection_Eq1}
\lambda_1(T)+\dots+\lambda_i(T)=\ell_i(T)=\kappa(\Lambda^iT)=\lim_{n\to\infty}\frac{1}{n}\log\|\Lambda^iT^n_{\omega}\|.
\end{equation}
By virtue of Theorem \ref{Integralseparation_Exteriorpower}, the Oseledets-Ruelle decompositions corresponding to the first Lyapunov exponents of $T,\Lambda T,\dots,\Lambda^k T$, respectively, are dominated. Hence, in light of Proposition \ref{firstproposition}, $T,\Lambda T,\dots,\Lambda^k T$ have closed proper convex cones satisfying condition (C). Using  Proposition \ref{GeneralizedKrein-Rutman}, the top Lyapunov exponent function $\kappa(\cdot)$ is analytic at $T, \Lambda T,\dots,\Lambda^k T$. Thus, there exists $\eps_1(T)>0$ such that for any  $S\in \mathcal K_{\infty}(\Omega;\mathcal H)$ with $\|S-T\|_{\infty}\leq \eps_1(T)$ we have 
\[
\kappa(\Lambda^{i+1}S)-\kappa(\Lambda^{i}S)<\kappa(\Lambda^{i}S)-\kappa(\Lambda^{i-1}S)\quad \hbox{ for } i=1,\dots,k.
\]
This together with Theorem \ref{Connection} implies that for any  $S\in \mathcal K_{\infty}(\Omega;\mathcal H)$ with $\|S-T\|_{\infty}\leq \eps_1(T)$ 
the Oseledets-Ruelle subspaces corresponding to the first $k$ Lyapunov exponent of $S$ are simple and 
\begin{equation}\label{Connection_Eq2}
\kappa(\Lambda^i S)=\lambda_1(S)+\dots+\lambda_i(S)\quad \hbox{ for } i=1,\dots,k.    
\end{equation}
Using  Proposition \ref{GeneralizedKrein-Rutman}, there exists $\eps(T)\in (0,\eps_1(T))$ such that for any  $S\in \mathcal K_{\infty}(\Omega;\mathcal H)$ with $\|S-T\|_{\infty}\leq \eps(T)$ the top Lyapunov exponent function $\kappa(\cdot)$ is analytic at $S, \Lambda S,\dots,\Lambda^k S$. This together with \eqref{Connection_Eq2} implies that for any  $S\in \mathcal K_{\infty}(\Omega;\mathcal H)$ with $\|S-T\|_{\infty}\leq \eps(T)$  the map $\lambda_i(\cdot)$ is analytic at $S$, where $i=1,\dots,k$. Hence, each element of  $\mathcal G_k:=\bigcup_{T\in\mathcal D_k} \{S\in \mathcal K_{\infty}(\Omega;\mathcal H): \|S-T\|<\eps(T)\}$ satisfies both properties (i) and (ii). Obviously, $\mathcal G_k$ is open and by density of $\mathcal D_k$ (see Theorem \ref{DensityIntegralSeparation_Hilbertspace}) the set $\mathcal G_k$ is also dense in $K_{\infty}(\Omega;\mathcal H)$. The proof is complete.
\end{proof}

\section{Appendix}\label{appendixA}
\subsection{Exterior power}\label{ExteriorPower}
For any $k\in\N$, let $\Lambda^k\cH$ denote the $k$-th exterior power of $\cH$. The space $\Lambda^k\cH$ can be identified with the set of formal expressions $\sum_{i=1}^m c_i (u_1^{(i)}\wedge\dots \wedge u_k^{(i)})$ with $m\in\N$, $c_i\in\R$ and $u_j^{(i)}\in\cH$ if we do computations with the following conventions:
\begin{itemize}
\item [1.] {\em Addition}:
\[
u_1\wedge\dots \wedge (u_j+\widehat u_j)\wedge\dots\wedge u_k =   u_1\wedge\dots \wedge u_j\wedge\dots\wedge u_k+u_1\wedge\dots \wedge \widehat u_j\wedge\dots\wedge u_k,
\]
\item [2.] {\em Scalar multiplication}: $u_1\wedge\dots \wedge c u_j\wedge\dots \wedge u_k=c u_1\wedge\dots\wedge u_j\wedge\dots\wedge u_k$,

\item [3.] for any permutation $\pi$ of $\{1,\dots,k\}$
\[
u_{\pi(1)}\wedge\dots\wedge u_{\pi (k)}
=\mbox{sign}(\pi) u_1\wedge\dots\wedge u_k.
\]
\end{itemize}
The inner product on $\cH$ induces an inner product $\langle\cdot,\cdot\rangle$ on $\Lambda^k{\cH}$ via
\[
\langle u_1\wedge\dots\wedge u_k, v_1\wedge \dots\wedge v_k
\rangle
:=\det (\langle u_i,v_j\rangle )_{k\times k}.
\]
Let $(e_n)_{n=1}^{\infty}$ be an orthonormal basic of $\mathcal H$. Then,  the set $(e_{i_1}\wedge\dots\wedge e_{i_k})_{i_1<\dots<i_k}$ is an orthonormal basic of $\Lambda^k\mathcal H$, see e.g. \cite{Temam}.

A random compact operator map $T:\Omega\rightarrow \cK(\cH)$ generates a random compact operator  $\Lambda^k T:\Omega\rightarrow \cK(\Lambda^k\cH)$ defined by
\[
\Lambda^kT(\omega)(u_1\wedge\dots\wedge u_k):= T(\omega)u_1\wedge\dots\wedge T(\omega)u_k.
\]
The following result provides a relation between the top Lyapunov exponents of random compact operators $(\Lambda^kT)_{k=1}^{\infty}$ and the Lyapunov exponents of the random compact operator $T$, see \cite[Theorem 1.3]{Blumenthal16}.
\begin{theorem}\label{Connection}
 Let $T\in \mathcal K_{\infty}(\Omega;\mathcal H)$. Then, there exists a measurable set $
	\widehat \Omega$ being of full probability and invariant under $\theta$ such that for any $q\in \N$ the limit 
	\[
	\ell_q(T):=\lim_{n\to\infty}\frac{1}{n}\log\|\Lambda^qT^n_{\omega}\|
	\]
exists and is constant over $\omega\in\widehat\Omega$. Define the sequence $(K_q(T))_{q\geq 1}$ by $K_1(T)=\ell_1(T)$ and $K_q(T)=\ell_q(T)-\ell_{q-1}(T)$ for $q>1$. Then, $K_1(T)\geq K_2(T)\geq \dots$ and the Lyapunov exponents $\lambda_i(T)$ are distinct values of the decreasing  sequence $(K_q(T))_{q\geq 1}$ with finite multiplicities $m_i(T)$. Furthermore, for any $i$ the multiplicity $m_i(T)$ is the dimension of the Oseledets-Ruelle subspace $\mathcal O_i(T)$ corresponding to the Lyapunov exponent $\lambda_i(T)$.
\end{theorem}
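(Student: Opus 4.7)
The plan is to relate the singular values of $\Lambda^q T^n_\omega$ to those of $T^n_\omega$, and then apply the multiplicative ergodic theorem (Theorem \ref{MET_HilbertSpace}) simultaneously to $T$ and to each exterior power $\Lambda^q T$. First I would note that for each $q\in\N$, the map $\omega\mapsto \Lambda^q T(\omega)$ defines an element of $\mathcal K_\infty(\Omega;\Lambda^q\mathcal H)$ because $\Lambda^q$ is a continuous multilinear construction and compactness is preserved under exterior powers. Hence Theorem \ref{MET_HilbertSpace} applies to $\Lambda^q T$, yielding a single $\theta$-invariant full-measure set $\widehat\Omega_q$ on which the limit $\Lambda_\omega(\Lambda^qT)=\lim_n((\Lambda^qT^n_\omega)^{\rT}\Lambda^q T^n_\omega)^{\frac{1}{2n}}$ exists and is compact. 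Intersecting $\widehat\Omega:=\bigcap_{q\geq 1}\widehat\Omega_q$ with the full-measure set on which MET applies to $T$ itself gives the desired full-measure $\theta$-invariant set.

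The crucial algebraic input is the identification of singular values. For any compact operator $A$ on a separable Hilbert space, let $\sigma_1(A)\geq \sigma_2(A)\geq \dots\geq 0$ denote its singular values. A standard computation (via the Courant--Fischer min-max principle for the self-adjoint compact operator $A^{\rT}A$ and its exterior power) yields the identity
\[
\sigma_j(\Lambda^q A)=\prod_{i\in I_j}\sigma_i(A),
\]
where $\{I_j\}_j$ is the enumeration in decreasing order of products $\prod_{i\in I}\sigma_i(A)$ over all $q$-element subsets $I\subset\N$. In particular,
\[
\|\Lambda^q A\|=\sigma_1(\Lambda^q A)=\sigma_1(A)\sigma_2(A)\cdots\sigma_q(A).
\]
Applying this with $A=T^n_\omega$ and taking $\frac{1}{n}\log$, one obtains
\[
\frac{1}{n}\log\|\Lambda^q T^n_\omega\|=\sum_{i=1}^q \frac{1}{n}\log\sigma_i(T^n_\omega).
\]
By Theorem \ref{MET_HilbertSpace} applied to $T$, the singular values of $T^n_\omega$ satisfy $\frac{1}{n}\log\sigma_i(T^n_\omega)\to \mu_i(T)$ for $\omega\in\widehat\Omega$, where $\mu_1(T)\geq \mu_2(T)\geq\dots$ is the Lyapunov spectrum counted with multiplicity, i.e.\ the value $\lambda_i(T)$ is repeated $m_i(T):=\dim\mathcal O^i_\omega(T)$ times and followed by $-\infty$ terms coming from $F^\infty_\omega(T)$. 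Substituting gives the existence of the limit
\[
\ell_q(T)=\lim_{n\to\infty}\frac{1}{n}\log\|\Lambda^qT^n_\omega\|=\sum_{i=1}^q\mu_i(T),
\]
which is constant on $\widehat\Omega$ by ergodicity.

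From this closed form the remaining conclusions follow formally. Indeed, $K_q(T)=\ell_q(T)-\ell_{q-1}(T)=\mu_q(T)$, so $K_1(T)\geq K_2(T)\geq\dots$; the distinct finite values of this sequence are precisely the distinct Lyapunov exponents $\lambda_i(T)$ from Theorem \ref{MET_HilbertSpace}; and a value $\lambda_i(T)$ appears in $(K_q(T))_{q\geq 1}$ exactly $m_i(T)=\dim\mathcal O^i_\omega(T)$ times because $\mu_q(T)=\lambda_i(T)$ precisely when $q$ indexes one of the repetitions of $\lambda_i(T)$ in the list with multiplicity.

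The main obstacle I anticipate is the precise justification of the singular-value identification $\sigma_1(\Lambda^q A)=\prod_{i=1}^q\sigma_i(A)$ for an infinite-dimensional compact operator $A$, including the statement that the top $q$ singular values of $T^n_\omega$ have well-defined asymptotic logarithmic growth rates equal to $\mu_1(T),\dots,\mu_q(T)$. Although this is classical in finite dimensions and goes back to Weyl's inequalities, in the Hilbert space setting one needs to argue, using Theorem \ref{MET_HilbertSpace}, that the nonzero eigenvalues of $\Lambda_\omega(T)$ together with their multiplicities genuinely capture the asymptotics of singular values of $T^n_\omega$; the $-\infty$ contributions from $F^\infty_\omega(T)$ (which do arise once $q$ exceeds $\sum_i m_i(T)$ in the case $s(T)<\infty$) must be handled by showing that they force $\ell_q(T)=-\infty$ beyond that threshold so that $K_q(T)=-\infty$ also and does not produce a new Lyapunov exponent.
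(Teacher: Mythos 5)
The paper does not actually prove this statement: Theorem \ref{Connection} is imported verbatim from \cite[Theorem 1.3]{Blumenthal16}, where the logic runs in the opposite direction to yours --- the quantities $\ell_q(T)$ are obtained first, from Kingman's subadditive ergodic theorem applied to the subadditive sequence $\log\|\Lambda^q T^n_\omega\|$, and the Oseledets--Ruelle data (exponents and the dimensions of the $\mathcal O^i_\omega$) are then constructed out of the volume growth rates $\ell_q$. Your proposal instead takes Theorem \ref{MET_HilbertSpace} as the input and tries to read off $\ell_q(T)=\sum_{i\leq q}\mu_i(T)$ from the singular-value identity $\|\Lambda^q A\|=\sigma_1(A)\cdots\sigma_q(A)$, which is indeed classical and survives in the compact-operator setting (via $(\Lambda^qA)^{\rT}\Lambda^qA=\Lambda^q(A^{\rT}A)$), and the singular-value asymptotics $\sigma_i(T^n_\omega)^{1/n}\to$ ($i$-th eigenvalue of $\Lambda_\omega(T)$, with multiplicity) do follow from norm convergence in Theorem \ref{MET_HilbertSpace} together with $|\sigma_i(A)-\sigma_i(B)|\leq\|A-B\|$. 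For the mere existence and constancy of $\ell_q$ your detour through an MET for $\Lambda^qT$ is also heavier than necessary; Kingman suffices.

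The genuine gap is in the multiplicities. You declare that $\frac{1}{n}\log\sigma_i(T^n_\omega)\to\mu_i(T)$ where $\lambda_i(T)$ is repeated $m_i(T)=\dim\mathcal O^i_\omega(T)$ times. What the singular-value argument actually yields is repetition according to the multiplicity of $e^{\lambda_i(T)}$ as an eigenvalue of the limit operator $\Lambda_\omega(T)$. The identification of that spectral multiplicity with $\dim\mathcal O^i_\omega(T)$ is precisely the final assertion of Theorem \ref{Connection}, and it is not contained in Theorem \ref{MET_HilbertSpace} as stated in the paper (which only lists the eigenvalues of $\Lambda_\omega(T)$ and, separately, exhibits finite-dimensional equivariant subspaces with the prescribed growth rates, without relating their dimensions to the eigenvalue multiplicities). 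As written, your argument therefore assumes the very multiplicity statement it is supposed to prove. To close the gap you must either prove that $\dim\mathcal O^i_\omega(T)$ equals the multiplicity of $e^{\lambda_i(T)}$ in $\Lambda_\omega(T)$ (essentially redoing the relevant part of Ruelle's construction in \cite{Ruelle_1982}, where $\mathcal O^i_\omega$ is built from the spectral projections of $\Lambda_\omega(T)$), or adopt the route of \cite{Blumenthal16} and extract the multiplicities from the volume growth rates themselves. The remaining bookkeeping in your sketch (the formula $K_q=\mu_q$, monotonicity, the $-\infty$ regime when $q$ exceeds the total multiplicity) is fine once that identification is in place.
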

\subsection{Measurable selection of random subspaces}
We collect in this subsection some technical results concerning random compact operators. For this purpose, we recall the following measurable selection theorem, see \cite[Theorem 4.2]{Wagner1977}.
\begin{theorem}[Measurable selection theorem]\label{Measurableselectiontheorem}
Let $B(\cH)$ denote the set of closed subsets of $\cH$. Consider a map $A:\Omega\rightarrow B(\cH)$. Then, the following statements are equivalent:
\begin{itemize}
    \item [(i)] $A$ is a random closed set, i.e. for all $x\in\cH$ the map $\omega\mapsto d(x,A(\omega))$\footnote{The distance $d(x,E):=\inf_{u\in E}\|x-u\|$ for a closed subset $E$.} is measurable.
    \item [(ii)] For all open sets $\cO\in\cH$, the set $\{\omega\in\Omega: A(\omega)\cap \cO\not=\emptyset\}$ is measurable.
    \item [(iii)] There exists a sequence $(a_n)_{n\in \N}$ if measurable maps $a_n:\Omega\rightarrow \cH$ such that $A(\omega)=\mbox{Closure}\{a_n(\omega): n\in\N\}$ for all $\omega\in\Omega$.
\end{itemize}

\end{theorem}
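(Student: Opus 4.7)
The plan is to establish the cyclic chain \textup{(i)}$\Leftrightarrow$\textup{(ii)}, \textup{(iii)}$\Rightarrow$\textup{(i)}, and then the substantive implication \textup{(ii)}$\Rightarrow$\textup{(iii)}. Throughout I will rely on separability of $\cH$: fix once and for all a countable dense subset $\{x_k\}_{k\in\N}\subset\cH$, so that every open set in $\cH$ is a countable union of open balls $B(x_k,q)$ with positive rational radius $q$.

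For \textup{(i)}$\Leftrightarrow$\textup{(ii)} the elementary identity
\[
\{\omega:A(\omega)\cap B(x,r)\neq\emptyset\}=\{\omega:d(x,A(\omega))<r\}
\]
reduces the matter to a standard back-and-forth: starting from \textup{(i)} I measure hitting of each basic ball and hence of each open set by countable union, while starting from \textup{(ii)} I read off $\{\omega:d(x,A(\omega))<r\}$ directly for each rational $r>0$. The implication \textup{(iii)}$\Rightarrow$\textup{(i)} is an immediate application of
\[
d(x,A(\omega))=\inf_{n\in\N}\|x-a_n(\omega)\|,
\]
a countable infimum of measurable functions.

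The heart of the argument is \textup{(ii)}$\Rightarrow$\textup{(iii)}, which I would treat as the classical Kuratowski--Ryll-Nardzewski selection. My plan is, for each $k\in\N$, to construct a measurable map $b_k:\Omega\rightarrow\cH$ with $b_k(\omega)\in A(\omega)$ and $\|b_k(\omega)-x_k\|\leq 2\,d(x_k,A(\omega))$ on the measurable set where $A(\omega)\neq\emptyset$. Density of $\{b_k(\omega)\}_{k\in\N}$ in $A(\omega)$ then follows automatically: any $y\in A(\omega)$ can be $\eps/4$-approximated by some $x_k$, which forces $d(x_k,A(\omega))<\eps/4$ and hence $\|b_k(\omega)-y\|<\eps$. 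Each $b_k$ will be built by iterative refinement: setting $b_k^{(0)}(\omega):=x_k$ and, having defined $b_k^{(m)}$ measurably, I partition $\Omega$ into countably many measurable pieces by letting the piece indexed by $j$ consist of those $\omega$ for which $x_j$ is the first element of the dense family satisfying both a proximity constraint to $b_k^{(m)}(\omega)$ and $B(x_j,2^{-m-1})\cap A(\omega)\neq\emptyset$. Each such piece is measurable by \textup{(ii)}, and I set $b_k^{(m+1)}(\omega):=x_j$ on it.

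The main obstacle will be the bookkeeping in this iterative step: the radii and proximity constraints must be tuned so that $(b_k^{(m)}(\omega))_m$ is Cauchy with a pre-specified modulus of convergence, so that the successive partitions remain measurable at every scale, and so that the running distance bound relative to $x_k$ is preserved through the limit. Once these are arranged, closedness of $A(\omega)$ forces $b_k(\omega):=\lim_m b_k^{(m)}(\omega)\in A(\omega)$, measurability of $b_k$ follows from the countable partition structure at each stage, and the prescribed distance bound is inherited. The driving mechanism throughout is the interplay of separability of $\cH$ with condition \textup{(ii)}, which is precisely what permits the measurable partition of $\Omega$ at every dyadic scale.
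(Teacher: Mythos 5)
The paper does not prove this theorem; it is quoted from Wagner's survey, \cite[Theorem 4.2]{Wagner1977}, and then used as a black box in the proof of Proposition \ref{Completeness}. So there is no in-paper proof to compare against. Your plan is the standard Castaing/Kuratowski--Ryll-Nardzewski route, and the three easy implications, via the identity $\{\omega:A(\omega)\cap B(x,r)\neq\emptyset\}=\{\omega:d(x,A(\omega))<r\}$, separability of $\cH$, and $d(x,A(\omega))=\inf_n\|x-a_n(\omega)\|$, are correct as written.

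One concrete point to repair in (ii)$\Rightarrow$(iii): the fixed dyadic radius $2^{-m-1}$ you write at the $m$-th refinement step cannot deliver the bound $\|b_k(\omega)-x_k\|\leq 2\,d(x_k,A(\omega))$ on which your density argument rests. With $\omega$-independent radii the cumulative displacement from $x_k$ is bounded only by a universal constant, and it will overshoot $2\,d(x_k,A(\omega))$ whenever $A(\omega)$ passes close to $x_k$; if $A(\omega)$ is far from $x_k$ the iteration may not even reach $A(\omega)$. The radii and proximity constraints must be scaled by $\delta_k(\omega):=d(x_k,A(\omega))$, for instance requiring $\|x_j-b_k^{(m)}(\omega)\|<(1+\eps)\,2^{-m}\delta_k(\omega)$ and $d(x_j,A(\omega))<(1+\eps)\,2^{-m-1}\delta_k(\omega)$. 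Since $\omega\mapsto\delta_k(\omega)$ is measurable by the already-proved implication (ii)$\Rightarrow$(i), the partition pieces at each stage remain measurable, the sequence is Cauchy with modulus controlled by $\delta_k(\omega)$, and the limit lands in $A(\omega)$ with $\|b_k(\omega)-x_k\|\leq 2(1+\eps)\,\delta_k(\omega)$, which suffices for density after adjusting your constants. You do flag that the radii ``must be tuned,'' but the essential point to make explicit is that the tuning is $\omega$-dependent. (An alternative that sidesteps the per-$k$ distance bound is the doubly-indexed Castaing device: for each $(k,m)$ build a measurable selection of $A(\omega)\cap\overline{B(x_k,2^{-m})}$ on $\{\omega:A(\omega)\cap B(x_k,2^{-m})\neq\emptyset\}$, and of $A(\omega)$ elsewhere; density is then immediate.) Finally, note that (iii) as stated forces $A(\omega)\neq\emptyset$ for every $\omega$, so that implicit hypothesis should accompany the whole equivalence.
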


\begin{proposition}\label{Completeness}
Let $T\in\cK_{\infty}(\Omega,\cH)$. Then the following statements hold:
\begin{itemize}
    \item [(i)] There exists a unit random vector $f:\Omega\rightarrow \cH$ such that $\|T(\omega)f(\omega)\|=\|T(\omega)\|$ and  $\langle T(\omega) f(\omega), T(\omega) u \rangle=0$ for $u\in f(\omega)^{\perp}$.
    \item [(ii)] For any $\eps>0$ there exists $S \in \mathcal K_{\infty}(\Omega;\mathcal H)$ such that $\|T-S\|_{\infty}\leq \eps$ and $S(\omega)$ is a finite-rank linear operator for $\omega\in\Omega $.
    \item [(iii)] For any $\eps>0$ there exists $R\in \mathcal K_{\infty}(\Omega;\mathcal H)$ such that $\|T-R\|_{\infty}\leq \eps$ and $R(\omega)$ is bijective for $\omega\in\Omega$
\end{itemize}
\end{proposition}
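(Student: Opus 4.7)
The plan is to address the three parts in order, with the measurable selection theorem (Theorem \ref{Measurableselectiontheorem}) as the main tool throughout.

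For (i), I would apply measurable selection to the set-valued map $A(\omega) := \{v \in \cH : \|v\| = 1,\ \|T(\omega)v\| = \|T(\omega)\|\}$. Since $T(\omega)$ is compact, $T(\omega)^{\rT}T(\omega)$ is compact self-adjoint with top eigenvalue $\|T(\omega)\|^2$ attained on a finite-dimensional eigenspace, and $A(\omega)$ is exactly the unit sphere of that eigenspace, hence nonempty and closed. To check that $A$ is a random closed set, first note that $\omega \mapsto \|T(\omega)\|$ is measurable as a supremum over a countable dense subset of the unit ball of $\cH$ (by strong measurability of $T$), then verify criterion (ii) of Theorem \ref{Measurableselectiontheorem} using measurability of the maps $\omega \mapsto \|T(\omega)v\|$ for each fixed $v$. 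The orthogonality claim is automatic: from $T(\omega)^{\rT}T(\omega)f(\omega) = \|T(\omega)\|^2 f(\omega)$ one gets $\langle T(\omega)f(\omega), T(\omega)u\rangle = \|T(\omega)\|^2\langle f(\omega),u\rangle = 0$ whenever $u \perp f(\omega)$.

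For (ii), iterate (i) to build a measurable singular value decomposition. Apply (i) to $T$ to obtain $f_1$, $\sigma_1 := \|Tf_1\|$, and $g_1 := Tf_1/\sigma_1$; then apply (i) to the strongly measurable operator $\omega \mapsto T(\omega)(\id - \langle f_1(\omega),\cdot\rangle f_1(\omega))$ to obtain $f_2,\sigma_2,g_2$, and so on. This yields measurable sequences $(\sigma_n(\omega))_{n\geq 1}$ with $\sigma_1 \geq \sigma_2 \geq \cdots \to 0$ fiberwise (by compactness of $T(\omega)$), together with measurable orthonormal systems $(f_n(\omega))$, $(g_n(\omega))$. Defining the measurable stopping rank $n(\omega) := \min\{n \geq 0 : \sigma_{n+1}(\omega) \leq \eps\}$ and setting
\[
S(\omega)v := \sum_{i=1}^{n(\omega)} \sigma_i(\omega)\,\langle f_i(\omega), v\rangle\, g_i(\omega),
\]
each $S(\omega)$ is finite-rank with $\|T(\omega)-S(\omega)\| = \sigma_{n(\omega)+1}(\omega) \leq \eps$; strong measurability of $S$ follows by partitioning $\Omega$ along the level sets $\{n(\omega)=N\}$ and working on each piece with the finite-rank formula.

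For (iii), since a compact operator on the infinite-dimensional $\cH$ cannot be bijective in the bounded-inverse sense, I read the conclusion (consistently with its use in the proof of Proposition \ref{TechnicalProposition_1}) as producing $R(\omega)$ with trivial kernel, so that the inverse image notation $(R^n_\omega)^{-1}(\cdot)$ is defined on the range. Use the measurable SVD from (ii) to identify $\ker T(\omega)$ and $\overline{\mathrm{range}\,T(\omega)}^\perp$ as random closed subspaces, then apply Theorem \ref{Measurableselectiontheorem} to select measurable orthonormal bases $(k_n(\omega))$ of the former and $(h_n(\omega))$ of the latter (enumerated countably, with the convention that finite-dimensional fibers pad out with zero vectors). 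Defining
\[
R(\omega)v := T(\omega)v + \sum_{n \geq 1} \eps\, 2^{-n}\, \langle k_n(\omega), v\rangle\, h_n(\omega),
\]
the added summand is a diagonal compact operator of norm at most $\eps$, supported on $\ker T(\omega)$ with image orthogonal to $\overline{\mathrm{range}\,T(\omega)}$; thus $R(\omega)$ is compact, has trivial kernel, and satisfies $\|R-T\|_\infty \leq \eps$. The main obstacle throughout is measurability: at each stage one must verify that the fiberwise subspaces (top eigenspaces of $T^{\rT}T$, kernels, and range-complements) give rise to random closed sets in the sense of Theorem \ref{Measurableselectiontheorem}, and that the $\omega$-dependent ranks (the stopping index $n(\omega)$ in (ii) and the dimension of $\ker T(\omega)$ in (iii)) are handled by partitioning $\Omega$ into the corresponding measurable level sets before assembling the global strongly measurable maps.
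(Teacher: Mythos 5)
Parts (i) and (ii) of your plan are essentially the paper's own proof: the maximizer set is the unit sphere of the top eigenspace of $L(\omega)=T(\omega)^{*}T(\omega)$, one selects from it measurably via Theorem \ref{Measurableselectiontheorem}, and iterating the selection produces a measurable (truncated) singular value decomposition with a measurable stopping rank, exactly as in the paper's inductive construction of $(f_i(\omega))$ and $k(\omega)$. The one step you under-specify is the crux of (i): checking criterion (ii) of Theorem \ref{Measurableselectiontheorem} for the exact attainment set $A(\omega)=\{v:\|v\|=1,\ \|T(\omega)v\|=\|T(\omega)\|\}$ cannot be done just "using measurability of $\omega\mapsto\|T(\omega)v\|$ for each fixed $v$", because whether $A(\omega)$ meets an open set $\cO$ is not detectable on a countable dense subset of $\cO$: a dense test vector can come arbitrarily close to attaining the norm without attaining it, and attainment is an equality, not an open condition. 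The paper's proof spends most of its length on precisely this point, replacing $A(\omega)$ by the open approximations $V_n(\omega)=\{v:\|v\|=1,\ \|L(\omega)v\|>(1-\tfrac1n)\|L(\omega)\|\}$ (where the countable-dense test does work) and proving $d(x,V_n(\omega))\to d(x,A(\omega))$ by a quantitative estimate using the spectral gap $\|L(\omega)|_{U(\omega)^{\perp}}\|<\|L(\omega)\|$. Some such device is needed to complete your step; as written, that part of your plan would not go through.

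For (iii) you take a genuinely different route, and you are right that literal bijectivity is impossible for a compact operator on infinite-dimensional $\cH$. The paper reduces to finite rank via (ii) and then completes the Schmidt decomposition, keeping full orthonormal bases $(x_n(\omega))$, $(y_n(\omega))$ and inserting small nonzero coefficients $\alpha_{k+1},\alpha_{k+2},\dots$ with $\sum_{j>k}\alpha_j^2<\eps^2$; the resulting $R(\omega)$ is injective \emph{and has dense range}. Your perturbation, which carries a measurable orthonormal basis of $\ker T(\omega)$ into $(\overline{\mathrm{range}\,T(\omega)})^{\perp}$ with weights $\eps 2^{-n}$, gives trivial kernel but not dense range: for the weighted shift $T e_n=\tfrac1n e_{n+1}$ your construction returns $R=T$, whose range stays orthogonal to $e_1$. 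Since Proposition \ref{Completeness}(iii) is invoked in the proof of Proposition \ref{TechnicalProposition_1} precisely to take preimages of prescribed unit vectors (see \eqref{Rotate}), "bijective" there is standing in for "as close to invertible as a compact operator can be", and the paper's output (injective with dense range) is strictly stronger than yours; your version suffices for the use in Lemma \ref{Perturbationalongashortperiod} (where only $T^N_{\omega}e(\omega)\neq 0$ is needed) but not for \eqref{Rotate}. If you keep your construction, you should additionally perturb on $(\ker T(\omega))^{\perp}$, e.g.\ reproduce the paper's completed Schmidt decomposition measurably, so as to also recover dense range.
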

\begin{proof}
(i) For each $\omega\in\Omega$, let $L(\omega)=T(\omega)^* T(\omega)$ where $T(\omega)^*$ denotes the adjoint operator of $T(\omega)$. Then, $L(\omega)$ is a self-adjoint compact operator. Let
\begin{equation}\label{Firsteigensubspaces}
U(\omega):=\Big\{v\in\cH:L(\omega)v  =\|L(\omega)\|v\Big\}.
\end{equation}
Thus, $U(\omega)$ is a non-trivial linear subspace of $\cH$ of finite dimension. From $\|L(\omega)\|=\|T(\omega)\|^2$ we derive that $\|T(\omega)v\|^2= \langle L(\omega)v,v\rangle\|=\|T(\omega)\|^2$ for all $v\in U(\omega)$ with $\|v\|=1$. Furthermore, for any $v\in U(\omega)$ and $u\in v^{\perp}$ we have 
\[
\langle T(\omega) v, T(\omega) u\rangle
=
\langle L(\omega) v,  u\rangle=0.
\]
Hence, to conclude the proof of this part it is sufficient to show that the set-valued map $\omega\mapsto V(\omega)$, where  $V(\omega):=U(\omega)\cap \{v\in\cH:\|v\|=1\}$, satisfies (i) of Theorem \ref{Measurableselectiontheorem}, i.e. 
\begin{equation}\label{Aim1}
\omega\mapsto d(x,V(\omega))\quad \hbox{ is measurable for any } x\in\cH.   
\end{equation}
To do this, choose and fix $x\in\cH$. Using the Shur Theorem for self-adjoint compact operator $L(\omega)$, see e.g. \cite[Theorem VII.4]{Retherford}, we have 
\begin{equation}\label{Measurability_Eq01}
V(\omega)=\Big\{v\in\cH: \|v\|=1 \hbox{ and } \|L(\omega) v\|=\|L(\omega)\|\Big\}.    
\end{equation}
For each $n\in\N$ with $n\geq 2$ we define 
\begin{equation}\label{Approximatesubspaces}
V_n(\omega):=\left\{v\in\cH: \|v\|=1 \hbox{ and } \|L(\omega)v\|>\left(1-\frac{1}{n}\right) \|L(\omega)\|\right\}.
\end{equation}
We now show that 
\begin{equation}\label{Approximatedsequence}
d(x,V(\omega))=\lim_{n\to\infty} d(x,V_n(\omega))\qquad \hbox{ for } \omega\in\Omega.
\end{equation}
Pick an arbitrary $\omega\in\Omega$. Obviously, $(V_n(\omega))_{n\geq 2}$ is a decreasing sequence of sets and $\bigcap_{n\geq 2} V_n(\omega)=V(\omega)$, then $(d(x,V_n(\omega)))_{n\geq 2}$ is an increasing sequence and $d(x,V_n(\omega))\leq d(x,V(\omega))$ for all $n\geq 2$. Consequently, $\lim_{n\to\infty} d(x,V_n(\omega))\leq V(x,V(\omega))$. For the inverse inequality, by definition of $U(\omega)$ as in \eqref{Firsteigensubspaces}
\begin{equation}\label{Gamma_Constant}
\gamma:=\|L(\omega)|_{U(\omega)^{\perp}}\|<\|L(\omega)\|.
\end{equation}
For any $n\geq 2$ and $v\in V_n(\omega)$, we can write $v=u+u^{\perp}$ for $u\in U(\omega)$ and $u^{\perp}\in U(\omega)^{\perp}$. Then,
\begin{align}
\|x-v\|
& \geq \|x-\frac{u}{\|u\|}\|-\|u-\frac{u}{\|u\|}\| -\|u^{\perp}\|
\notag\\
& \geq \mbox{dist}(x,U(\omega))-(1-\|u\|+\|u^{\perp}\|,\notag
\end{align}
which together with $\|u\|+\|u^{\perp}\|\geq \|u+u^{\perp}\|\geq 1$ implies that 
\begin{equation}\label{Firstdirection}
 \|x-v\|
 \geq 
 \mbox{dist}(x,U(\omega))-2\|u^{\perp}\|.
\end{equation}
To estimate $\|u^{\perp}\|$, from $\|v\|=1$ we derive that $\|u\|^2+ \|u^{\perp}\|^2=1$. Since $v\in V_n(\omega)$ and $u\in U(\omega)$ it follows that 
\begin{align}
\left(1-\frac{1}{n}\right)\|L(\omega)\| 
& \leq 
\|L(\omega)v\|\notag\\
& \leq \|L(\omega)\| \|u\| +\|L(\omega) u^{\perp}\|\notag\\
&\leq 
\|L(\omega)\| \|u\| +\gamma \|u^{\perp}\|,\notag
\end{align}
which implies that $\|u\|+\frac{\gamma}{\|L(\omega)\|}\|u^{\perp}\|\geq 1-\frac{1}{n}$. Hence,
\[
\|u\|^2+ \frac{\gamma^2}{\|L(\omega)\|^2}\|u^{\perp}\|^2\geq \left(1-\frac{1}{n}\right)^2\geq 1-\frac{2}{n}.
\]
Then, from $\|u\|^2+ \|u^{\perp}\|^2=1$ we derive that  $\|u^{\perp}\|^2\leq \frac{2}{n}\frac{\|L(\omega)\|^2}{\|L(\omega)\|^2-\gamma^2}$. Thus, by \eqref{Firstdirection} we arrive at 
\[
\|x-v\|\geq \mbox{dist}(x,V(\omega))
-\frac{2}{n}\frac{\|L(\omega)\|}{\|L(\omega)\|-\gamma}
\quad \hbox{ for all } v\in V_n(\omega).
\]
Consequently, $\mbox{dist}(x,V_n(\omega))\geq \mbox{dist}(x,V(\omega))
-\frac{2}{n}\frac{\|L(\omega)\|}{\|L(\omega)\|-\gamma}$ and therefore  $\mbox{dist}(x,V(\omega))=\lim_{n\to\infty}\mbox{dist}(x,V_n(\omega))$. To prove \eqref{Aim1}, it is sufficient to show that for an arbitrary but fixed $n\in\N$ the map $\omega\mapsto \mbox{dist}(x,V_n(\omega))$ is measurable. By using Theorem \ref{Measurableselectiontheorem}, we prove this fact by verifying that for an arbitrary open set 
$\cO$ of $\cH$, the set $V_{\cO;n}:=\{\omega\in\Omega: V_n(\omega)\cap \cO\not=\emptyset\}$ is measurable. Let $(f_i)_{i\geq 1}$ be a countable dense set of $\cO$. We can assume that $f_i\not=0$ for all $i$. By \eqref{Approximatesubspaces} and density of $(f_i) $ in $\mathcal O$ we have 
\begin{equation*}
V_{\cO;n}=\bigcap_{i\geq 1}\left\{
\omega\in\Omega: \frac{\|L(\omega)f_i\|}{\|f_i\|}>\left(1-\frac{1}{n}\right)\|L(\omega)\|
\right\},
\end{equation*}
which proves that $V_{\cO;n}$ is measurable, and the proof of this part is complete.

(ii) By virtue of (i), we can inductively construct a sequence of linear subspaces $(\cH_{j}(\omega))_{j=0}^{\infty}$ and a sequence of  random unit vectors $(f_i(\omega))_{i=1}^{\infty}$ such that for $i=1\dots$ we have 
\begin{align*}
\cH_{0}(\omega)&:=\cH \hbox{ and } \cH_i(\omega):=\{f_1(\omega),\dots, f_i(\omega)\}^{\perp},\\[1ex]
L(\omega)f_i(\omega)
&=
\|L(\omega)|_{\cH_{i-1}(\omega)}\|f_i(\omega).
\end{align*}
Furthermore, both linear subspaces $\mbox{Span}\{f_1(\omega),\dots, f_i(\omega)\}$ and $\cH_i(\omega)$ are invariant under $L(\omega)$. Let $\lambda_i(\omega):=\|L(\omega)|_{\cH_{i-1}(\omega)}\|$ for $i=1,\dots$. By measurability of  $(f_i(\omega))_{i=1}^{\infty}$ and separability of $\cH$ the map $\lambda_i$ is measurable for any $i$. Since $S(\omega)$ is a compact linear operator it follows that $\lim_{i\to\infty}\lambda_i(\omega)=0$. Then, for any $\eps>0$ there exists a random maps $k(\omega)\in \mathbb N$ such that $\|L(\omega)|_{\cH_{k(\omega)}(\omega)}\|<\eps^2$. Now, we define the linear operator $S(\omega)$ as 
\begin{equation}\label{FiniteRank}
S(\omega)v
:=
\left\{
\begin{array}{ll}
    T(\omega)v & \hbox{ for } v\in \mbox{Span}\{f_1(\omega),\dots, f_{k(\omega)}(\omega)\}, \\[1ex]
     0 & \hbox{ for } v\in \cH_{k(\omega)}(\omega).
\end{array}
\right.
\end{equation}
Then, $S(\omega)$ is a finite-rank linear operator. By \eqref{FiniteRank}, we have  $S(\omega)v=T(\omega)v$ for  $v\in \mbox{Span}\{f_1(\omega),\dots, f_{k(\omega)}(\omega)\}$. Thus,
\[
\|S(\omega)-T(\omega)\|^2=\sup_{v\in \cH_{k(\omega)}(\omega)}\frac{\|T(\omega)v\|^2}{\|v\|^2}=
\sup_{v\in \cH_{k(\omega)}(\omega)}\frac{\left\langle L(\omega)v,v\right\rangle}{\|v\|^2}\leq \eps^2,
\]
which completes the proof of this part.

(iii) By (ii), a random compact linear operator can be approximated by a sequence of random finite-rank operators. Then, it is sufficient to start with a random finite-rank operator $T\in\mathcal K(\mathcal H)$ and an arbitrary but fixed $\eps>0$. For each $\omega$, using the Schmidt Decomposition Theorem, see e.g. \cite[Theorem VIII. 8]{Retherford}, there are orthonormal basics $(x_n(\omega))_{n=1}^{\infty}$ and $(y_n(\omega))_{n=1}^{\infty}$ such that 
\[
T (\omega) f= \sum_{n=1}^{k}\alpha_n \langle f, x_n(\omega)\rangle y_n(\omega),\qquad f\in\mathcal H,
\] 
where $k\in\N$ and $\alpha_1,\dots,\alpha_k$ are non-zero real numbers. We now choose arbitrary non-zero real numbers $\alpha_{k+1},\alpha_{k+2},\dots$ such that $\sum_{j=k+1}^{\infty}\alpha_j^2<\eps^2$. Then, the operator defined by 
\[
R(\omega) f=\sum_{n=1}^{\infty}\alpha_n \langle f, x_n(\omega)\rangle y_n(\omega),\qquad f\in\mathcal H,
\] 
satisfies that $R$ is a bijective compact linear operator and $\|T(\omega)-R(\omega)\|<\eps$. The proof is complete.
\end{proof}
\subsection{Halmos-Rokhlin's lemma}
We recall the following tool from ergodic theory, which is derived as an application of Halmos-Rokhlin's lemma, see e.g. \cite{CFS_1982}.
\begin{theorem}\label{HR_Lemma}
Assume that $\theta$ is an ergodic invertible transformation of the non-atomic Lebesgue space $(\Omega,\mathcal F,\mathbb P)$. Then for any $U\in\cF$ with $\mP(U)>0$ and any $n > 0$ there exists a measurable set $V\subset U$ such that $\mP(V)>0$ and the sets $\theta^{-n}V,\dots,\theta^{-1}V,V,\theta V,\dots,\theta^n V$ are pairwise disjoint.
\end{theorem}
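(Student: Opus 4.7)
My plan is to deduce the stated result from the classical Halmos-Rokhlin tower lemma, which asserts: for any ergodic invertible measure-preserving $\theta$ on a non-atomic Lebesgue space, any integer $M \geq 1$, and any $\eps > 0$, there exists $B \in \cF$ such that $B, \theta B, \ldots, \theta^{M-1} B$ are pairwise disjoint and $\mP\bigl(\bigsqcup_{i=0}^{M-1} \theta^i B\bigr) \geq 1 - \eps$. I would first apply this classical version with $\eps < \mP(U)/2$ and $M$ chosen so large that $2n/M < \mP(U)/2$ (for instance $M = \lceil 4n/\mP(U) \rceil + 1$), producing such a base set $B$.

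Next, I split the level indices $\{0, 1, \ldots, M-1\}$ into the \emph{boundary} part $\{0, \ldots, n-1\} \cup \{M-n, \ldots, M-1\}$ (of cardinality $2n$) and the \emph{interior} part $\{n, n+1, \ldots, M-1-n\}$. Since the strata have common measure $\mP(B) \leq 1/M$, the boundary strata together have measure at most $2n/M < \mP(U)/2$. Consequently
\[
\mP\Bigl(U \cap \bigsqcup_{i=n}^{M-1-n} \theta^i B\Bigr) \;\geq\; \mP(U) - \eps - \frac{2n}{M} \;>\; 0,
\]
and by the pigeonhole principle applied to this disjoint union there exists some interior index $i_0 \in \{n, \ldots, M-1-n\}$ with $\mP(U \cap \theta^{i_0} B) > 0$. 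I then set $V := U \cap \theta^{i_0} B$, which by construction lies in $U$ and has positive probability.

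It then remains to verify that $\theta^{-n} V, \theta^{-n+1} V, \ldots, \theta^n V$ are pairwise disjoint. For each $j \in \{-n, \ldots, n\}$ one has $\theta^j V \subset \theta^{i_0 + j} B$, and because $i_0$ lies in the interior range, the shifted indices $i_0 + j$ all remain in $\{0, 1, \ldots, M-1\}$ and are distinct for distinct $j$. Hence the iterates $\theta^j V$ sit inside pairwise disjoint strata of the Halmos-Rokhlin tower and are themselves pairwise disjoint, as required. The main subtlety to negotiate is that $V$ must be a subset of $U$ rather than merely a translate of one: one cannot simply declare $V := \theta^n B$, which would automatically be centered in the tower but need not meet $U$ at all. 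The pigeonhole over the interior strata is precisely the device that reconciles the two requirements, by locating a single stratum that both intersects $U$ in positive measure and sits deep enough in the tower to leave room for $n$ forward and backward iterates in distinct levels.
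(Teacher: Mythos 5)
Your proposal is correct, and it follows exactly the route the paper intends: the paper gives no proof of this statement, merely recalling it as an application of the classical Halmos--Rokhlin tower lemma with a reference to Cornfeld--Fomin--Sinai, and your argument is precisely such an application. The key steps all check out: with $M\geq 4n/\mP(U)+1$ and $\eps<\mP(U)/2$ the interior strata $\theta^n B,\dots,\theta^{M-1-n}B$ meet $U$ in positive measure, the pigeonhole over these finitely many strata yields an $i_0$ with $\mP(U\cap\theta^{i_0}B)>0$, and the centering of $i_0$ in the tower guarantees that $\theta^{-n}V,\dots,\theta^nV$ lie in distinct, hence disjoint, tower levels; you also correctly identify the one subtlety, namely that $V$ must be carved out of $U$ rather than taken as a full stratum.
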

%

\end{document}